\newcommand{\thedocumentname}{Resolutions of proper Riemannian Lie Groupoids}
\newcommand{\theauthor}{H.\ Posthuma, X.\ Tang, and K.J.L.\ Wang}
\numberwithin{equation}{section}							%Currently have only equation* commands.
\theoremstyle{plain}
\newtheorem{thm}{Theorem}[section]
\newaliascnt{lem}{thm}
\newtheorem{lem}[lem]{Lemma}
\newaliascnt{prop}{thm}
\newtheorem{prop}[prop]{Proposition}
\newaliascnt{cor}{thm}
\newtheorem{cor}[cor]{Corollary}
\theoremstyle{definition}
\newaliascnt{defn}{thm}
\newtheorem{defn}[defn]{Definition}
\newaliascnt{rem}{thm}
\newtheorem{rem}[rem]{Remark}
\newaliascnt{exa}{thm}
\newtheorem{exa}[exa]{Example}
\newtheorem*{thm*}{Theorem}
\newcommand{\rar}{\rightarrow}
\newcommand{\drar}{\dashrightarrow}
\newcommand{\lar}{\leftarrow}
\newcommand{\rrar}{\rightrightarrows}
\newcommand{\bpm}{\begin{pmatrix}}
\newcommand{\epm}{\end{pmatrix}}
\newcommand{\G}{\mathcal{G}}
\newcommand{\h}{\mathcal{H}}
\newcommand{\codim}{\mbox{codim}}
\newcommand{\wt}{\widetilde}
\begin{document}
\author{H.\ Posthuma}
\address{Korteweg-de Vries Institute for Mathematics, University of Amsterdam, The Netherlands}
\email{H.B.Posthuma@uva.nl}
\author{X.\ Tang}
\address{Department of Mathematics, Washington University, St. Louis, USA}
\email{XTang@math.wustl.edu}
\author{K.J.L.\ Wang}
\address{Korteweg-de Vries Institute for Mathematics, University of Amsterdam, The Netherlands}
\email{K.J.L.Wang@uva.nl}
%
%\subjclass[2010]{53D18, 53D17, 53D05}
%\date{\today}
%\thanks{}
%
%\renewcommand{\abstractname}{}			%This removes word Abstract.
\begin{abstract} 
In this paper we prove that every proper Lie groupoid admits a desingularization to a regular proper Lie groupoid. When equipped with a Riemannian metric, we show that it admits a desingularization to a regular Riemannian proper Lie groupoid, arbitrarily close to the original one in the Gromov--Hausdorff distance between the quotient spaces. We construct the desingularization via a successive blow-up construction on a proper Lie groupoid.  We also prove that our construction of the desingularization is invariant under Morita equivalence of groupoids, showing that it is a desingularization of the underlying differentiable stack.
\end{abstract}
\title{Resolutions of proper Riemannian Lie groupoids}
\maketitle
\vspace{-2em}
\tableofcontents
\vspace{-3.5em}
\section{Introduction}
\label{sec:introduction}

A groupoid is a small category with invertible morphisms. Over the last 50 years, groupoids, a generalization of groups, have become a fundamental tool in the study of spaces with singularities. 

Let $\mathcal{G}$ be the set of morphisms of the category, and $M$ be the set of objects. The set $M$ is equipped with a canonical equivalence relation. Two objects $x$ and $y$ in $M$ are equivalent if and only if there is a morphism $g\in \mathcal{G}$, such that the source $s(g)$ of $g$ is $x$ and the target $t(g)$ of $g$ is $y$.  Accordingly, the set $M$ is naturally decomposed into a disjoint union of subsets of objects in the same equivalence class. Each such subset is called a leaf, or an orbit, of the groupoid $\mathcal{G}$. The quotient space of $M$ with respect to this equivalence relation is called the orbit space, or the leaf space, of $\mathcal{G}$. Many interesting examples of singular spaces in geometry can be described by orbit spaces of Lie groupoids, e.g.\ orbifolds, leaf spaces of foliations, moduli spaces of curves, etc.

A groupoid is called proper when the structure map $\mathcal{G}\to M\times M,\ g\mapsto (s(g), t(g))$, is proper. Proper Lie groupoids can be viewed as the groupoid generalizations of proper Lie group actions. Proper Lie groupoids share many nice properties with proper Lie group actions. For example, a proper Lie groupoid  can be linearized near an orbit, its object space is stratified accroding to orbit type \cite{Weinstein02,Zung06, PflaumPosthumaTang14,CrainicStruchiner13,HoyoFernandes14}, etc.

If the dimension of the leaves of a groupoid $\mathcal{G}$ is constant, the groupoid is said to be regular. {Moerdijk} obtained in \cite{Moerdijk02} a beautiful structure theorem about regular Lie groupoids by showing that they fit into an exact sequence   
\[
K\rar \G\rar E,
\] 
where \(K\) is a bundle of Lie groups and \(E\) is a foliation groupoid. In case \(\G\) is proper, the groupoid \(E\) is an orbifold groupoid, and the fibers of $K$ are compact Lie groups. In general, a proper Lie groupoid may have leaves of varying dimension, and the corresponding orbit space may have more complicated singularity than that of an orbifold. In this paper, we aim to answer the  following natural question:

\medskip

\begin{center}
{\em ``How close is a proper Lie groupoid to a regular one?"}
\end{center}

\medskip

It is well-known, c.f.\ \cite{DuitstermaatKolk00,am}, that a proper action of a Lie group $G$ on a manifold $M$ admits a ``desingularization"  $\widetilde{M}$ satisfying the following two properties:
\begin{enumerate}
\item  $\widetilde{M}$  is a smooth manifold equipped with a proper $G$-action whose orbits all have the same dimension;
\item  There is a $G$-equivariant surjective map $p\colon \widetilde{M}\to M$ which is a diffeomorphism on an open dense subset of $M$. 
\end{enumerate}

In the first main theorem of this paper (\autoref{thm:action-resolution} and  \autoref{cor:morita-blow-up}), we present a generalization of this construction to a general proper Lie groupoid. 

\begin{thm}[\autoref{thm:action-resolution} and \autoref{cor:morita-blow-up}]
\label{thm:main1}
Any proper Lie groupoid  admits a desingularization to a regular proper Lie groupoid. Moreover, Morita equivalent proper Lie groupoids admit Morita equivalent desingularizations.\end{thm}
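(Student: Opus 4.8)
The plan is to reduce to the known Lie-group case \cite{DuitstermaatKolk00,am} through a canonical, orbit-local blow-up construction, organized as an induction on the depth of the orbit-type stratification of $M$. Properness gives a canonical stratification of $M$ by orbit types; its deepest piece is a closed, $\G$-saturated submanifold $N\subset M$ (the orbits of maximal isotropy type), and the regular part --- where the leaf dimension is maximal --- is the open dense complement of all singular strata. I would take the depth as the induction parameter: depth $0$ means the groupoid is already regular, and otherwise I would blow up $N$ and argue that the depth strictly drops.

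Concretely, set $\G_N:=s^{-1}(N)=t^{-1}(N)$ (the two agree by saturation), a restricted subgroupoid $\G_N\rrar N$, and define the blow-up groupoid by blowing up object and arrow spaces along these centers,
\[
\wt M:=\mathrm{Bl}_N(M),\qquad \wt\G:=\mathrm{Bl}_{\G_N}(\G),
\]
using the real projective blow-up that replaces a center by its projectivized normal bundle. Since $s,t$ are submersions with $s^{-1}(N)=\G_N=t^{-1}(N)$, functoriality of the blow-up lets them lift to submersions $\wt s,\wt t\colon\wt\G\to\wt M$; the unit and inverse lift likewise, and --- using that blow-up commutes with the relevant transverse fibered products --- so does the multiplication $m\colon\G\times_M\G\to\G$, yielding a Lie groupoid $\wt\G\rrar\wt M$. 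The blow-down $\wt M\to M$ is proper, surjective, $\wt\G$--$\G$-equivariant, and restricts to a diffeomorphism over the dense open set $M\setminus N$, providing the desingularization map; properness of $\wt\G$ follows from that of $\G$ via these proper surjections intertwining $(\wt s,\wt t)$ with $(s,t)$. To verify smoothness of all lifted maps and the drop in depth it suffices to work near one orbit, where the linearization theorem for proper groupoids \cite{Weinstein02,Zung06,PflaumPosthumaTang14,CrainicStruchiner13,HoyoFernandes14} identifies $\G$ with the action groupoid of the compact isotropy group on a linear slice; there the construction becomes exactly the equivariant blow-up of a slice representation treated in \cite{DuitstermaatKolk00,am}. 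Iterating until the depth is $0$ produces a regular proper Lie groupoid.

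The principal difficulty I expect is establishing this compatibility rigorously: that the chosen blow-up is functorial for the source and target submersions and commutes with the fibered product $\G\times_M\G$ defining multiplication, so that $\wt\G$ is again a manifold and $\wt m$ is smooth. This hinges on cleanness of the centers and the appropriate transversality, and it is the same input that forces the depth to decrease --- over the exceptional divisor the isotropy of a projectivized normal direction is the stabilizer of a line, strictly smaller than the isotropy along $N$, so the deepest orbit type no longer occurs. The delicate point, already visible for group actions, is the trivial summand of the slice representation, which must be split off before blowing up so that the minimal orbit dimension genuinely increases; arranging this uniformly over the whole stratum is precisely what the local-to-global linearization must guarantee.

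Finally, for the Morita statement I would show that the construction descends to differentiable stacks. Morita equivalences are generated by essential equivalences $\phi\colon\G'\to\G$ together with their formal inverses, so it suffices to prove that blow-up is functorial under an essential equivalence: such a $\phi$ carries the deepest stratum of $M'$ to that of $M$, and the induced map $\wt\phi\colon\wt\G'\to\wt\G$ on blow-ups is again an essential equivalence, once more by functoriality of the blow-up for the underlying submersion applied to object and arrow spaces simultaneously. Because the centers are intrinsic --- determined by the orbit-type stratification of the orbit space, which is a Morita invariant --- the blow-up of any principal bibundle realizing a Morita equivalence is again a principal bibundle between the desingularizations. This yields the asserted Morita equivalence of desingularizations and exhibits the construction as a desingularization of the underlying differentiable stack.
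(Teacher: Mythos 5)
Your overall architecture --- blow up a deepest closed saturated stratum, induct, and transport the construction through principal bibundles --- matches the paper's, and your observations that the blow-up groupoid can equivalently be described as the action groupoid $\G\times_M\wt{M}$ and that Morita invariance reduces to blowing up a bibundle $P$ along the common preimage of the centers are both correct and are exactly how the paper proceeds. The genuine gap is in the choice of center and, consequently, in the termination argument. You blow up the stratum of maximal \emph{isotropy type} and argue that the depth drops because ``the isotropy of a projectivized normal direction is the stabilizer of a line, strictly smaller than the isotropy along $N$.'' This is false: the stabilizer of a line can equal the full isotropy group. Take $\G_x=\mathbb{Z}_2\times SO(2)$ acting on $\mathbb{R}\oplus\mathbb{R}^2$ by sign on the first factor and rotation on the second; the deepest orbit type occurs only at the origin, yet on the exceptional divisor $\mathbb{RP}^2$ of the blow-up at the origin the line $[(1,0,0)]$ still has isotropy all of $\G_x$, so the deepest orbit type reappears and your induction never terminates. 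Your aside about splitting off the trivial summand of the slice representation points at the problem but does not resolve it: what matters is the summand on which the \emph{identity component} $\G_x^\circ$ acts trivially, and the correct way to ``split it off uniformly'' is to enlarge the center so that it contains those directions.

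That is precisely what the paper does: it introduces the stratification $S^j=\{x : \codim(L_x)=j\}$, whose deepest piece locally equals $(N_xL_x)^{\G_x^\circ}\times O_x$ --- so the $\G_x^\circ$-fixed directions lie inside the center and are never projectivized --- and inducts on $j(\G)-m(\G)$, the spread of leaf codimensions, rather than on orbit-type depth. The strict decrease of this invariant is proved not by comparing stabilizers of lines but by identifying each leaf $\wt{L}$ in the exceptional divisor with a $\mathbb{Z}_2$-quotient of a nearby leaf $L_{x'}$ with $x'\notin S$, whence $\dim\wt{L}=\dim L_{x'}>\dim L_{x}$. Note also that orbit-type depth zero is not equivalent to regularity (the $S^1$-action on the open M\"obius band is regular but has two orbit types), so even the base case of your induction is misaligned. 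Once the center and the induction parameter are replaced by the leaf-codimension ones, the remainder of your outline --- lifting $s,t,m$ via functoriality of blow-ups under submersions and fibered products, properness of $\wt{\G}$, and Morita invariance via the (Morita-invariant) codimension of leaves --- goes through essentially as in the paper.
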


The main tool we use in the proof of the above  \autoref{thm:main1} is the blow-up construction for Lie groupoids. The blow-up construction for Lie groupoids has been studied extensively in the literature, e.g.\ \cite{GualtieriLi14, Nistor15, DebordSkandalis17}.
In this paper, we restrict our attention to full subgroupoids of a proper Lie groupoid. The advantage of proper Lie groupoids is that they can be linearized (c.f.\ \cite{HoyoFernandes14}) around closed saturated submanifolds \(S\subset M\). More concretely, a proper Lie groupoid is locally isomorphic to a linear action groupoid of a proper Lie groupoid. As the blow-up is a local construction, we are therefore able to give a relatively simple expression for the blow-up of the groupoid than the general case, which naturally generalizes the one of Duistermaat and Kolk in \cite{DuitstermaatKolk00}.

\autoref{thm:main1} shows that at the differential topological level, a proper Lie groupoid is ``almost" a regular proper Lie groupoid. To improve our understanding of such a desingularization process, in the second half of this paper we investigate Riemannian geometry on proper Lie groupoids and their desingularizations. 
Our study is inspired by Alexandrino's results on desingularizations of singular Riemannian foliations. In \cite{Alexandrino10}, by a successive blow-up construction, Alexandrino constructed a regular Riemannian foliation that desingularizes the original singular Riemannian foliation. In \cite{PflaumPosthumaTang14}, Pflaum and the first two authors observed that the unit space of every proper Lie groupoid carries a singular Riemannian foliation. In \cite{HoyoFernandes14}, del Hoyo and Fernandes introduced the concept of Riemannian groupoid and proved that every proper Lie groupoid is Riemannian. With these developments, it is natural for us to improve Alexandrino's theorem, \cite{Alexandrino10}, in the case of proper Lie groupoids in the framework of Riemannian groupoids. 

\begin{thm}[\autoref{thm:metric-desingularization} and \autoref{thm:riemannian-morita-desingularization}] 
\label{thm:main2}
Any Riemannian proper groupoid admits a Riemannian desingularization. Moreover, Morita equivalent Riemannian groupoids admit Morita equivalent desingularizations.
\end{thm}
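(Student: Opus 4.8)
The plan is to promote the differential-topological desingularization of \autoref{thm:main1} to the Riemannian category by transporting an invariant metric through each blow-up, adapting Alexandrino's successive-blow-up argument for singular Riemannian foliations \cite{Alexandrino10} to the groupoid framework of del Hoyo--Fernandes \cite{HoyoFernandes14}. Recall that a Riemannian structure on a proper groupoid $\G$ is an invariant $2$-metric on its nerve, which induces a transverse metric on $M$ for which the orbit foliation is the singular Riemannian foliation of \cite{PflaumPosthumaTang14}. Since the orbit-type stratification of a proper groupoid is locally finite and of finite depth, it suffices to carry out a single blow-up along a deepest stratum while preserving the Riemannian structure and controlling the Gromov--Hausdorff distance, and then to iterate; I would organise the argument as an induction on the depth.

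For the inductive step, let $S \subset M$ be a minimal (hence closed and saturated) stratum. By the linearization theorem \cite{HoyoFernandes14}, a saturated tubular neighbourhood of $S$ is isomorphic to the linear action groupoid $\G_S \ltimes \nu$, where $\G_S = \G|_S$ and $\nu = \nu(S)$ is the normal bundle with its fibrewise-linear action. First I would average, over the proper groupoid using a Haar system and a cutoff function, to obtain a $\G_S$-invariant fibre metric on $\nu$; this is exactly where properness is used. Relative to this metric the action preserves the unit sphere bundle $S(\nu)$, so the blow-up of \autoref{thm:action-resolution} replaces the tube by $S(\nu) \times [0,\infty)$ carrying the diagonal $\G_S$-action that is trivial on the radial factor. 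On this model I would put a warped-product metric $dr^2 + \phi(r)^2\, g_{S(\nu)} + (\text{base directions})$, where $g_{S(\nu)}$ is the induced invariant metric on $S(\nu)$ and $\phi$ is a profile function with $\phi(0)>0$, chosen so that the metric is a smooth invariant product near the exceptional divisor $\{r=0\}$ and agrees with the pulled-back original metric outside a collar $\{r \le \varepsilon\}$. The radial invariance of the action makes each of these pieces $\G_S$-invariant; extending the same recipe up the nerve (using bi-invariant metrics on the bundle-of-groups part $K$ from Moerdijk's sequence \cite{Moerdijk02}) yields the required invariant $2$-metric, so the blown-up groupoid is genuinely Riemannian.

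For the Gromov--Hausdorff estimate I would exploit that the blow-down map $p$ is, by construction, a metric isometry outside the $\varepsilon$-collar and collapses the exceptional divisor to $S$. Hence the induced surjection of orbit spaces $\overline{p}\colon \widetilde{M}/\widetilde\G \to M/\G$ is an isometry away from the image of the collar and displaces points there by at most $O(\varepsilon)$, giving $d_{GH}(\widetilde M/\widetilde \G,\, M/\G) = O(\varepsilon)$. Iterating over the finitely many strata and choosing the collar widths $\varepsilon_i$ summable below a prescribed $\delta$ then produces a regular Riemannian proper groupoid whose orbit space is within $\delta$ of the original, proving \autoref{thm:metric-desingularization}.

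The final, and I expect hardest, point is \autoref{thm:riemannian-morita-desingularization}. By \autoref{cor:morita-blow-up} a Morita equivalence of the underlying proper groupoids induces a Morita equivalence of their blow-ups, so it remains to check that a \emph{Riemannian} Morita equivalence is preserved. The key is that every ingredient above---the invariant fibre metric on $\nu$, the induced metric on $S(\nu)$, and the warping profile $\phi$---can be produced naturally, i.e.\ transported along the principal bibundle implementing the equivalence, because linearization, Haar-system averaging, and the blow-up are all Morita-invariant. The genuine obstacle is that the two legs of a Morita equivalence need not present identical tubes, so the profile functions and collar widths must be chosen compatibly across the bibundle; I would handle this by fixing the metric data on the bibundle first and pulling it back to both legs, after which the induced bibundle between the blow-ups is automatically a Riemannian Morita equivalence, completing the proof.
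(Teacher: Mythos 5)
Your overall strategy --- linearize around the deepest (closed) stratum, average to get an invariant fibre metric on the normal bundle, ``cylindrify'' the metric radially so that it extends over the exceptional divisor, iterate over the finitely many strata, and handle Morita invariance by fixing compatible data on the bibundle --- is the same as the paper's, and your warped product $dr^2+\phi(r)^2 g_{S(\nu)}+(\text{base})$ with $\phi(0)>0$ is essentially the paper's rescaling of the angular ($K^\perp$) directions by $1/|v|^2$ in the splitting $T(NS\setminus S)=B\oplus K\oplus K^\perp$. So the construction of the metric on the \emph{base} $\wt{M}$ (\autoref{prop:metric-base-blow-up}) is adequately sketched.

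The genuine gap is the construction of the metric on the \emph{nerve} of $\wt{\G}$, i.e.\ making the face maps $\wt{\G}^{(k)}\rar\wt{\G}^{(k-1)}$ Riemannian submersions with $S_{k+1}$ acting by isometries. You dispose of this by ``extending the same recipe up the nerve using bi-invariant metrics on the bundle-of-groups part $K$ from Moerdijk's sequence,'' but Moerdijk's extension $K\rar\G\rar E$ exists only for \emph{regular} groupoids, and after a single blow-up $\wt{\G}$ is in general still non-regular, so this tool is unavailable at the inductive step; moreover even in the regular case a bi-invariant metric on $K$ says nothing about the face maps involving the foliation-groupoid direction. The paper's actual mechanism is the pair of auxiliary constructions in \autoref{lem:Haar-system-blow-up} and \autoref{lem:connection-blow-up} (lifting the Haar system and connection to $\wt{\G}$), the submersion groupoid $\G^{[k]}$ with its pull-back metrics (\autoref{lem:metric-submersion-groupoid}), and then averaging and pushing down along $\phi_{k+1}\colon\wt{\G}^{[k+1]}\rar\wt{\G}^{(k)}$. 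This push-down consumes one simplicial degree --- the level-$(k{+}1)$ metric on $\G$ is needed to produce the level-$k$ metric on $\wt{\G}$ --- which is precisely why the paper works with full simplicial metrics rather than the $2$-metrics your proposal starts from; with only a $2$-metric as input your induction cannot even restart after one blow-up. A secondary, smaller issue: in the Morita step one cannot ``pull back'' the bibundle metric to the two legs along the submersions $\alpha,\alpha'$; one must push it down, which requires transversality/invariance and is again achieved by averaging, together with the check (done in \autoref{prop:blow-up-maps-Riemannian} by tracking the splittings $B\oplus K\oplus K^\perp$ under $d\alpha$) that the blow-down maps remain Riemannian submersions. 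Your sketch points in the right direction there but omits this verification.
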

It should be remarked that in general it is not difficult to define a metric on the desingularization of a proper Lie groupoid, however:
\begin{itemize}
\item this metric will not be simplicial, i.e., compatible with the groupoid structure,
\item the two quotient spaces, viewed from the point of view of metric topology, may be quite far away from each other.
\end{itemize}
The point of our Theorem above is that our proof takes care of both these issues. It turns out that in the context of proper Lie groupoids, the original proof of Alexandrino can be significantly simplified in our case. Furthermore, the Riemannian groupoid structure in  \autoref{thm:main2} naturally strengthens the property of being a singular Riemannian foliation associated to a proper Lie groupoid.  

The Morita invariant properties in  \autoref{thm:main1} and  \autoref{thm:main2} show that the (Riemannian) desingularization of a proper (Riemannian) groupoid is an invariant of  the underlying (Riemannian) differentiable stack associated to a proper Lie groupoid. In the future, we plan to use the desingularization introduced in this paper to investigate topological and geometric properties of the underlying stack. 

\subsection*{Organization of the paper}

We start with recalling some basics on groupoids in \autoref{sec:background}.  Then we continue in \autoref{sec:stratifications} by showing that \(M\), the space of objects, admits a stratification, where each stratum has leaves of a fixed dimension. Before we use these stratifications, we first briefly discuss general resolutions in \autoref{sec:resolutions}, of which the desingularizations are special examples. Then we will define the blow-up construction in \autoref{sec:blow-up} and use the stratification to prove  \autoref{thm:main1}. In \autoref{sec:metric} we recall the definition of a simplicial metric on a groupoid and continue with proving  \autoref{thm:main2}. We end the paper with a discussion in \autoref{sec:discussion}.

\subsection*{Acknowledgements}
The authors would like to thank Marcos Alexandrino, Marius Crainic, Rui Fernandes, Matias del Hoyo, Markus Pflaum, David Martinez Torres for helpful discussion. 
In a late stage of this work we were informed that David Martinez Torres has considered resolutions similar to those in \S \ref{sec:blow-up} 
in the context of Poisson manifolds of compact type. Tang's research is partially supported by NSF DMS 1362350. The research of Wang is supported by NWO TOP nr. 613.001.302.

%END OF TEX FILE
%
\section{Background}
\label{sec:background}

We start by introducing some notation. Given a groupoid \(\G\
\rrar M\), we denote its source and target maps by \(s\) and \(t\) respectively and the unit map \(M\rar\G\) by \(u\). A groupoid is called \emph{proper} if the combined target and source map \((t,s):\G\rar M\times M\) is a proper map. If this is the case, then \(\G\rar M\) admits proper Haar systems. 

Recall that a \emph{Haar system} on \(\G\) is  a family \(\{\mu^x\}_{x\in M}\) of measures on \(s^{-1}(x)\) which are \emph{right-invariant}: \(R_g^*(\mu^{s(g)})=\mu^{t(g)}\) for \(g\in \G\), and \emph{smooth}: for all \(f\in C_c^\infty(\G)\), the map \(x\mapsto \mu^x(f|_{s^{-1}(x)})\) is smooth. Such a Haar system \(\mu\) is called \emph{proper} if the source map restricted to the support of \(\mu\) is a proper map. Proper Haar systems will later on be used to perform an averaging procedure. 

Given any map \(f\colon N\rar M\), we say that \(\G\) \emph{acts on} \(N\), or \(N\) admits a  \emph{\(\G\)-action}, whenever there is a smooth map \(\theta\colon\G\times_M N\rar N\) , \(\theta(g,n) =  g\cdot n\) such that \(f(g\cdot n) = t(g)\). Given a \emph{connection} \(\sigma\) on \(\G\), i.e., a vector bundle morphism  \(\sigma\colon s^*TM\rar T\G\) such that \(ds\circ \sigma = id_{s^*TM}\) and \(\sigma|_M = du\), we can lift this action to a quasi-action of \(\G\) on \(TN\), defined as:
\begin{align}
	\label{eg:lift-action-tangent}
	&
	g\cdot w:= d\theta(\sigma_g \circ df(w),w ),
	& \forall w\in TN.
\end{align}
We will call this the \emph{tangent lift} of \(\theta\), and denote it by \(T\theta\).

The morphisms between groupoids we will use are the so-called right-principal bibundles. Recall that a \emph{bibundle} between two groupoids \(\G'\) and \(\G\) consists of a manifold \(P\) with smooth maps \(\alpha\colon P\rar M\) and \(\alpha\colon P'\rar M'\), called \emph{moment maps} on which the groupoids act.
\begin{center}
	\begin{tikzcd}
		\G' 
		\arrow[shift left,d] \arrow[d,shift right] \arrow[r,bend left=45]
		&
		P 
		\arrow{dr}[swap]{\alpha} \arrow{dl}{\alpha'}
		&
		\G 
		\arrow[d,shift left] \arrow[d,shift right] \arrow[l,bend right=45]
		\\
		M'
		& 
		&
		M
	\end{tikzcd}
\end{center}
These two actions commute and each moment map is invariant with respect to the other action. If the map \(\alpha'\) is a submersion, the action of \(\G\) on \(\alpha\) is free and proper, and its orbits are the fibers of \(\alpha'\), we call the bibundle \emph{right-principal}. Left-principal bibundles are defined similarly.  Two bibundles \(P\) and \(Q\) are \emph{diffeomorphic} as bibundles if they are diffeomorphic as manifolds and the diffeomorphism commutes with all the bibundle maps and actions.

\begin{defn}
	A \emph{generalized morphism} \([P]\colon\G'\drar\G\) is an isomorphism class of right principal bibundles between \(\G'\) and \(\G\).
\end{defn}

The easiest example of a generalized morphism is given by an actual morphism of groupoids: if  \(\G'\rar \G\) is a  groupoid morphism, then \(P:=M'\times_{M} \G\) defines a generalized morphism \([P]\colon\G'\dashrightarrow \G\).
From now on, we will denote the isomorphism class of \(P\) by the same symbol \(P\). Given two generalized morphisms \(Q\colon\G'' \drar \G'\) and \(P\colon\G'\drar \G\), a representative of their composition is given by \((Q\times_{M'} P)/\G'\colon\G''\drar\G\), with the \(\G'\)-action on \(Q\times_{M'} P \) given by \((q,p)\cdot g' = (q\cdot g', (g')^{-1}\cdot p)\). An isomorphism in the category of groupoids with generalized morphisms as morphisms leads to the notion of Morita equivalence:

\begin{defn}
	Two groupoids are called \emph{Morita equivalent} if there exists a generalized morphism \(P\colon\G'\drar\G\) which is also left-principal.
\end{defn}

The local study of groupoids has a long history, see for example \cite{CrainicStruchiner13,HoyoFernandes14,PflaumPosthumaTang14, Weinstein02, Zung06}. The main results of this local study are linearization results, which we state as  \autoref{thm:lin} and \autoref{cor:lin-sat}. Proofs can for example be found in \cite{HoyoFernandes14}, which uses  the notion of a Riemannian metric on a groupoid, to which we come back in \autoref{sec:metric}.
Recall that for a groupoid \(\G\rrar M\), \(S\subset M\) and \(NS\) its normal bundle is a groupoid
\begin{align}
N_S(\G) = \G_S \times_S NS\rrar NS,
\end{align}
where the target map is given by the action as \(t(g,[(d_gs)(v)])=[(d_gt)(v)]\) for \(v\in T_g\G\). Then the linearization results are:

\begin{thm}
	\label{thm:lin}
	Let \(\G\rrar M\) be a proper Lie groupoid and let \(S\subset M\) be saturated. Then \(\G\) is linearizable around \(S\), i.e., there exist open sets  \(S\subset U \subset M\) and \(S\subset V\subset NS\) such that \(\G|_U \simeq N_S(\G)|_V\).
\end{thm}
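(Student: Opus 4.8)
The plan is to prove linearization by equipping \(\G\) with a compatible Riemannian metric and transporting the groupoid structure along the associated exponential maps, following the strategy of \cite{HoyoFernandes14}.

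First I would construct a \emph{groupoid metric}: a Riemannian metric \(\eta\) on the manifold of arrows \(\G\), together with an induced metric \(\eta_M\) on \(M=\G^{(0)}\), such that the source and target maps \(s,t\colon(\G,\eta)\rar(M,\eta_M)\) are Riemannian submersions, inversion is an isometry, and the metric is right-invariant and compatible with multiplication. The clean way to encode all of these compatibilities at once is to ask for a metric on the manifold \(\G^{(2)}\) of composable pairs that is invariant under the symmetric group \(S_3\) permuting the three natural maps \(\G^{(2)}\rar\G\); such a \(2\)-metric induces the required metrics on \(\G^{(1)}\) and on \(M\). Existence is where properness is essential: starting from an arbitrary fibre metric, I would average it using a normalized cut-off of a proper Haar system (see \autoref{sec:background}), the properness guaranteeing that the averaging integrals converge to a smooth invariant metric.

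Next I would transport the groupoid structure along the exponential maps of the Levi-Civita connections of \(\eta_M\) and \(\eta\). The normal exponential map \(\exp^\perp\colon V\rar U\) of \(\eta_M\) identifies a neighbourhood \(V\) of the zero section of \(NS\) with a tubular neighbourhood \(U\) of \(S\) in \(M\), which may be taken saturated by the usual invariant tubular neighbourhood arguments for proper groupoids. On arrows, saturation of \(S\) forces \(\G_S=s^{-1}(S)=t^{-1}(S)\), so the normal bundle of \(\G_S\) in \(\G\) is \(s^{\ast}NS\cong\G_S\times_S NS\); the exponential map of \(\eta\) in directions normal to \(\G_S\) then identifies a neighbourhood of \(\G_S\) in \(\G\) with a neighbourhood of the zero section of \(N_S(\G)\). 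The heart of the argument is to check that these two exponential maps intertwine the structure maps. Because \(s\) and \(t\) are Riemannian submersions, horizontal geodesics project to geodesics, so the arrow exponential commutes with \(s\) and with \(t\); because the \(2\)-metric is \(S_3\)-invariant, the corresponding compatibility holds for multiplication and inversion, and the unit map \(u\) is sent to the zero section. Assembling these identifications yields a morphism of Lie groupoids \(\G|_U\rar N_S(\G)|_V\) which is a diffeomorphism on objects, given by \((\exp^\perp)^{-1}\), and a diffeomorphism on arrows; invertibility and compatibility with units then follow from the groupoid axioms, giving the desired isomorphism \(\G|_U\simeq N_S(\G)|_V\).

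I expect the main obstacle to be precisely this compatibility step: proving that the exponential maps genuinely descend to a \emph{morphism} of groupoids, i.e.\ that they commute simultaneously with source, target, multiplication and units. This cannot be arranged using an arbitrary compatible metric on \(\G\) alone—it is the \(S_3\)-invariance of the underlying \(2\)-metric that forces the exponential maps on \(\G^{(2)},\G^{(1)},\G^{(0)}\) to be coherent, and verifying this coherence (equivalently, that the Levi-Civita geodesics respect composition) is the technical core of the proof. The auxiliary point that the tubular neighbourhood \(U\) can be chosen saturated, so that the local isomorphism is defined over a full subgroupoid, is comparatively routine but should be handled carefully, since it is exactly the setting in which the later blow-up construction operates.
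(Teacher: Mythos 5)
The paper does not prove this theorem itself but cites \cite{HoyoFernandes14}, and your proposal is essentially a faithful reconstruction of that proof: existence of a simplicial ($2$-)metric by averaging over a proper Haar system, followed by linearization via exponential maps, with the Riemannian-submersion property of the face maps forcing the exponentials to intertwine the structure maps. One inaccuracy worth flagging: your claim that the tubular neighbourhood \(U\) ``may be taken saturated by the usual invariant tubular neighbourhood arguments for proper groupoids'' is false in general --- invariant linearization requires \(s\)-properness, which is exactly the extra hypothesis in \autoref{cor:lin-sat} --- but since the theorem as stated only asks for open (not saturated) \(U\), this does not affect the validity of your argument.
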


\begin{cor}
	\label{cor:lin-sat}
	Let \(\G\rrar M\) be a \(s\)-proper Lie groupoid and let \(S\subset M\) be saturated. Then \(\G\) is invariantly linearizable around \(S\), i.e., there exist saturated open sets \(S\subset U \subset M\) and \(S\subset V\subset NS\) such that \(\G|_U \simeq N_S(\G)|_V\).
\end{cor}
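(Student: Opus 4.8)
The plan is to obtain the invariant linearization by shrinking the (a priori non-saturated) linearizing neighbourhood produced by \autoref{thm:lin} to a saturated one. The whole point is that $s$-properness is precisely the hypothesis that makes saturations of closed sets closed, so that such a shrinking is possible.

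To begin, I would note that an $s$-proper groupoid is in particular proper: the inversion map interchanges $s$ and $t$, so $t$ is proper as well, and then for compact $K_1,K_2\subset M$ the set $(t,s)^{-1}(K_1\times K_2)=t^{-1}(K_1)\cap s^{-1}(K_2)$ is compact. Hence \autoref{thm:lin} applies and furnishes open sets $S\subset U\subset M$ and $S\subset V\subset NS$, together with a groupoid isomorphism $F\colon \G|_U\xrightarrow{\sim} N_S(\G)|_V$ covering a diffeomorphism $\phi\colon U\xrightarrow{\sim}V$ of unit spaces.

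The key step is the following claim: if $\G$ is $s$-proper, then the saturation $\mathrm{Sat}(C)=t(s^{-1}(C))$ of any closed set $C\subset M$ is again closed. To see this, take a sequence $x_n=t(g_n)$ with $s(g_n)\in C$ and $x_n\to x$. Choosing a compact neighbourhood $K$ of $x$, one has $g_n\in t^{-1}(K)$ for $n$ large, which is compact since $t$ is proper; passing to a convergent subsequence $g_{n_k}\to g$ gives $t(g)=x$ and $s(g)=\lim s(g_{n_k})\in C$ because $C$ is closed, so $x\in\mathrm{Sat}(C)$. By contrast, for merely proper groupoids orbits may be non-compact and this fails, which is exactly why the corollary upgrades the hypothesis to $s$-properness. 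This closedness is the main obstacle; everything that follows is soft point-set topology.

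With the claim in hand, I would set $U':=M\setminus\mathrm{Sat}(M\setminus U)$. Then $U'$ is open (the complement of a closed set) and saturated (the complement of a union of orbits). Moreover $U'\subset U$ trivially, and $S\subset U'$: if some $x\in S$ lay in $\mathrm{Sat}(M\setminus U)$, it would share an orbit with a point of $M\setminus U$, which, since $S$ is saturated, would force that point into $S\subset U$, a contradiction. Finally, because $F$ is a groupoid isomorphism covering $\phi$, it carries $\G$-orbits to $N_S(\G)$-orbits; hence $V':=\phi(U')$ is a saturated open subset of $NS$ with $S\subset V'\subset V$, and restricting $F$ to the full subgroupoids over $U'$ and $V'$ yields the desired isomorphism $\G|_{U'}\simeq N_S(\G)|_{V'}$ between saturated neighbourhoods of $S$.
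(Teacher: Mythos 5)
The paper does not actually prove this corollary itself --- it refers to \cite{HoyoFernandes14} --- so your argument can only be judged on its own terms. Its overall strategy (shrink the linearizing neighbourhood furnished by \autoref{thm:lin} to a saturated one, using that $s$-properness makes saturations of closed sets closed) is indeed the standard one, and most of it is fine: the reduction ``$s$-proper $\Rightarrow$ proper'', the closedness of $\mathrm{Sat}(C)=t(s^{-1}(C))$ for $C$ closed, and the verification that $U':=M\setminus\mathrm{Sat}(M\setminus U)$ is an open saturated set with $S\subset U'\subset U$ are all correct.

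The gap is in your last sentence. The map $F$ is an isomorphism of the \emph{restricted} groupoids, so it carries $\G|_U$-orbits to $N_S(\G)|_V$-orbits. For $x\in U'$ the $\G|_U$-orbit of $x$ is the full $\G$-orbit $L_x$, but its image $\phi(L_x)$ is only the $N_S(\G)|_V$-orbit of $\phi(x)$, that is $O_{\phi(x)}\cap V$, where $O_{\phi(x)}$ denotes the $N_S(\G)$-orbit in $NS$; nothing you have said forces $O_{\phi(x)}\subset V$. (Compactness shows $\phi(L_x)$ is clopen in $O_{\phi(x)}$, which closes the gap only when that orbit is connected --- and orbits of $s$-proper groupoids can be disconnected.) So $V'=\phi(U')$ is open and invariant under $N_S(\G)|_V$, but not visibly saturated in $NS$, which is what the statement requires. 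The repair is to run your shrinking argument a second time on the linear side: $N_S(\G)=\G_S\times_S NS$ is again $s$-proper (its source fibres sit over the compact source fibres of $\G_S$), so $V'':=NS\setminus\mathrm{Sat}_{N_S(\G)}\bigl(NS\setminus\phi(U')\bigr)$ is an open $N_S(\G)$-saturated neighbourhood of $S$ contained in $\phi(U')$. Then $U'':=\phi^{-1}(V'')$ is a union of $\G|_U$-orbits contained in $U'$, and every $\G|_U$-orbit meeting $U'$ is a full $\G$-orbit, so $U''$ is $\G$-saturated and $F$ restricts to the desired isomorphism $\G|_{U''}\simeq N_S(\G)|_{V''}$.
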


Besides this linearization around a saturated submanifold, one can also linearize a proper Lie groupoid more locally near a point. By \cite{PflaumPosthumaTang14} for all \(x\in M\) there exists open subsets \(U\subset M\), \(O\subset L_x\) and \(V\subset N_xL_x\), with \(L_x\) the leaf through \(x\), such that: \(U\simeq V\times O\) and 
\begin{align}
	\label{eq:local-lin}
	\G|_U
	\simeq 
	(\G_x\times V) \times (O\times O)\rrar V \times O.
\end{align}
%One way to prove this result out of the more global \autoref{thm:lin}, is by first linearizing around \(L_x\) and then picking a trivialization of the normal bundle \(NL_x\) such that \(NL_x|_{O_x}\simeq N_xL_x\times O_x\). 

%END OF TEX FILE
%
\section{Stratifications of proper Lie groupoids}
\label{sec:stratifications}
Besides the local picture of the groupoid, the orbit space \(M/\G\) also has a local structure, namely that of a stratified space. A thorough account of stratifications can, for example, be found in \cite{Pflaum01}. We recall:

\begin{defn}
	A \emph{stratification} of a Hausdorff, second-countable paracompact space \(X\) is a locally finite partition \(\{X_i\}\) of \(X\) into locally closed, connected subsets \(X_i\subset X\), called \emph{strata}, such that:
	\begin{itemize}
		\item Each \(X_i\) is a smooth manifold with the induced topology from \(X\);
		\item The closure of each \(X_i\) is a union of \(X_i\) with  strata of lower dimension.
	\end{itemize}
	The second condition is called the \emph{Frontier} condition. If a partition \(\{X_i\}\) of \(X\) satisfies all conditions of a stratification except that the subsets \(X_i\) are not connected, we call \(\{X_i\}\) a \emph{decomposition} of \(X\).
\end{defn}

\begin{lem}
	\label{lem:sep-strata}
	Let \(\{X_i\}\) be a stratification of \(X\) and suppose that \(X_1\) and \(X_2\) have the same dimension. Then there exist open neighbourhoods \(U_i\) of \(X_i\) in \(X\) such that \(U_1\cap U_2=\emptyset\).
\end{lem}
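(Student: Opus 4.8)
The plan is to separate $X_1$ and $X_2$ by comparing their distances in a metric on $X$, after first using the Frontier condition to show that neither of the two strata meets the closure of the other. The equal-dimension hypothesis will enter exactly once, and everything else is soft topology.

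First I would record that $X$ is metrizable: being Hausdorff, paracompact and second-countable, it is normal, hence regular, so the Urysohn metrization theorem furnishes a metric $d$ inducing its topology. Fix such a $d$. The key topological input is then the claim that
\[
X_1 \cap \overline{X_2} = \emptyset \qquad \text{and} \qquad X_2 \cap \overline{X_1} = \emptyset.
\]
To prove the first identity I would write $\overline{X_2} = X_2 \cup (\overline{X_2}\setminus X_2)$ and invoke the Frontier condition, by which $\overline{X_2}\setminus X_2$ is a union of strata of dimension strictly less than $\dim X_2 = \dim X_1$. A point $x \in X_1 \cap \overline{X_2}$ would therefore lie either in $X_2$ or in one of these lower-dimensional strata; but $\{X_i\}$ is a partition, so $x$ lies in exactly one stratum, namely $X_1$, which is distinct from $X_2$ and, having dimension $\dim X_1$, cannot coincide with any stratum of smaller dimension. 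This contradiction gives $X_1 \cap \overline{X_2} = \emptyset$, and the second identity follows by symmetry. This is the only place the Frontier condition and the equal-dimension assumption are used.

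Finally I would set
\[
U_1 := \{x \in X : d(x, X_1) < d(x, X_2)\}, \qquad U_2 := \{x \in X : d(x, X_2) < d(x, X_1)\}.
\]
Both are open since $x \mapsto d(x, X_i)$ is continuous, and they are disjoint because their defining inequalities are mutually exclusive. For $x \in X_1$ we have $d(x, X_1) = 0$, while $d(x, X_2) > 0$ precisely because $x \notin \overline{X_2}$ by the claim above; hence $x \in U_1$, so $X_1 \subseteq U_1$, and symmetrically $X_2 \subseteq U_2$.

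I expect the main point requiring care to be the Frontier-condition step, where one must keep careful track of the partition property together with the dimension count; the metrizability remark and the distance-function construction are then entirely routine. An alternative to the explicit distance functions would be a normality argument, but since $\overline{X_1}$ and $\overline{X_2}$ may still meet in lower-dimensional strata, the comparison of distances is the cleanest way to separate the strata themselves rather than their closures.
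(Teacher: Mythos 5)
Your proof is correct, and it takes a genuinely different (and in fact more careful) route than the paper's. The paper's proof is a one-liner: it asserts that the Frontier condition forces \(\overline{X_1}\cap\overline{X_2}=\emptyset\) and then separates these two closed sets using normality of \(X\). That assertion does not actually follow from the Frontier condition alone: the closures of two equal-dimensional strata may well meet along a common lower-dimensional stratum in both frontiers (e.g.\ the positive and negative \(x\)-axes in a stratification of the plane whose closures share the origin). What the Frontier condition together with the partition property and the equal-dimension hypothesis does give is exactly your weaker pair of identities \(X_1\cap\overline{X_2}=X_2\cap\overline{X_1}=\emptyset\), i.e.\ that \(X_1\) and \(X_2\) are \emph{separated} sets; your metrization step and the distance-comparison open sets \(U_i=\{x: d(x,X_i)<d(x,X_j)\}\) then separate them, which is precisely the statement that a metric space is completely normal. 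So your argument proves the lemma as stated in full generality, whereas the paper's argument is only valid in situations where the two closures are genuinely disjoint --- which does hold in the paper's actual application (\autoref{prop:properties-dim-strat}, property (2)), since there the strata being separated are connected components of the most singular set \(S^j\) and are themselves closed. In short: your route costs an appeal to Urysohn metrization but buys correctness of the lemma at its stated level of generality; the paper's route is shorter but its first claim needs either the closedness of the strata or your separated-sets refinement to be justified.
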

\begin{proof}
	Since the Frontier condition holds, we know that \(\overline{X_1}\cap \overline{X_2}=\emptyset\). Since \(X\) is Hausdorff and paracompact, we can separate closed sets.
\end{proof}

When \(X=M\), a manifold, one usually asks the strata \(X_i\) to be smooth submanifolds. In this case the following lemma holds,  see e.g.  \cite{Mestre16}.

\begin{lem}
\label{lem:max-stratum}
	Let \(\{X_i\} \) be a stratification of a manifold \(M\) and suppose there are no strata of codimension one. Then, there exists a unique maximal stratum, which is dense, open, and connected.
\end{lem}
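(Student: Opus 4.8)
The plan is to identify the unique maximal stratum as the union of all top-dimensional strata, and then to show this union consists of a single connected, open, dense stratum. Write $n=\dim M$ and use throughout that $M$ is connected (as the statement requires, since otherwise each component would contribute its own maximal stratum).

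First I would show that any stratum $X_i$ of maximal dimension $d=\max_j \dim X_j$ is open in $M$. Fix $x\in X_i$. By local finiteness only finitely many strata meet a neighbourhood of $x$, so if some $X_j\neq X_i$ accumulates at $x$ then $x\in\overline{X_j}$, and the Frontier condition forces $X_i\subset \overline{X_j}$ with $\dim X_i<\dim X_j\le d$, contradicting maximality of $d$. Hence a small neighbourhood of $x$ meets no other stratum and lies in $X_i$, so $X_i$ is open. An open subset of the $n$-manifold $M$ is itself $n$-dimensional, whence $d=n$ and every top-dimensional stratum is open.

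Next, set $\Sigma=\bigcup_{\dim X_j<n} X_j$. Using local finiteness together with the Frontier condition (the closure of a stratum of dimension $<n$ is a union of strata of dimension $<n$), one checks that $\Sigma$ is closed; and since there are no strata of codimension one, every stratum in $\Sigma$ has dimension $\le n-2$. Thus $U:=M\setminus\Sigma=\bigcup_{\dim X_j=n} X_j$ is open, and it is dense because $\Sigma$ has empty interior, being a locally finite union of submanifolds of dimension $<n$.

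The crux is to prove that $U$ is connected. Since $\Sigma$ is a closed subset of codimension at least two in the connected manifold $M$, removing it cannot disconnect $M$: given two points of $U$, join them by a path in $M$ (which is path-connected), and put this path in general position with respect to the strata making up $\Sigma$; as $1+(n-2)<n$, the perturbed path misses $\Sigma$ and stays in $U$. Once $U$ is known to be connected, the fact that $U$ is a disjoint union of the open top-dimensional strata forces there to be exactly one such stratum $X_0$, with $U=X_0$. Then $X_0$ is open, connected and dense, so $\overline{X_0}=M$ contains every stratum; in the Frontier partial order this makes $X_0$ the greatest, hence the unique maximal, stratum. The main obstacle is precisely this connectedness step: one must handle that $\Sigma$ is a stratified set rather than a single submanifold, which is why the uniform codimension bound $\ge 2$ — coming from the absence of codimension-one strata — is essential, since it lets the transversality argument be applied stratum by stratum.
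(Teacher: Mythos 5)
Your proof is correct, and in fact it supplies an argument where the paper gives none: the lemma is stated with only a pointer to \cite{Mestre16}, so there is no in-text proof to compare against. Your route --- (i) local finiteness plus the Frontier condition make every stratum of maximal dimension open, hence of dimension \(n=\dim M\); (ii) the complement \(\Sigma\) of the union \(U\) of top-dimensional strata is closed and is a locally finite union of submanifolds of dimension at most \(n-2\), so \(U\) is open and dense; (iii) a general-position argument shows \(U\) is connected, forcing it to consist of a single stratum --- is the standard one for this statement. The only step needing real care is (iii), which you correctly single out: the strata of \(\Sigma\) are only locally closed, so one cannot apply transversality to \(\Sigma\) as a single submanifold; instead one perturbs the path (relative to its endpoints, which already lie in the open set \(U\)) to be transverse to each stratum separately, uses that the maps transverse to each stratum form a residual set and that a countable intersection of residual sets is residual, and then invokes the closedness of \(\Sigma\) to conclude the perturbed path lies in \(U\). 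Since you flag exactly this stratum-by-stratum issue, the argument is complete modulo these routine details. Your remark that \(M\) must be taken connected for the statement to hold as written is also correct and worth making explicit, since the paper's formulation omits this hypothesis even though it is needed wherever the lemma is applied (e.g.\ in \autoref{prop:properties-dim-strat}).
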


We will continue by discussing two natural partitions of \(M\), which lead to a saturated stratification of \(M\).

\subsection{Dimensional type}
Let \(\G\rrar M\) be a proper Lie groupoid. Since we want to construct regular groupoids, i.e.\ groupoids for which each leaf has the same dimension, it is natural to consider the following partition of \(M\). For \(0\leq j\leq \dim(M)\) define:
	\begin{align}
	\label{eq:def:strat-dim}
		S^j:=\{x\in M\,|\, \codim(L_x) = j\}.
	\end{align}

The different connected components of the \(S^j\) can have different dimensions. In fact, the dimensions are easily computable.

\begin{lem}
\label{lem:dim-strata}
	Let \(L_x\) be the leaf through \(x\in M\), \(j:=\codim(L_x)\) and let \(x\in S_x\subset S^j\) be its connected component. Then we have:
	\begin{align*}
	\dim(S_x) = \dim(L_x) + \dim\left((N_xL_x)^{\G_x^\circ}\right),
	\end{align*}
	with \(V^G=\{v\in V\,|\, g\cdot v = v,\, \forall g\in G\}\) for a linear action \(G\curvearrowright V\) and \(G^\circ\) is the connected component of \(G\) at the identity.
\end{lem}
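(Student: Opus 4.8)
The plan is to reduce the whole computation to the local normal form \eqref{eq:local-lin} around the point \(x\), in which the stratum \(S^j\) becomes (a neighbourhood in) a linear subspace times an open set, so that the claimed dimension can simply be read off.

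First I would invoke the local linearization of \cite{PflaumPosthumaTang14}: there exist open sets \(x\in U\subset M\), \(O\subset L_x\) and \(V\subset N_xL_x\) with \(U\simeq V\times O\) and a groupoid isomorphism \(\G|_U \simeq (\G_x\times V)\times(O\times O)\rrar V\times O\), where \(\G_x\) acts linearly on \(V\). Since any groupoid isomorphism carries leaves diffeomorphically to leaves and preserves the ambient dimension, it preserves the codimension of leaves, and hence identifies \(S^j\cap U\) with the corresponding codimension-\(j\) locus of the model groupoid \(\G_x\ltimes V\) times the pair groupoid \(O\times O\rrar O\).

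Next I would analyse the leaves of this model. The leaf through a point \((v,o)\) is \((\G_x\cdot v)\times O\), so its codimension in \(V\times O\) equals \(\dim V-\dim(\G_x\cdot v)=j-\dim(\G_x\cdot v)\), using \(\dim V=\dim N_xL_x=\codim(L_x)=j\). Therefore the leaf has codimension exactly \(j\) if and only if the orbit \(\G_x\cdot v\) is zero-dimensional. As \(\dim(\G_x\cdot v)=\dim \G_x-\dim(\G_x)_v\), this occurs precisely when the stabilizer \((\G_x)_v\) contains \(\G_x^\circ\), that is, when \(v\) is fixed by the identity component \(\G_x^\circ\). Hence \(S^j\cap U\) is identified with \(\bigl(V\cap (N_xL_x)^{\G_x^\circ}\bigr)\times O\).

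Finally, since \((N_xL_x)^{\G_x^\circ}\) is the fixed-point set of a linear action of the connected group \(\G_x^\circ\), it is a linear subspace of \(N_xL_x\), hence a smooth submanifold. Consequently, near \(x\), the set \(S^j\) is a manifold of dimension \(\dim\bigl((N_xL_x)^{\G_x^\circ}\bigr)+\dim O=\dim\bigl((N_xL_x)^{\G_x^\circ}\bigr)+\dim(L_x)\), and since the connected component \(S_x\) is a connected manifold its dimension is constant and equal to this local value, yielding the formula. The one point requiring care is the equivalence between a leaf having codimension exactly \(j\) and the normal vector \(v\) being \(\G_x^\circ\)-fixed, i.e.\ the identification of the zero-dimensional-orbit locus with \((N_xL_x)^{\G_x^\circ}\); once this is established, the remainder is bookkeeping with the normal form and the observation that the linearizing isomorphism preserves leaf codimension.
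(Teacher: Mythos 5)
Your proof is correct and follows essentially the same route as the paper: both pass to the local linearization $\G|_U \simeq (\G_x\times V)\times(O\times O)$, identify the leaf through $(v,o)$ with $(\G_x\cdot v)\times O$, and observe that this leaf has codimension $j$ precisely when the orbit $\G_x\cdot v$ is discrete, i.e.\ when $v\in (N_xL_x)^{\G_x^\circ}$, so that $S^j\cap U\simeq (V\cap(N_xL_x)^{\G_x^\circ})\times O$. Your added justification of the equivalence between zero-dimensional orbits and $\G_x^\circ$-fixed vectors (via the stabilizer containing the identity component) is a correct expansion of a step the paper states without comment.
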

\begin{proof}
	Using the local linearization around \(x\in M\), we have that \(\G|_{U_x}\simeq (\G_x\times N_xL_x)\times (O_x\times O_x)\rrar N_xL_x \times O_x\). Hence for any \(v\in N_xL_x\), we have that its leaf locally looks like \(\G_x\cdot v\times O_x\). Therefore:
	\begin{align*}
	\dim(L_v) = \dim(\G_x\cdot v) + \dim(O_x).
	\end{align*}
	This is equal to \(\dim(O_x)\) if and only if \(\G_x\cdot v\) is discrete, i.e.\ \(v\in (N_xL_x)^{\G_x^\circ}\). Hence \(S_x\cap U \simeq (N_xL_x)^{\G_x^\circ}\times O_x\), which proves the lemma.
\end{proof}

\begin{prop}
\label{prop:strat-dim}
	The connected components of \(S^j\) form a stratification of \(M\).
\end{prop}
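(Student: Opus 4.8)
The plan is to verify the three defining properties of a stratification directly for the family of connected components of the sets $S^j$, reducing every local question to the linear model \eqref{eq:local-lin} in which a \emph{compact} isotropy group acts linearly on a normal space. Throughout I fix a point and write $K:=\G_x$ (compact, since $\G$ is proper), $V:=N_xL_x$, and $j:=\codim(L_x)$, so that $\dim V=j$. By \eqref{eq:local-lin} there is a neighbourhood $U\simeq V\times O$ with the leaf through $(v,o)$ equal to $(K\cdot v)\times O$, whence $\codim(L_{(v,o)})=\dim V-\dim(K\cdot v)$. Since orbit dimension is detected by the identity component, $\dim(K\cdot v)=\dim(K^\circ\cdot v)$, and choosing a $K$-invariant splitting $V=V^{K^\circ}\oplus W$ (so $W^{K^\circ}=0$) this dimension depends only on the $W$-component of $v$. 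Moreover it is scale-invariant, as $K^\circ\cdot(tv)$ has the same dimension as $K^\circ\cdot v$ for $t\neq 0$.

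From this local picture I would read off smoothness, local closedness and local finiteness at once. The computation in \autoref{lem:dim-strata} gives $S^j\cap U=V^{K^\circ}\times O$, which is a connected closed submanifold of $U$; hence it is a single connected component of $S^j$, equal to the stratum $S_\alpha$ through $x$. This shows simultaneously that each $S_\alpha$ is a smooth manifold and that it is locally closed (indeed locally a closed submanifold). Because $K$ is compact, its linear action on $V$ has only finitely many orbit types (c.f.\ \cite{DuitstermaatKolk00}), so finitely many orbit-dimension sets meet $U$, each a cone with finitely many components; consequently only finitely many strata $S_\alpha$ meet $U$, which is local finiteness.

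The heart of the matter, and the main obstacle, is the Frontier condition. Suppose $S_\alpha\subset S^j$ and $S_\beta\subset S^k$ are distinct strata with some $y\in S_\alpha\cap\overline{S_\beta}$. Working in the model at $y$, the set $S^j\cap U=V^{K^\circ}\times O$ is already connected, forcing $k\neq j$; and $\dim(K\cdot v)\ge 0$ forces $k\le j$, so $k<j$ and $d:=\dim V-k\ge 1$. Here $S^k\cap U=V^{K^\circ}\times\{c\in W:\dim(K^\circ\cdot c)=d\}\times O$, and a sequence in $S_\beta$ converging to $y$ supplies a point $(a_0,b_0,o_0)$ of $S_\beta$ in this set with $b_0\neq 0$. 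Since $K^\circ$ fixes $V^{K^\circ}$ pointwise and orbit dimension is scale-invariant, the connected set $V^{K^\circ}\times\{tb_0:t>0\}\times O$ lies in $S^k\cap U$, meets $S_\beta$ (at $t=1$), hence lies in $S_\beta$, and accumulates as $t\to 0$ onto $V^{K^\circ}\times\{0\}\times O=S_\alpha\cap U$. Thus $S_\alpha\cap U\subseteq\overline{S_\beta}$. The same description shows $\dim S_\beta\ge\dim V^{K^\circ}+\dim O+d>\dim V^{K^\circ}+\dim O=\dim S_\alpha$, giving the strict drop in dimension.

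Finally I would globalise the inclusion by a connectedness argument: the set $\{y\in S_\alpha:y\in\overline{S_\beta}\}=S_\alpha\cap\overline{S_\beta}$ is closed in $S_\alpha$ by construction and, by the local statement just proved, open in $S_\alpha$; as $S_\alpha$ is connected and this set is nonempty, it equals all of $S_\alpha$, so $S_\alpha\subseteq\overline{S_\beta}$. Combined with the strict dimension inequality this is exactly the Frontier condition, and together with smoothness, local closedness and local finiteness it establishes that the connected components of the $S^j$ form a stratification of $M$. The two genuinely nontrivial ingredients are both localised in the model \eqref{eq:local-lin}: the conical (scale-invariant) structure of the orbit-dimension sets of a compact linear action, which forces the apex into every nearby closure, and the passage from the local inclusion to the global one via connectedness of $S_\alpha$.
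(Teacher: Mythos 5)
Your proposal is correct and takes essentially the same route as the paper's proof: both reduce everything to the local linear model, use the scale-invariance (cone structure) of the orbit-dimension sets to obtain the local frontier inclusion together with the strict drop in dimension, and then globalise the inclusion by an open-closed argument on the connected stratum. The only differences are cosmetic --- you make the local-finiteness and smoothness checks explicit via finiteness of orbit types, and you bound \(\dim S_\beta\) by exhibiting a submanifold directly rather than via the inclusion \((N_yL_y)^{\G_y^\circ}\subset (N_xL_x)^{\G_x^\circ}\) and \autoref{lem:dim-strata}.
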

\begin{proof}
	Let \(S\subset S^j\) be a connected component and let \(y\in \bar{S}\setminus S\). Local linearization around \(y\) gives an open neighbourhood \(U_y\) of \(y\) such that
	\begin{align*}
	\G|_{U_y}\simeq (\G_y \times N_y L_y) \times (O_y\times O_y)\rrar N_yL_y\times O_y\simeq U_y.
	\end{align*}
	Since \(y\in \bar{S}\) there exists an \(x\in S\cap U_y\). Without loss of generality, we can assume that \(x=v\in N_yL_y\). We immediately realize that \(L_x\cap U_y\simeq \G_y\cdot v \times O_y\). Since \(S\) is connected, we have that \(\dim(L_x)\neq \dim(L_y)\) and therefore \(\dim(L_x) > \dim(L_y)\) and \(G_y\cdot v\) is not discrete. Hence \(G_y\cdot v \cap (N_yL_y)^{\G_y^\circ}=\emptyset\) and we can view \((N_yL_y)^{\G_y^\circ}\) as a subspace of \(N_xL_x = N_v(N_yL_y)\). Moreover, \(\G_x\simeq \mbox{Stab}(v)<\G_y\), the stabilizer of \(v\) with respect to the \(\G_y\)-action.  Hence \((N_yL_y)^{\G_y^\circ}\) is in fact a subspace of \((N_xL_x)^{\G_x^\circ}\). Therefore, we find that:
	\begin{align*}
	\dim(S_x)
	& 
	= \dim(L_x) + \dim((N_xL_x)^{\G_x^\circ}) 
	> \dim(L_y) + \dim((N_yL_y)^{\G_y^\circ})
	= \dim(S_y).
	\end{align*}
	To conclude that the Frontier condition holds, we need to show that \(S_y\), the stratum through \(y\), lies completely inside \(\bar{S}\). Let \(A=S_y\cap \bar{S}\), so that \(A\subset S_y\). Since \(\bar{S}\) is closed, \(A\) is a closed subset of \(S_y\). We will show that it is open as well and hence by the connectedness of \(S_y\), we conclude that \(A=S_y\). In the notation as above, if \(y\in \bar{S}\) and \(v\in N_yL_y\) belongs to \(S\), then for all \(y'\in O_y\), all \(w\in (N_yL_y)^{\G_y^\circ}\) and all \(\lambda>0\), we have that \((w+ \lambda v, y')\in S\). Hence \((w,y')\in \bar{S}\). Note that \(S_y\cap U_y = (N_yL_y)^{\G_y^\circ}\times O_y\). Hence \(\bar{S}\) is indeed open in \(S_y\) and therefore the Frontier condition holds. Note that for all \(y\), we have that \(S_y\cap U_y  = (N_yL_y)^{\G_y^\circ}\times O_y\) also shows that the \(S_y\) are embedded submanifolds of \(M\). This concludes the proposition.
\end{proof}

\begin{defn}
	Let \(\G\rrar M\) be a proper Lie groupoid. The stratification by connected components of the subsets \(S^j\) is called the \emph{dimension stratification} of \(\G\).
\end{defn}

\begin{prop}
\label{prop:properties-dim-strat}
	Let \(\G\rrar M\) be a proper Lie groupoid and define the integers  \(j=\max\{0\leq i \leq \dim(M)\,|\,S^i\neq\emptyset \}\) and \(m =\min\{0\leq i \leq \dim(M)\,|\,S^i\neq\emptyset \}\). Then the following properties hold:
	\begin{enumerate}
		\item Each stratum \(S\subset S^j\) is closed;
		\item For all \(S_1,S_2\subset S^j\) there exist open neighbourhoods \(S_i\subset U_i\subset M\) such that \(U_1\cap U_2=\emptyset\);
		\item Each stratum \(S\subset (S^m)^c\)  has \(\dim(S)  <  \dim(M)-1\);
		\item \(S^m\) is open, dense and connected;
		\item \(x\in S^m\) if and only if \(\G^\circ_x\) acts trivially on \(N_xL_x\).
	\end{enumerate}
\end{prop}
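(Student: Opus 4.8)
The plan is to reduce all five statements to a single representation-theoretic observation, combined with the local model \eqref{eq:local-lin} and the frontier relation already established inside the proof of \autoref{prop:strat-dim}. The observation is the following. Since \(\G\) is proper, every isotropy group \(\G_x\) is compact, so \(\G_x^\circ\) is a connected compact Lie group. Such a group admits no nontrivial one-dimensional real representation: a homomorphism \(\G_x^\circ\rar \mathrm{GL}(1,\R)\) has compact connected image in \(\R_{>0}\), hence is trivial. Writing \(N_xL_x = (N_xL_x)^{\G_x^\circ}\oplus W\) with \(W\) an invariant complement (available by averaging), \(W\) contains no trivial subrepresentation, so each of its irreducible summands has real dimension at least two; thus \(\dim W\neq 1\). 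By \autoref{lem:dim-strata} this yields
\[
\dim(M)-\dim(S_x)=\dim(N_xL_x)-\dim\big((N_xL_x)^{\G_x^\circ}\big)=\dim W\in\{0\}\cup\{2,3,\dots\}.
\]
In other words, each stratum is either open in \(M\) (precisely when \(\G_x^\circ\) acts trivially on \(N_xL_x\)) or has codimension at least two; in particular \(M\) has no codimension-one strata.

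For (1), I would take \(S\subset S^j\) with \(j\) maximal and suppose \(y\in\bar S\setminus S\). The computation in the proof of \autoref{prop:strat-dim} gives \(\dim(L_y)<\dim(L_x)\) for a nearby \(x\in S\), i.e.\ \(\codim(L_y)>\codim(L_x)=j\), contradicting maximality of \(j\); hence \(\bar S=S\). For (2), two distinct strata \(S_1,S_2\subset S^j\) are distinct connected components of \(S^j\), hence disjoint, and each is closed by (1); since \(M\) is paracompact and Hausdorff, and therefore normal, they can be separated by disjoint open neighbourhoods.

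For (4) and (5) I would set \(T:=\{x\in M : \G_x^\circ \text{ acts trivially on } N_xL_x\}\), which by the observation above is exactly the union of the open (top-dimensional) strata. Since there are no codimension-one strata, \autoref{lem:max-stratum} provides a unique maximal stratum \(S_{\max}\), which is open, dense and connected; as every open stratum is nonempty and meets the dense \(S_{\max}\), and strata are disjoint, \(S_{\max}\) is the \emph{only} open stratum, so \(T=S_{\max}\). To identify this with \(S^m\), I would use \eqref{eq:local-lin}: for \(x\in S^m\) and any \(v\in N_xL_x\) one has \(\dim(L_v)=\dim(\G_x\cdot v)+\dim(L_x)\ge \dim(L_x)\), so \(\codim(L_v)\le \codim(L_x)=m\); minimality of \(m\) forces equality, hence \(\G_x\cdot v\) is discrete and \(\G_x^\circ\) acts trivially. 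Thus \(S^m\subset T=S_{\max}\); since \(S^m\neq\emptyset\) and \(S_{\max}\) is a single stratum of constant leaf-codimension, that codimension equals \(m\) and \(S_{\max}\subset S^m\). Therefore \(S^m=S_{\max}=T\), giving (4) and (5) at once. Finally (3) follows immediately: any stratum contained in \((S^m)^c=M\setminus S_{\max}\) is not the open stratum, so it has positive codimension and hence, by the observation, codimension at least two, i.e.\ \(\dim(S)\le \dim(M)-2<\dim(M)-1\).

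The main obstacle is the representation-theoretic observation that the non-fixed part of the isotropy representation is never one-dimensional: this is precisely what excludes codimension-one strata and thereby unlocks \autoref{lem:max-stratum}. The only other delicate point is the clean identification \(S^m=S_{\max}\), for which one must use both that \(S^m\) is nonempty and that the maximal stratum is the unique open one.
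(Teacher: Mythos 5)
Your proof is correct and follows essentially the same route as the paper: closedness of the top-codimension strata via the codimension-increase argument from \autoref{prop:strat-dim}, separation by normality, exclusion of codimension-one strata from compactness and connectedness of \(\G_x^\circ\), and then \autoref{lem:max-stratum}. Your representation-theoretic phrasing (a connected compact group has no nontrivial one-dimensional real representation) is just a cleaner packaging of the paper's \(g\cdot v=-v\) argument, and your careful identification \(S^m=S_{\max}=T\) fills in the paper's terse treatment of property (5).
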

\begin{proof}
	Using the proof of \autoref{prop:strat-dim}, we know that the leaf of any \(y\in \bar{S}\setminus S\) has higher co-dimension than the leaves in \(S\) itself. Since \(j\) is chosen as the maximum, we conclude property (1). Property (2) then follows immediately as we can separate closed sets.
	For property(3), note that \autoref{lem:dim-strata} implies that if \(\dim(S_x) =\dim(M)-1 \), \(N_xL_x/ (N_xL_X)^{\G_x^\circ}\) is one-dimensional and therefore generated by a vector \(v\in N_xL_x\). Therefore, there exists \(g\in \G_x^\circ\) such that \(g\cdot v \neq v \) and \(g\) acts trivially on the rest of \(N_xL_x\). As the linearization of \(\G\) is through a metric, we know that \(||g\cdot v|| = ||v||\) and hence \(g\cdot v = -v\). But this shows that \(\G_x^\circ\) has a \(\mathbb{Z}_2\) component, which is a contradiction with \(\G_x^\circ\) being connected. Hence there are no codimension-one strata. Now \autoref{lem:max-stratum} implies property (4). The final property is now easily realized by the denseness of \(S^m\).
\end{proof}

\subsection{Reduced normal orbit type}
It is obvious that the action of \(\G_x^\circ\) plays a natural role in stratifying \(M\) by dimension. 
Again, let \(\G\rrar M\) be a proper Lie groupoid. In this section, we will show that the dimension stratification comes naturally from a decomposition, which is defined using the \(\G_x^\circ\)-action.  As the results of this section give exactly the same stratification as the dimension stratification, it is not necessary to read this section in order to understand the rest of the paper.

Given a connected Lie group \(G\) acting linearly on a vector space \(V\), we define:
	\begin{align}
		S^{(G,V)}:=\{x\in M\,|\,( G_x^\circ \curvearrowright N_xL_x ) \simeq (G\curvearrowright V)\}.
	\end{align}
Note that these subsets are smooth saturated manifolds. Moreover, if \(x\in S^{(G,V)}\) for some pair \((G,V)\), we immediately get that 
	\begin{align*}
		\codim(L_x) 
		= 
		\dim(N_xL_x)
		= 
		\dim (V),
	\end{align*}
and therefore
	\begin{align}
		S^{(G,V)}\subset S^{\dim(V)}.
	\end{align}

Moreover,  \autoref{lem:dim-strata} shows that all the connected components of the \(S^{(G,V)}\) have the same dimension.

\begin{prop}
	The set of \(S^{(G,V)}\) form a decomposition of \(M\). Moreover, it connected components agree with the connected components of the \(S^j\) defined in 	\autoref{eq:def:strat-dim}. Hence the corresponding stratification is the same as the dimension stratification.
\end{prop}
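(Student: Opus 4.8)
The plan is to reduce the whole proposition to a single local statement, namely that the reduced normal orbit type is locally constant along each \(S^j\); granting this, the three assertions become formal consequences of \autoref{prop:strat-dim}. Most of the partition and smoothness bookkeeping is already recorded above: every \(x\in M\) has a well-defined isomorphism class \((\G_x^\circ\curvearrowright N_xL_x)\), so the \(S^{(G,V)}\) partition \(M\); they are smooth saturated submanifolds; and \(S^{(G,V)}\subset S^{\dim V}\), so the partition \(\{S^{(G,V)}\}\) refines the partition \(\{S^j\}\).

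The substantive step is the local computation. I would fix \(x\in S^j\) and use the local linearization \(\G|_U\simeq(\G_x\times N_xL_x)\times(O_x\times O_x)\), with \(\G_x\) compact. As in the proof of \autoref{prop:strat-dim}, a point \((v,y)\in N_xL_x\times O_x\) lies in \(S^j\) iff the orbit \(\G_x\cdot v\) is discrete, i.e. iff \(v\in(N_xL_x)^{\G_x^\circ}\), so \(S^j\cap U=(N_xL_x)^{\G_x^\circ}\times O_x\). For such \((v,y)\) I would read off its reduced normal orbit type directly:
\begin{itemize}
\item its isotropy is \(\G_{(v,y)}=\mathrm{Stab}_{\G_x}(v)\), and since \(v\) is fixed by \(\G_x^\circ\) we have \(\G_x^\circ\subset\mathrm{Stab}_{\G_x}(v)\subset\G_x\), whence \(\G_{(v,y)}^\circ=\G_x^\circ\);
\item the leaf through \((v,y)\) is \(\G_x\cdot v\times O_x\); as \(\G_x\cdot v\) is discrete, \(N_{(v,y)}L_{(v,y)}\cong T_v(N_xL_x)=N_xL_x\);
\item the isotropy representation on this space is the differential at \(v\) of the linear \(\G_x\)-action, which is that linear action itself, restricted to \(\G_x^\circ\).
\end{itemize}
Hence \((\G_{(v,y)}^\circ\curvearrowright N_{(v,y)}L_{(v,y)})\simeq(\G_x^\circ\curvearrowright N_xL_x)\), so the reduced normal orbit type is constant on \(S^j\cap U\), equal to its value at \(x\). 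I expect this computation — in particular the identification of the slice representation at a \(\G_x^\circ\)-fixed vector with the ambient representation — to be the only real point; everything else is inherited.

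Since \(x\) was arbitrary, the reduced normal orbit type is locally constant, hence constant on each connected component of \(S^j\). The coincidence of connected components then follows in a line: a connected component \(C\) of some \(S^{(G,V)}\) sits inside a connected component \(S\) of \(S^{\dim V}\); constancy of the type on \(S\) forces \(S\subset S^{(G,V)}\), and maximality of \(C\) gives \(S=C\), while conversely each connected component of \(S^j\) carries a single type and so is a connected component of the corresponding \(S^{(G,V)}\). Finally, since each \(S^{(G,V)}\) is thereby a locally finite union of dimension strata all of the same dimension (by \autoref{lem:dim-strata}), the family \(\{S^{(G,V)}\}\) inherits from \autoref{prop:strat-dim} the properties of a decomposition — locally closed smooth pieces, local finiteness, and the Frontier condition — whose associated stratification by connected components is exactly the dimension stratification. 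This yields all three assertions.
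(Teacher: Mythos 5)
Your proof is correct and follows essentially the same route as the paper: both rest on the local linearization together with the observation that at a \(\G_x^\circ\)-fixed normal vector the identity component of the isotropy and its slice representation coincide with \(\G_x^\circ\curvearrowright N_xL_x\), so that the reduced normal orbit type is locally constant along each \(S^j\) --- the paper merely packages this as each component of \(S^{(G,V)}\) being open and closed in \(S^k\). You make the representation-theoretic identification more explicit than the paper does, while your claim that the Frontier condition for the (disconnected) sets \(S^{(G,V)}\) is simply `inherited' is no less terse than the paper's own one-line justification of that same point.
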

\begin{proof}
	Using the local linearization, similarly to the proof of  \autoref{prop:strat-dim}, one immediately realizes that if \(\bar{S}\cap S'\neq\emptyset\), then \(S'\subset \bar{S}\). Once again, using  \autoref{lem:dim-strata}, we conclude that the Frontier condition holds. Hence the partition into sets \(S^{(G,V)}\) forms a decomposition. Now, let \(S\) be a connected component of \(S^{(G,V)}\) and let \(k=\dim(V)\). For any \(y\in\bar{S}\subset S^k\),  an open neighbourhood \(U_y\subset M\) of \(y\) such that \(\G|_U\simeq (\G_y\times N_y L_y ) \times( O_y\times O_y)\), and \(v\in N_yL_y\) corresponding to \(x\in S\cap U_y\), we have that \(\G_x \simeq \mbox{Stab}(v)< \G_y\). This is a closed Lie subgroup, which is of the same dimension. Hence \(\G_x^\circ = \G_y^\circ\) and \(y\in S\), which is therefore closed in \(S^k\). This argument shows that \(S\subset S^k\) is also open: given a leaf in \(S\), any leaf has either a different dimension or the same reduced normal orbit type. That is, for \(y \in S\) we have that \(S\cap U_y = (N_yL_y)^{\G_y^\circ}\times O_y = S^k\cap U_y\). Thus \(S\subset S^k\) is both open and closed and hence connected.
\end{proof}

\begin{defn}
	Let \(\G\rrar M\) be a proper Lie groupoid. The decomposition of \(M\) into the sets \(S^{(G,V)}\) is called the \emph{reduced normal obrbit decomposition}.
\end{defn}

\begin{rem}
	The above decomposition is called reduced, as we only use the connected components of the isotropy groups. One could also use the whole groups, but the induced stratification will be different. An easy example of this phenomenon is the \(S^1\)-action on the (open) Mobi\"us band. Indeed, this groupoid is regular and connected and hence only has one stratum in the dimension stratification. The middle leaf, however, has a \(\mathbb{Z}_2\)-action as isotropy, and all the other leaves have trivial isotropy.
\end{rem}

%END OF TEX FILE
%
\section{Resolutions of Lie groupoids}
\label{sec:resolutions}
\begin{defn}
	Let \(\G\rrar M\) be a connected Lie groupoid of dimension \(n\). We define the \emph{category of resolutions over \(\G\)}, denoted by \({\rm Res}_\G\), as the category which has as objects \((\h,\pi)\) with \(\h\) a regular Lie groupoid of dimension \(n\) and \(\pi\colon\h\rar \G\) a surjective proper map, which is an isomorphism almost everywhere. An arrow \((\h',\pi')\rar (\h,\pi)\) in the category \({\rm Res}_\G\)  is a generalized morphism \(\h'\rar\h\) which commutes with \(\pi\) and \(\pi'\):
	\begin{center}
		\begin{tikzcd}
			\h' \arrow{dr}[swap]{\pi'}  \arrow[dashed, rr]
			&
			&
			\h \arrow{dl}{\pi}
			\\
			&
			\G
		\end{tikzcd}
	\end{center} 
\end{defn}

Properness of the map in a resolution comes into play in the next lemma:

\begin{lem}
	Let \(\h\rrar N\) be a resolution of \(\G\rrar M\). Then \(\G\) is proper if and only if \(\h\) is.
\end{lem}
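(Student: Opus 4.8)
The plan is to use that a resolution $\pi\colon\h\rar\G$ is a morphism of Lie groupoids, so it carries both the surjective proper \emph{arrow} map $\pi\colon\h\rar\G$ of the definition and an \emph{object} map $\pi_0\colon N\rar M$, and that these intertwine the anchors. Compatibility of $\pi$ with source and target gives the commuting identity
\[
(t,s)\circ\pi=(\pi_0\times\pi_0)\circ(t_\h,s_\h)\colon\h\rar M\times M,
\]
and both implications then reduce to chasing preimages of compact sets around this square, using that $\pi$ is proper and surjective.

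As a preliminary I would first show that $\pi_0$ is itself proper. Since $\pi$ is a groupoid morphism it sends units to units, so its restriction to the closed unit submanifold $u_\h(N)\subset\h$ takes values in the closed unit submanifold $u_\G(M)\subset\G$. The restriction of a proper map to a closed subset is proper, and a proper map whose image lies in a closed subset is still proper when corestricted to that subset with the subspace topology; identifying $N\cong u_\h(N)$ and $M\cong u_\G(M)$ thus shows $\pi_0\colon N\rar M$ is proper, and hence so is $\pi_0\times\pi_0\colon N\times N\rar M\times M$.

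For the direction ``$\G$ proper $\Rightarrow\h$ proper'', let $K\subset N\times N$ be compact. Then $(\pi_0\times\pi_0)(K)\subset M\times M$ is compact, so $(t,s)^{-1}\big((\pi_0\times\pi_0)(K)\big)$ is compact by properness of $\G$, and its $\pi$-preimage $\pi^{-1}\big((t,s)^{-1}((\pi_0\times\pi_0)(K))\big)$ is compact since $\pi$ is proper. The commuting square yields the inclusion $(t_\h,s_\h)^{-1}(K)\subset\pi^{-1}\big((t,s)^{-1}((\pi_0\times\pi_0)(K))\big)$, and as a closed subset of a compact set it is compact. Hence $(t_\h,s_\h)$ is proper.

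For the converse ``$\h$ proper $\Rightarrow\G$ proper'', let $K\subset M\times M$ be compact. By properness of $\pi_0\times\pi_0$ the set $(\pi_0\times\pi_0)^{-1}(K)$ is compact, so $(t_\h,s_\h)^{-1}\big((\pi_0\times\pi_0)^{-1}(K)\big)$ is compact since $\h$ is proper. The same square identifies this set with $\pi^{-1}\big((t,s)^{-1}(K)\big)$, and surjectivity of $\pi$ gives $(t,s)^{-1}(K)=\pi\big(\pi^{-1}((t,s)^{-1}(K))\big)$, a continuous image of a compact set, hence compact. Thus $(t,s)$ is proper. The only step beyond a formal diagram chase is the properness of the object map $\pi_0$, extracted from that of the arrow map $\pi$ by restricting to the unit spaces; with that in hand the two directions are symmetric point-set arguments.
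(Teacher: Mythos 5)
Your proof is correct and follows essentially the same route as the paper's: the same commutative square relating $(t,s)\circ\pi$ and $(\pi_0\times\pi_0)\circ(t_\h,s_\h)$, the same inclusion for one direction and the same surjectivity-based identity for the other. The only difference is that you explicitly derive the properness of $\pi_0$ (and hence of $\pi_0\times\pi_0$) from that of $\pi$ by restricting to the closed unit submanifolds, a point the paper passes over by declaring both horizontal maps ``by assumption proper''; this is a worthwhile clarification but not a different argument.
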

\begin{proof}
	Consider the commutative diagram:
	\begin{center}
		\begin{tikzcd}
			\h \arrow{r}{\pi} \arrow{d}[swap]{(s,t)}
			&
			\G \arrow{d}{(s,t)}
			\\
			N\times N \arrow{r}[swap]{\pi_0\times\pi_0}
			&
			M\times M
		\end{tikzcd}
	\end{center}
	Note that the horizontal morphisms are by assumption proper. Using surjectivity, it follows that for \(K_M\subset M\times M\) and \(K_N\subset N\times N\) we have:
	\begin{align*}
	(s,t)^{-1}(K_M) = \pi_1\left(  ( \pi_0\times\pi_0\circ (s,t)) ^{-1}(K_M)\right),
	\\
	(s,t)^{-1}(K_N) \subset ((s,t)\circ\pi_1)^{-1}\left( \pi_0\times\pi_0(K_N)  \right).
	\end{align*}
	The first equation immediately shows that if \(\h\) is proper, so is \(\G\). The second one does the reverse when we consider that closed subsets of compact sets are compact.
\end{proof}

With this definition of a resolution, we allow the existence of several resolutions for a single groupoid. For example, if \(\h\rar \G\) is a resolution, then so is \(\h\times K\) for any finite  group \(K\).  Therefore, we want to be able to measure how large the resolution is.

%\begin{defn}
%	Let \((\h,\pi)\) be a resolution over \(\G\). We call it a \emph{minimal resolution} \(\pi_n: \h_n\rar \G_{\pi(n)}\) is injective on the isotropy groups, i.e. if for all \(h\in\h\) such that \(s(h)=t(h)\) and \(\pi(h)\in M\) we have that \(h\in N\).
%\end{defn}

%It is easy to see that the resolution \(\h\times K\) is not minimal, even when \(\h\) is minimal. Note that it is also not connected. There is an easy method of constructing minimal resolution: through action resolutions.

\begin{defn}
	Let \((\h,\pi)\) be a resolution of \(\G\). We call it an \emph{action resolution} if the map \(\h\rar \G\times_M N\), given by \(h\mapsto (\pi(h),s(h))\), is an isomorphism.
\end{defn}

%One can see that any action resolution is minimal. 
Action resolutions are the same as regular groupoid actions in the following sense: if \(\h\) is an action resolution, then \(\G\) acts regularly on the proper map \(N\rar M\), and if \(\G\) acts regularly on any proper map \(N\rar M\), then \(\h:= \G\times_M N\) is an action resolution.

\begin{rem}
	Note that \(\h\) being an action resolution does not imply that the map \(\pi\) is submersive. Indeed, as \(\h\) is regular, the isotropy groups are all of the same dimension, but the isotropy groups of \(\G\) change dimension. When we start constructing resolutions, we will see an example of this.
\end{rem}

Before showing existence of (action) resolutions, which is done in the next section, we consider the behaviour of resolutions under Morita equivalences. The following proposition shows that action resolutions are stable under Morita equivalence.

\begin{prop}
	Let \(\G'\) be Morita equivalent to \(\G\) via a principal bibundle \(P\) and let \(\h\) be a action resolution of \(\G\). Then \(\G'\) admits an action resolution \(\h'\) which is Morita equivalent to \(\h\). 
\end{prop}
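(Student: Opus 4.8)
The plan is to transport the $\G$-space underlying $\h$ across the Morita equivalence $P$ by an associated-bundle construction, and then to exhibit a bibundle between the resulting action groupoids. Write $\h=\G\ltimes N$ for the action groupoid of a regular $\G$-action on a proper map $\pi_0\colon N\rar M$, with resolution map the projection $\pi\colon\h\rar\G$, $(g,n)\mapsto g$. Denote the moment maps of $P$ by $\alpha\colon P\rar M$ and $\alpha'\colon P\rar M'$, so that $\alpha'$ is a principal $\G$-bundle and $\alpha$ a principal $\G'$-bundle, and each is invariant under the opposite action. First I would form the balanced product $N':=P\times_\G N=(P\times_M N)/\G$, where $P\times_M N=\{(p,n)\mid \alpha(p)=\pi_0(n)\}$ (a manifold since $\alpha$ is a submersion) and $\G$ acts freely and properly by $(p,n)\cdot g=(p\cdot g,\,g^{-1}\cdot n)$. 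This carries a well-defined moment map $\pi_0'([p,n]):=\alpha'(p)$ and a $\G'$-action $g'\cdot[p,n]:=[g'\cdot p,n]$, so that $\h':=\G'\ltimes N'$ is the candidate, with $\pi'\colon\h'\rar\G'$ the projection. The identity $\h'=\G'\times_{M'}N'$ holds by construction, so $\h'$ is automatically an action resolution once $(\h',\pi')$ is shown to be a resolution.

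Second, I would construct the Morita bibundle between $\h'$ and $\h$ directly as $Q:=P\times_M N$, with moment maps $\beta'(p,n):=[p,n]\in N'$ and $\beta(p,n):=n\in N$. The right $\h$-action is $(p,n)\cdot(g,m):=(p\cdot g,m)$ (defined when $g\cdot m=n$) and the left $\h'$-action is $(g',[p,n])\cdot(p,n):=(g'\cdot p,n)$; a direct check shows these are well-defined, commute, and are compatible with the moment maps. The content to verify is biprincipality, and here the key observation is that the two fibrations of $Q$ have exactly the expected orbits: the fibres of $\beta'$ are the $\G$-classes, i.e.\ the $\h$-orbits, while the fibres of $\beta$ are the $\G'$-orbits inside the $\alpha$-fibres of $P$, i.e.\ the $\h'$-orbits. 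Freeness and properness of both actions then descend from the corresponding properties of the $\G$- and $\G'$-actions on the biprincipal $P$, and $\beta,\beta'$ are submersions because $\alpha,\alpha'$ are. This establishes a Morita equivalence $\h'\simeq\h$. Regularity of $\h'$ is then essentially for free: regularity is a Morita invariant (the transverse dimension of the orbit foliation is preserved), so $\G'$ acts on $N'$ with orbits of constant dimension.

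The remaining, and main, point is to check that $(\h',\pi')$ is genuinely a resolution, i.e.\ that $\pi'$ is surjective, proper, and an isomorphism almost everywhere. Surjectivity of $\pi_0'$, hence of $\pi'$, is immediate from surjectivity of $\alpha'$ and $\pi_0$. The hard part is properness of $\pi_0'\colon N'\rar M'$, since properness of the moment map is not obviously inherited through the associated-bundle construction. Here I would argue pointwise using local sections: near $m'\in M'$ choose a local section $\sigma$ of the principal $\G$-bundle $\alpha'$, normalise each class over $\alpha'(p)$ to the form $[\sigma(\alpha'(p)),\tilde n]$ with $\tilde n=g\cdot n$ for the unique $g\in\G$ with $p=\sigma(\alpha'(p))\cdot g$, and observe that $\pi_0(\tilde n)=\alpha(\sigma(\alpha'(p)))$. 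As the base point ranges over a compact set the normalised points $\tilde n$ are confined to the $\pi_0$-preimage of a compact set, so properness of $\pi_0$ delivers convergent subsequences and hence compactness of preimages of compacta under $\pi_0'$. Finally, the almost-everywhere isomorphism property transfers through the Morita equivalence: over the dense open saturated locus where $\pi_0$ is a diffeomorphism, the bibundle $Q$ restricts to (a copy of) $P$, so $\pi'$ restricts to an isomorphism there. Assembling these facts shows that $(\h',\pi')$ is an action resolution of $\G'$ which is Morita equivalent to $\h$, as claimed. I expect the properness step to be the principal technical obstacle, the bibundle bookkeeping of the second step being routine but requiring care with the source/target conventions.
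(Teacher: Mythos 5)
Your construction is correct and is essentially the paper's proof: you form the same balanced product $N'=(P\times_M N)/\G$ and use the same bibundle $Q=P\times_M N$, only packaging $\h'$ directly as the action groupoid $\G'\ltimes N'$ where the paper defines $\h'=(Q\times_N Q)/\h$ and then proves that isomorphism afterwards. The only divergences are in the bookkeeping: the paper checks regularity by exhibiting an explicit isomorphism of isotropy groups and deduces properness of $\pi'$ from properness of the equivariant map $Q\rar P$, whereas you invoke Morita invariance of regularity and a local-section normalisation for properness --- both of which are valid.
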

\begin{proof}
	Let \(Q:=P\times_M N\). Since \(P\) is a Morita equivalence bibundle, we obtain that \(\G'\) is isomorphic to \(P\times_M P/\G\) and \(M'\) to \(P/\G\). Now let \(\h':=Q\times_N Q/\h\) and  \(N':=Q/\h\). As \(\h\) acts freely and properly on \(Q\) by the action \((p,n)\cdot h = (p\cdot\pi(h), s(h))\), the above defined \(\h'\) and \(N'\) are smooth manifolds. Also, it is clear that \(\h'\rrar N'\) has the structure of a Lie groupoid and by construction \(Q\) is a Morita equivalence bibundle between \(\h'\) and \(\h\).

	Define \(\pi'\colon\h'=(Q\times_N Q)/\h\rar\G'\simeq (P\times_M P)/\G\) as the map induced by \(Q\rar P\). It is easy to check that \(\pi'\) is well-defined and surjective. Moreover, since \(N\rar M\) is a proper map, so is \(Q\rar P\). This immediately implies that \(\pi'\) is proper. We are left to show that \(\h'\) is a regular groupoid, which implies that it is a resolution, and to show that it is an action resolution.
	
	The morphism 
	\begin{align*}
	&
	\h'\rar \G'\times_{M'} N',
	&
	[p_2,p_1,n]_\h\mapsto ([p_2,p_1]_\G, [p_1,n]_\h) .
	\end{align*}
	has well-defined inverse defined by
	\begin{align*}
	&
	\G'\times_{M'} N' \rar \h',
	&
	([p_2,p_1]_\G,[p_0,n]_\h)\mapsto [p_2\cdot g,p_0,n]_\h,
	\end{align*}
	with \(g\) such that \(p_1\cdot g = p_0\).  Hence if we are able to show that \(\h'\) is regular, so that it is a resolution, we can immediately conclude that it is an action resolution.
	
	To show that \(\h'\) is regular, we will show that it has isomorphic isotropy groups to \(\h\). A groupoid is regular if and only if the dimension of its isotropy groups is constant, so \(\h\) being regular then implies that \(\h'\) is regular. Let \(n_0'=[q_0]\in N'\) be given for \(q=(p_0,n_0)\in Q\) and let \(\xi\colon\h_{n_0}\rar \h'_{n_0'}\) be defined as \(h\mapsto [q_0\cdot h,q_0]\). It is an injective group morphism. Hence we are left to show that it is surjective.
	Let \([q_2,q_1]\in \h'_{n_0}\) be given. That is, \([q_2]=[q_1]=[q_0]\) and hence there exists \(h_i\in \h\) such that \(q_i\cdot h_i = q_0\). Note that this implies that \(h_i\in \h_{n_0}\) and \([q_2,q_1] = [q_2\cdot h_1, q_0] =\xi(h_1^{-1}h_2)\). So indeed, the proposition holds.
	 
\end{proof}

%END OF TEX FILE
%
\section{Blow-up and desingularization}
\label{sec:blow-up}
In this section we will define the blow-up of a proper Lie groupoid \(\G\rrar M\) along a saturated submanifold \(S\subset M\) in an explicit manner. When we choose \(S\) to be a stratum of the stratification by dimension of leaves, the resulting groupoid will be `more regular'. In particular, after blowing-up a finite amount of times, we will end up with a resolution of our original groupoid. This will then show that resolutions always exist. Our blow-up construction agrees with the one by Debord and Skandalis in \cite{DebordSkandalis17}. Let us start by recalling the real-projective blow-up of a manifold.
\begin{defn}
	Let \(M\) be a manifold, \(S\subset M\) a closed submanifold and \(\phi\colon V\subset NS \rar U \subset M\) a tubular neighbourhood of \(S\). Then, the \emph{blow-up \(\wt{M}\) of \(M\) along \(S\)} is given by the manifold \(\wt(M\), where:
	\begin{align}
	&
	\wt{M} := M\setminus S \cup_\phi \wt{V};
	\\
	&
	\wt{V} := \{(v,l)\in V\times \mathbb{P}(NS)\,|\, v\in  l \}.
	\nonumber
	\end{align}
\end{defn}

Note that the isomorphism class of the blow-up is independent of the choice of tubular neighbourhood. For later use, we will prove two lemmas on the behaviour of blow-up with respect to maps and to fibre products.

\begin{lem}
\label{lem:blow-up-maps}
	Let \(f\colon N\rar M\) be a map and let \(S_N\subset N\), \(S_M\subset M\) be closed submanifolds such that  \(S_N = f^{-1}(S_M)\). Then \(f\) lifts to a map \(\wt{f}\colon\wt{M}\rar\wt{N}\), where the blow-ups are with respect to \(S_M\) and \(S_N\) respectively. Moreover, if \(f\) is a submersion, then so is its lift.
\end{lem}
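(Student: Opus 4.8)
The plan is to define \(\wt{f}\) as the restriction of \(f\) away from the exceptional divisor and then to show it extends smoothly across it; throughout I think of the lift as covering \(f\), so that \(\pi_M\circ\wt{f}=f\circ\pi_N\) for the blow-down maps \(\pi_N\colon\wt{N}\rar N\) and \(\pi_M\colon\wt{M}\rar M\). First I would record the elementary consequences of the hypothesis \(S_N=f^{-1}(S_M)\). It gives \(f(S_N)\subseteq S_M\), so \(f\) restricts to \(f|_{S_N}\colon S_N\rar S_M\), and since \(df\) then maps \(TS_N\) into \(f|_{S_N}^*TS_M\) it descends to a vector bundle morphism \(\overline{df}\colon NS_N\rar NS_M\) covering \(f|_{S_N}\). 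It also gives \(f(N\setminus S_N)\subseteq M\setminus S_M\), so on the open set \(N\setminus S_N\subset\wt{N}\) I simply set \(\wt{f}=f\), which lands in \(M\setminus S_M\subset\wt{M}\). The claim to prove is then that this extends to a smooth map \(\wt{N}\rar\wt{M}\) whose restriction to the exceptional divisor \(E_N=\mathbb{P}(NS_N)\) is \((x,l)\mapsto(f(x),\overline{df}(l))\).

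The step I would isolate is that \(\overline{df}\) is fibrewise injective, so that \(l\mapsto\overline{df}(l)\) is a well-defined map of projectivised normal bundles. This is exactly where the preimage hypothesis enters: a nonzero normal vector in the kernel of \(\overline{df}\) would be a direction leaving \(S_N\) along which \(f\) is tangent to \(S_M\) to first order, which is excluded by \(S_N=f^{-1}(S_M)\) in the transverse (clean) situation; I note in particular that when \(f\) is a submersion this is automatic, and in fact \(\overline{df}\) is then an isomorphism, since \(\codim S_N=\codim S_M\) and \(\overline{df}\) is onto. Granting injectivity, I would verify smoothness of the extension in blow-up charts: writing the normal component of \(f\) in an adapted chart \(w=r\hat{\xi}\) as \(f_{\mathrm{norm}}(u,w)=r\,h(u,\xi,r)\), where Hadamard's lemma gives \(h(u,\xi,0)=A(u)\hat{\xi}\neq 0\) with \(A(u)\) the matrix of \(\overline{df}\), one divides out the common factor \(r\) and reads off that both the base point and the projective direction of \(\wt{f}\) depend smoothly on \((u,\xi,r)\), and that \(\{r=0\}\) maps into the divisor of \(\wt{M}\). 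I expect this chart computation, and in particular the bookkeeping showing that it is independent of the chosen tubular neighbourhoods, to be the main obstacle.

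For the submersion statement I would sidestep the chart computation and argue by local normal form. If \(f\) is a submersion, then near any point of \(S_N\) and its image I can choose coordinates in which \(f\) is the projection \(\R^{n-m}\times\R^{m}\rar\R^{m}\), with \(S_M\) a coordinate subspace and \(S_N=f^{-1}(S_M)\) the corresponding one. Since blowing up a product \(X\times M\) along \(X\times S_M\) yields \(X\times\wt{M}\) — the normal bundle being pulled back from the second factor, so that its projectivisation is \(X\times\mathbb{P}(NS_M)\) — in these coordinates \(\wt{f}\) becomes the projection \(X\times\wt{M}\rar\wt{M}\), which is manifestly a submersion. Patching these local models, together with the fact that on \(\wt{N}\setminus E_N=N\setminus S_N\) the lift is just the submersion \(f\), shows that \(\wt{f}\) is a submersion everywhere. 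This product description also re-proves existence and smoothness of the lift in the submersion case, giving an independent check on the general construction.
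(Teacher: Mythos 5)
Your proposal is correct on the cases the paper actually uses, but it reaches the conclusion by a genuinely different route. The paper's proof chooses metrics on $N$ and $M$ adapted to $f$ (so that $\eta_N(X,Y)=\eta_M(df(X),df(Y))$ on $(\ker df)^\bot$) and takes the tubular neighbourhoods of $S_N$ and $S_M$ to be exponential for these metrics; then $f$ intertwines the tubular neighbourhoods with $df$, so near the centres $f$ literally \emph{is} the linear map $df$ on the normal bundles and the lift is just $(v,[w])\mapsto(df(v),[df(w)])$, glued to $f$ outside. Your argument replaces this global metric normalization by a chart computation: you define the lift on the divisor via the induced map $\overline{df}\colon NS_N\rar NS_M$ and verify smoothness by factoring out $r$ \`a la Hadamard, and you prove the submersion claim separately via the local normal form of a submersion together with the fact that blowing up a product along a product gives a product. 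What the paper's approach buys is that no chart bookkeeping is needed (the compatibility of tubular neighbourhoods is absorbed into the metric choice, exactly the device reused later in \autoref{lem:blow-up-fibred-product}); what yours buys is independence from any metric and, in the submersion case, a completely elementary and self-contained argument. One point in your favour: you explicitly isolate the fibrewise injectivity of $\overline{df}$ as the place where the hypothesis $S_N=f^{-1}(S_M)$ must do its work, and correctly observe that this injectivity is automatic (indeed $\overline{df}$ is an isomorphism) when $f$ is a submersion but is \emph{not} a formal consequence of $S_N=f^{-1}(S_M)$ for an arbitrary smooth map --- the paper's expression $[df(w)]$ silently presupposes the same thing. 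Since the lemma is only ever applied to submersions (source maps, moment maps), your hedge does not affect anything downstream, but it is a real caveat on the statement as literally written. (You also silently correct the direction of the lift to $\wt{f}\colon\wt{N}\rar\wt{M}$, which is clearly what is intended.)
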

\begin{proof}
	Pick metrics on \(M\) and \(N\) such that \(f\) satisfies:
	\begin{align*}
	&
	\eta_N(X,Y) = \eta_M(df(X),df(Y)),
	&
	\forall X,Y\in (\ker df)^\bot.
	\end{align*}
	Note that if \(f\) is a submersion, this is the definition of \(f\) being a Riemannian submersion. Let \(j_M\colon NS_M\rar M\) and \(j_N\colon NS_N\rar N\) be tubular neighbourhoods obtained using the exponential map of the metrics. Then we get that:
	\begin{align*}
	j_N\circ df = f\circ j_M,
	\end{align*}
	since \(f_*\) sends geodesics to geodesics. Hence we can define \(\wt{df}\) as:
	\begin{align*}
	\wt{df}(v,[w]):=(df(v),[df(w)]).
	\end{align*}
	Gluing \(\wt{df}\) with \(f\) along the tubular neighbourhood gives \(\wt{f}\colon \wt{M}\rar\wt{N}\). The last claim follows immediately.
\end{proof}

\begin{lem}
	\label{lem:blow-up-fibred-product}
	Let \(f_A\colon A\rar C\) and \(f_B\colon B\rar C\) be transverse maps and let \(S_A\subset A\), \(S_B\subset B\) be smooth submanifolds such that \(S_C:=f_A(S_A) = f_B(S_B)\) is also a smooth manifold. Then:
	\begin{align*}
	\widetilde{A\times_C B} = \wt{A}\times_{\wt{C}}\wt{B}.
	\end{align*}
\end{lem}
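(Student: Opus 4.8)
The plan is to reduce the statement to a fibrewise linear-algebra computation in the normal bundles, in the same spirit that the proof of \autoref{lem:blow-up-maps} reduces the lifting problem to the differential \(df\). Throughout I regard the centre of the blow-up of \(A\times_C B\) as \(S:=S_A\times_{S_C}S_B\); transversality of \(f_A\) and \(f_B\) guarantees that \(A\times_C B\) is a manifold and that \(S\) is a smooth submanifold, while the clean-intersection conditions \(S_A=f_A^{-1}(S_C)\) and \(S_B=f_B^{-1}(S_C)\) (which are exactly what makes the lifts \(\wt{f_A},\wt{f_B}\) of \autoref{lem:blow-up-maps} well defined, so that the right-hand side \(\wt A\times_{\wt C}\wt B\) makes sense in the first place) ensure that the locus blown up on the right-hand side is precisely \(S\): a point \((a,b)\) has \(a\in S_A\) iff \(f_A(a)\in S_C\) iff \(f_B(b)\in S_C\) iff \(b\in S_B\).

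First I would fix compatible metrics: choose a metric on \(C\) and then metrics on \(A\) and \(B\) turning both \(f_A\) and \(f_B\) into maps of the Riemannian type used in \autoref{lem:blow-up-maps}. With these choices the tubular neighbourhoods obtained from the exponential maps intertwine \(f_A\) (resp.\ \(f_B\)) with the fibrewise-linear normal derivative \(\overline{df_A}\colon NS_A\rar NS_C\) (resp.\ \(\overline{df_B}\colon NS_B\rar NS_C\)), and the clean-intersection conditions say precisely that these normal derivatives are fibrewise injective. Since the blow-up is the identity away from the centre and is built from the tubular chart near it, it suffices to prove the identity in the linear model, i.e.\ to produce a canonical isomorphism over \(S\) identifying the two projectivised-normal-bundle constructions.

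The heart of the argument is then the following fibrewise statement. For injective linear maps \(\alpha\colon E_A\rar E_C\) and \(\beta\colon E_B\rar E_C\), put \(E:=E_A\times_{E_C}E_B\). One first checks the normal-bundle identification \(NS\cong NS_A\times_{NS_C}NS_B\) by a direct diagram chase on \(T(A\times_C B)=TA\times_{TC}TB\), so that fibrewise \(E\) is the normal space to \(S\). One then verifies
\begin{align*}
\wt E \;\cong\; \wt{E_A}\times_{\wt{E_C}}\wt{E_B},
\end{align*}
where the maps on the right are the lifts of \(\alpha,\beta\). The isomorphism sends \((w,\ell)\), with \(w=(u,v)\in E\) and \(\ell\subset E\) the line through \(w\), to the pair \(\bigl((u,\ell_A),(v,\ell_B)\bigr)\), where \(\ell_A,\ell_B\) are the images of \(\ell\) under the projections \(E\rar E_A,E_B\); injectivity of \(\alpha,\beta\) ensures \(\ell_A,\ell_B\) are genuine lines and that \(\wt\alpha(\ell_A)=\wt\beta(\ell_B)\). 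Conversely, given a compatible pair \(((u,\ell_A),(v,\ell_B))\), the condition \(\wt\alpha(u,\ell_A)=\wt\beta(v,\ell_B)\) in \(\wt{E_C}\) forces \(\alpha\ell_A=\beta\ell_B=:\ell_C\), so \(\ell:=\ell_A\times_{\ell_C}\ell_B\) is a line in \(E\) containing \((u,v)\), giving the inverse. These assignments are visibly smooth and natural in the base point of \(S\), so they glue to a bundle isomorphism and hence to a diffeomorphism of blow-ups compatible with the blow-down maps and with the identity on the exterior.

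I expect the main obstacle to be the exceptional-divisor matching in the last step, namely checking that \(\mathbb{P}(E)\cong\mathbb{P}(E_A)\times_{\mathbb{P}(E_C)}\mathbb{P}(E_B)\) together with the incidence (tautological) conditions. This is exactly the point where fibrewise injectivity of \(\overline{df_A},\overline{df_B}\) is indispensable: without it the projectivised maps \(\wt\alpha,\wt\beta\) degenerate on \(\mathbb{P}(\ker\alpha)\), the lifts \(\wt{f_A},\wt{f_B}\) are undefined, and the two exceptional divisors no longer correspond. Once one observes that the transversality and clean-intersection hypotheses force these normal injectivity conditions, the remaining line-matching is the routine verification sketched above.
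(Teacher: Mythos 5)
Your proposal follows essentially the same route as the paper's proof: identify \(N(S_A\times_{S_C}S_B)\) with \(NS_A\times_{NS_C}NS_B\) via \(T(A\times_CB)\simeq TA\times_{TC}TB\), choose compatible tubular neighbourhoods, and then match the two blow-ups in the linear model, extending by the identity outside. The only difference is that you spell out the fibrewise line-matching on the exceptional divisors (and the normal-injectivity needed for it), which the paper leaves implicit as the top arrow of its commuting diagram; this is a correct filling-in of the same argument rather than a different one.
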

\begin{proof}
	Since \(T(A\times_C B) \simeq TA\times_{TC} TB\), we get that \(N(S_A\times_{S_C} S_B) \simeq NS_A\times_{NS_C} NS_B\). Therefore, a choice of tubular neighbourhoods for \(S_A\) and \(S_B\) and \(S_C\) which are compatible, give a tubular neighbourhood \(j\) for \(S_A\times_{S_C} S_B\) by:
	\begin{align*}
	&
	j\colon NS_A\times_{NS_C} NS_B \rar A\times_C B,
	\\
	&
	(v_A,v_B)\mapsto (j_A(v_A), j_B(v_B)).
	\end{align*}
	Using this tubular neighbourhood for \(\widetilde{A\times_B C}\) gives the wanted isomorphism when we   extend it with the identity outside the tubular neighbourhoods, as all the maps in the following diagram commute:
	\begin{center}
		\begin{tikzcd}
			\widetilde{N(S_A\times_{S_C}S_B)} \arrow{d}{\pi} \arrow{r}{\simeq}
			&
			\wt{NS_A} \times_{\wt{NS_C}} \wt{NS_C} \arrow{d}{\pi_A\times_{\pi_C}\pi_B}
			\\
			N(S_A\times_{S_C} S_B) \arrow{d}{j} \arrow{r}{\simeq}
			&
			NS_A \times_{NS_C} NS_B \arrow{d}{j_A\times_{j_C} j_B  }
			\\
			A\times_C B \arrow{r}{id}
			&
			A\times_C B
		\end{tikzcd}
	\end{center}    
	This completes the proof.
\end{proof}

%END OF TEX FILE
\subsection{Proper groupoids}
\label{subsec:proper groupoids}
Let \(\G\rrar M\) be a proper Lie groupoid and let \(S\subset M\) be a closed saturated submanifold. In this subsection, we will use the blow-up construction to construct a resolution of \(\G\). Let \(S\subset U \subset M\) and \(S\subset V\subset NS\) be a tubular neighbourhood of \(S\) such that  \(\G|_U \simeq (\G_S\times_S NS)|_V\), which exists by the linearization theorem,  \autoref{thm:lin}, and let \(\wt{M}\) be the blow-up of \(M\) along \(S\), using this tubular neighbourhood.

\begin{lem}
	The Lie groupoid \(\G\) acts on \(\wt{M}\).
\end{lem}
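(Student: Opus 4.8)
The plan is to produce the action with moment map the blow-down map $\pi_0\colon\wt{M}\rar M$, by describing it separately on the two pieces out of which $\wt{M}$ is glued and then checking that the two descriptions match smoothly across the exceptional divisor. The starting observation is that $M$ itself carries a tautological $\G$-action with moment map $\mathrm{id}_M$, namely $\G\times_M M\cong\G\rar M$, $g\mapsto t(g)$. Since $S$ is saturated, its complement $M\setminus S$ is invariant, so this restricts to a genuine $\G$-action on $M\setminus S$; this is the action on the first piece. On the exceptional divisor $\pi_0^{-1}(S)\cong\mathbb{P}(NS)$ I would instead use the normal representation encoded in $N_S(\G)=\G_S\times_S NS$: an arrow $g\in\G_S$ with $s(g)=x$, $t(g)=y$ acts as the linear isomorphism $N_xS\rar N_yS$ induced by $d_gt$ and $d_gs$, and because this is fibrewise linear it sends lines to lines and descends to a map $\mathbb{P}(N_xS)\rar\mathbb{P}(N_yS)$. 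Both of these partial actions are defined globally and independently of any tubular neighbourhood.

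Next I would assemble these into a single set-theoretic map $\theta\colon\G\times_M\wt{M}\rar\wt{M}$ using the gluing $\wt{M}=M\setminus S\cup_\phi\wt{V}$. A point $(v,l)\in\wt{V}$ with $v\neq 0$ is identified with $\phi(v)\in M\setminus S$, so for such points the action is already dictated by the tautological action on $M\setminus S$; a point $(0,l)$ lies on the exceptional divisor, and there the action is $g\cdot(0,l)=(0,g\cdot l)$ given by the projectivized normal representation. Saturatedness of $S$ guarantees that $t(g)\in S$ whenever $s(g)\in S$ and $t(g)\notin S$ whenever $s(g)\notin S$, so $\theta$ respects the splitting of $\wt{M}$ and the identity $\pi_0(\theta(g,w))=t(g)$ holds on both pieces, making $\pi_0$ the moment map.

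The step requiring care, and the main obstacle, is \emph{smoothness of $\theta$ across the exceptional divisor}, where the tautological action on base points must degenerate smoothly into the linear projective action on lines. This is exactly where the linearization is used: by \autoref{thm:lin} (taking $U,V$ saturated via \autoref{cor:lin-sat}) we have a groupoid isomorphism $\G|_U\simeq N_S(\G)|_V=(\G_S\times_S NS)|_V$ intertwining the tautological action on $U$ with the fibrewise linear normal action on $V\subset NS$. On $V$ the action is therefore literally linear, $g\cdot v = (d_gt\circ(d_gs)^{-1})(v)$ on normal directions, and its lift to the blow-up $\wt{V}$ is $g\cdot(v,l)=(g\cdot v,\,g\cdot l)$, which is manifestly smooth in $(v,l)$ since $g$ acts linearly. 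On $V\setminus S$ this lifted map agrees, under $v\mapsto(v,[v])$, with the tautological action on $U\setminus S$, so the two descriptions coincide on the overlap and the glued map $\theta$ is smooth everywhere, including across $\mathbb{P}(NS)$. Everything else — well-definedness of the projectivized action and the moment-map identity — is then routine.
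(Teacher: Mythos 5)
Your proposal is correct and follows essentially the same route as the paper: both reduce via the linearization $\G|_U\simeq(\G_S\times_S NS)|_V$ to the fibrewise linear action of $\G_S$ on $V$, lift it to $\wt{V}$ by the formula $g\cdot(v,[w])=(g\cdot v,[g\cdot w])$, and check that this glues with the tautological action on $M\setminus S$ along the overlap. The only difference is presentational — you emphasize the moment map $\pi_0$ and the intrinsic description of the exceptional-divisor action, whereas the paper phrases the reduction through the isomorphism $\G|_U\times_U\wt{V}\simeq\G_S\times_S\wt{V}$ — but the substance is identical.
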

\begin{proof}
	Since \(\G\times_M (M\setminus S) \simeq \G|_{M\setminus S}\) we only have to define the action of \(\G\) on \(\wt{V}\) and check that it is compatible along the tubular neighbourhood \(\phi\). Note that:
	\begin{align*}
		\G|_U\times_U \wt{V} 
        &
        \simeq (\G_S\times_S NS)|_V \times_U \wt{V} 
        \simeq (\G_S\times_S V) \times_U \wt{V}
        \\
        &
        \simeq \G_S\times_S \wt{V}.
	\end{align*}
	Therefore, it is enough to show that \(\G_S\) acts on \(\wt{V}\), which is due to the action:
	\begin{align}
		g_s\cdot(v,[w]):=(g_s\cdot v, [g_s\cdot w]),
	\end{align}
	with \([w]\) the line through \(w\neq 0\). Note that if \(v\neq 0\), we have that \([w]=[v]\) and hence
	\begin{align*}
		\phi(g_s\cdot(v,[v])) 
        & 
		= \phi(g_s\cdot v, [g_s\cdot v]) 
        = \phi(g_s\cdot v)
        = g_s\cdot \phi(v),
	\end{align*}
	using the linearization. Therefore, the action is compatible with the action on \(U\setminus S\) of \(\G\).
\end{proof}

Another way to construct this action groupoid is to blow-up \(\G\) itself along the full sub-groupoid \(\G_S:=s^{-1}(S)=t^{-1}(S)\rrar S\).

\begin{lem}
	The action groupoid \(\G\times_M\wt{M}\) is isomorphic to the blow-up of \(\G\) along \(\G_S\).
\end{lem}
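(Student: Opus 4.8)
The plan is to construct an explicit isomorphism $\Phi\colon\wt\G\rar\G\times_M\wt M$ of Lie groupoids over $\wt M$ and to check it on the two charts $\G\setminus\G_S$ and the tubular model, describing the blow-up of $\G$ along $\G_S$ concretely by means of the linearization theorem. The first ingredient is the normal bundle of $\G_S$ in $\G$: since $s\colon\G\rar M$ is a submersion with $\G_S=s^{-1}(S)$, the differential $ds$ gives an isomorphism $N\G_S\simeq s^*NS\simeq\G_S\times_S NS$, and hence $\mathbb{P}(N\G_S)\simeq\G_S\times_S\mathbb{P}(NS)$. By \autoref{thm:lin} I may choose the tubular neighbourhood $V_\G:=\G_S\times_S V$ of $\G_S$ adapted to the linearization $\G|_U\simeq(\G_S\times_S NS)|_V$, and unwinding the definition of the projective blow-up then yields $\wt V_\G\simeq\G_S\times_S\wt V$, where $\wt V$ is the tubular model of $\wt M$ used in the preceding lemma. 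Thus $\wt\G=(\G\setminus\G_S)\cup_{\phi_\G}(\G_S\times_S\wt V)$.

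Next I would match this with $\G\times_M\wt M$. Away from the exceptional divisor both spaces are canonically $\G\setminus\G_S$. Over $U$, using the same linearization together with the fact that $\phi$ is a diffeomorphism onto $U$, one computes $\G|_U\times_U\wt V\simeq\G_S\times_S\wt V$, which matches $\wt V_\G$. Gluing these identifications (the identity on $\G\setminus\G_S$ and the above isomorphism on the tubular piece) produces a diffeomorphism $\Phi\colon\wt\G\rar\G\times_M\wt M$; concretely $\Phi(\tilde g)=(\pi_\G(\tilde g),\wt s(\tilde g))$, where $\pi_\G$ is the blow-down and $\wt s\colon\wt\G\rar\wt M$ the lift of the source provided by \autoref{lem:blow-up-maps}. (This is the statement that blow-up commutes with pullback along the submersion $s$.)

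It remains to see that $\Phi$ is an isomorphism of groupoids. By \autoref{lem:blow-up-maps} the source $s$, target $t$ and unit $u$ lift, using $\G_S=s^{-1}(S)=t^{-1}(S)$ and $u^{-1}(\G_S)=S$; and by \autoref{lem:blow-up-fibred-product} applied to the target and source maps one gets $\wt{\G\times_{t,s}\G}\simeq\wt\G\times_{\wt M}\wt\G$, so that the multiplication lifts as well and $\wt\G$ is a Lie groupoid over $\wt M$. The map $\Phi$ covers the identity on $\wt M$ and intertwines the source maps, since the source of the action groupoid is the projection and $\Phi$ records $\wt s(\tilde g)$ in the second slot. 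The essential point is that $\Phi$ intertwines the targets, i.e.\ $\wt t(\tilde g)=\pi_\G(\tilde g)\cdot\wt s(\tilde g)$: the lift $\wt t$ is built in \autoref{lem:blow-up-maps} from the map $[w]\mapsto[(dt)(w)]$ on $\mathbb{P}(N\G_S)$, which under $N\G_S\simeq s^*NS$ is exactly the rule $t(g,[(d_gs)(v)])=[(d_gt)(v)]$ defining the target of $N_S(\G)$, hence exactly the $\G_S$-action $g_s\cdot(v,[w])=(g_s\cdot v,[g_s\cdot w])$ on $\wt V$ from the preceding lemma. Granting this, compatibility with multiplication is formal: $\pi_\G$ and $\wt s$ preserve products, so that $\Phi(\tilde g_2\tilde g_1)=\Phi(\tilde g_2)\Phi(\tilde g_1)$, and composability on the two sides agrees precisely because of the target identity.

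I expect the main obstacle to be this last compatibility between the lifted target and the action, namely checking that the normal lift of $t$ produced by \autoref{lem:blow-up-maps} coincides with the $\G_S$-action used to define the $\G$-action on $\wt M$. Both are governed by $N\G_S\simeq s^*NS$ and the differential $dt$, so they do agree, but making the identification clean requires tracking the isomorphism $N\G_S\simeq s^*NS$, the linearization $\G|_U\simeq(\G_S\times_S NS)|_V$, and the description of the target of $N_S(\G)$ simultaneously. The diffeomorphism statement itself is comparatively routine once the tubular models are matched, being the identity away from the exceptional divisor and the fibrewise blow-up $\G_S\times_S\wt V$ over it.
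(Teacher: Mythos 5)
Your proposal is correct and follows essentially the same route as the paper: the map is the same $\pi_\G\times\wt s$, and the key identification is the same fibrewise statement $\widetilde{N\G_S}\simeq\G_S\times_S\widetilde{NS}$ coming from $N\G_S\simeq s^*NS$. You go further than the paper's (quite terse) proof by explicitly verifying that the lifted target coincides with the $\G_S$-action on $\wt V$ and hence that the diffeomorphism intertwines the groupoid structures — a point the paper leaves implicit — which is a welcome addition rather than a deviation.
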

\begin{proof}
	Let \(\wt{\G}\) denote the blow-up of \(\G\) along \(\G_S=s^{-1}(S)\). Using  \autoref{lem:blow-up-maps}, we see that we can lift \(s\) to \(\wt{s}\colon\wt{\G}\rar\wt{M}\) and denote the blow-down map of \(\wt{\G}\) by \(\pi_\G\). Let \(\xi\colon\wt{\G}\rar \G\times_M \wt{M}\) be given by \(\pi_\G\times \wt{s}\). This map is an isomorphism. Indeed, one checks that it is an isomorphism away from \(\G_S\). Around \(\G_S\), we have an isomorphism of vector bundles:
	\begin{align*}
		N\G_S\simeq \G_S\times_S NS.
	\end{align*}
	Since the blow-up procedure is fiberwise, we also get:
	\begin{align*}
		\widetilde{N\G_S}\simeq \G_S\times_S \widetilde{NS}.
	\end{align*}
	Hence, when restricting both to a neighbourhood of \(\G_S\), we get that \(\wt{\G}\simeq \G\times_M \wt{M}\).
\end{proof}

\begin{defn}
	The \emph{blow-up of \(\G\) along \(S\)} is defined as the action groupoid
	\begin{align}
		\wt{\G}:=\G\times_M \wt{M} \rrar \wt{M}.
	\end{align}
\end{defn}

\begin{prop}
	\label{prop:blow-up-proper}
	Let \(\G\) be an proper Lie groupoid and let \(S\subset S^j\) with \(j=\max \{0\leq i \leq \dim(M) \,|\, s^i\neq\emptyset\} \) be a `most singular' stratum . Then the blow-up \(\wt{\G}\) of \(\G\) along \(S\) is a proper Lie groupoid. Moreover, the blow-down map \(\pi\colon\wt{\G}\rar\G\) is a surjective proper map, which is an isomorphism almost everywhere and the leaves \(\wt{L}\subset \pi^{-1}(S)\) satisfy:
	\begin{itemize}
		\item \(\pi(\wt{L})\) is a leaf in \(S\);
        \item \(\dim(\wt{L}) > \dim(\pi(\wt{L}))\).
	\end{itemize}
\end{prop}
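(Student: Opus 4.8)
The plan is to verify the four assertions in turn, saving essentially all the work for the strict inequality on leaf dimensions. Throughout I use that $S$ is a \emph{closed} submanifold: this is precisely property (1) of \autoref{prop:properties-dim-strat}, valid because $j$ is maximal, and it is what makes the blow-up $\wt M$, and hence $\wt\G=\G\times_M\wt M$, well-defined. I would first prove properness of $\wt\G$ directly, rather than through the resolution lemma, since regularity of $\wt\G$ is not yet available. Writing $\pi_0\colon\wt M\rar M$ for the blow-down, $\wt\G$ is the action groupoid with $(\wt s,\wt t)(g,\tilde m)=(\tilde m,g\cdot\tilde m)$. Given a compact $K\subset\wt M\times\wt M$ with image projections $K_1,K_2$, any $(g,\tilde m)$ in its preimage satisfies $\tilde m\in K_1$ and $(s(g),t(g))=(\pi_0(\tilde m),\pi_0(g\cdot\tilde m))\in\pi_0(K_1)\times\pi_0(K_2)$, using that $\pi_0$ is $\G$-equivariant (shown in the lemma that $\G$ acts on $\wt M$); properness of $\G$ then confines $g$ to a compact set, so the preimage is a closed subset of a compact set. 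Thus action groupoids of proper groupoids are proper.

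Next I would treat $\pi\colon\wt\G\rar\G$. Under the identification of $\wt\G$ with the blow-up of $\G$ along $\G_S$, the map $\pi$ is the blow-down: it is the identity off $\G_S$ and has compact (projective-space) fibres over $\G_S$, hence is proper and surjective. That $\pi$ is an isomorphism almost everywhere follows because it restricts to a diffeomorphism $\wt\G|_{M\setminus S}\rar\G|_{M\setminus S}$, and $S$ has codimension $\geq 2$ in $M$ (so $\G_S$ has positive codimension in $\G$) by property (3) of \autoref{prop:properties-dim-strat}.

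For the leaf statements, let $\wt L\subset\pi_0^{-1}(S)=\mathbb{P}(NS)$ be a leaf, i.e.\ an orbit $\wt\G\cdot(0,l)$ with $l\in\mathbb{P}(N_xS)$ over some $x\in S$. Since $\pi_0$ is $\G$-equivariant and surjective it carries orbits onto orbits, so $\pi_0(\wt L)=\G\cdot\pi_0(0,l)=L_x\subset S$ is a leaf, which is the first bullet. For the second, I would compute in the linear model $\G|_U\simeq(\G_S\times_S NS)|_V$ used for the blow-up. There $\pi_0|_{\wt L}\colon\wt L\rar L_x$ is the orbit map, a fibre bundle whose fibre over $x$ is the isotropy orbit $\G_x\cdot l\subset\mathbb{P}(N_xS)$, whence $\dim\wt L=\dim L_x+\dim(\G_x\cdot l)$. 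It therefore suffices to show this isotropy orbit is positive-dimensional, equivalently that $\G_x^\circ$ fixes no point of $\mathbb{P}(N_xS)$.

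This last point is the crux, and the main obstacle. By \autoref{lem:dim-strata} the stratum is cut out so that $N_xS\simeq N_xL_x/(N_xL_x)^{\G_x^\circ}$; in particular the normal $\G_x^\circ$-representation $N_xS$ has no nonzero fixed vector. If $\G_x^\circ$ fixed a line $[w]\in\mathbb{P}(N_xS)$, then $\R w$ would be $\G_x^\circ$-invariant; since $\G_x^\circ$ is compact (properness makes the isotropy groups compact) it preserves a metric and so acts on $\R w$ by isometries, i.e.\ by $\pm1$, and connectedness forces the trivial action, making $w$ a nonzero fixed vector --- a contradiction. Hence every orbit $\G_x^\circ\cdot l$, and a fortiori $\G_x\cdot l$, is positive-dimensional, giving $\dim\wt L>\dim\pi_0(\wt L)$. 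The delicate part is precisely assembling this fixed-point/isometry argument and correctly matching the two linearizations --- the one along $S$ used for the blow-up and the pointwise model of \autoref{lem:dim-strata} --- so that $N_xS$ is identified with the non-fixed part of $N_xL_x$.
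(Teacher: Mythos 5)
Your proof is correct and follows essentially the same route as the paper's: properness of \(\wt{\G}\) via the commuting square with \((s,t)\), properness and surjectivity of \(\pi\) from the linear model \(\G_S\times_S\wt{V}\), and the leaf statements computed in that model. The only real divergence is the final dimension count: the paper identifies \(\wt{L}\) locally with \(L_w/\mathbb{Z}_2\) for a nearby point \(w\in V\setminus S\) and concludes \(\dim\wt{L}=\dim L_w>\dim\pi(\wt{L})\) from maximality of \(j\), whereas you compute \(\dim\wt{L}=\dim\pi(\wt{L})+\dim(\G_x\cdot l)\) and rule out \(\G_x^\circ\)-fixed lines in \(\mathbb{P}(N_xS)\) using that \(N_xS\simeq N_xL_x/(N_xL_x)^{\G_x^\circ}\) has no nonzero fixed vectors. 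The two arguments are equivalent, and yours makes explicit the representation-theoretic reason, left implicit in the paper, why the relevant isotropy orbits are positive-dimensional.
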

\begin{proof}
	First note that since \(S\) is a most singular stratum, it is closed, and therefore the blow-up is well-defined. Let \(E:=\pi^{-1}(S)\subset\wt{M}\). By construction, away from \(E\), \(\wt{\G}\) is isomorphic to \(\G\) and hence away from \(E\), \(\pi\) is a proper surjective map and \(\wt{\G}\) is proper. Hence we are only considering what happens close to \(E\), where we have that \(\wt{\G}|_{\wt{U}}\simeq \G_S\times_S \wt{V}|_{\wt{V}}\). Since \(\pi\colon\wt{NS}\rar NS\) is a proper surjective map, so is its restriction \(\pi\colon\wt{V} = \pi^{-1}(V)\rar V\). As \(\wt{\G}\) is an action groupoid, it immediately follows that \(\wt{\G}\rar \G\) is surjective and proper.
	
	Consider the following diagram.
	\begin{center}
	\begin{tikzcd}
		\G_S\times_S \wt{V} 
        \arrow{r}{\pi}
        \arrow{d}{(\wt{s},\wt{t})}
        &
        \G_S \times_S V
        \arrow{d}{(s,t)}
        \\
        \wt{V}\times\wt{V}
        \arrow{r}{\pi\times\pi}
        &
        V \times V
	\end{tikzcd}
	\end{center}
	For a compact \(\wt{K}\subset \wt{V}\times\wt{V}\), we see that the set
	\begin{align*}
		(\wt{s},\wt{t})^{-1}(\wt{K}) \subset \pi^{-1} \left(   (s,t)^{-1}\left(  (\pi\times\pi)(\wt{K}) \right)\right)
	\end{align*}
	is a closed subset fo a  compact set and hence is compact. Therefore, \(\wt{\G}\) is a proper Lie groupoid.

	Now let \(\wt{L}\subset E\) be a leaf and let \((0,[w_i])\in\wt{L}\) for \(w_i\in NS\) and \(i=1,2\). Since they belong to the same leaf, there exists \(g_s\in \G_s\) such that \(g_s\cdot (0,[w_1])=(0,[w_2])\). Hence \(s(g_s) = \pi(w_1)\) and \(t(g_s) = \pi(w_1)\). That is, \(\pi(0,[w_1])\) and \(\pi(0,[w_2])\) belong to the leaf \(L\) and \(\pi(\wt{L})\subset L\subset S\). Now given any \(x\in L\), let \(y=\pi(0,[w])\in\pi(\widetilde{L})\) be arbitrary. Then there exists \(g_s\in \G_S\) such that \(g_s\colon y\rar x\). It follows that \(x = \pi(g_s\cdot(0,[w]))\in\pi(\wt{L})\) and hence \(\pi(\wt{L}) = L\).

	Finally, for \((0,[w])\in\wt{L}\) one can assume by rescaling that \(w\in V\subset NS\) and hence corresponds to an \(x\in U\subset M\). Then, locally, we see that \(\wt{L}\simeq L_x/\mathbb{Z}_2\) where the \(\mathbb{Z}_2\)-action comes from \([w]=[-w]\) and can be trivial. Hence the dimension of \(\wt{L}\) is equal to the dimension of \(L_x\). Now note that \(L_x\) does not lie in \(S\) and hence its dimension is bigger than that of \(\pi(\wt{L})\). 
\end{proof}

Note that \(\wt{\G}\) being proper is independent of \(S\) being a most singular stratum. In fact, any blow-up of a proper Lie groupoid along a closed saturated submanifold is once again proper. 
By blowing-up a finite amount of times we get:

\begin{thm}
\label{thm:action-resolution}
	Any proper Lie groupoid admits an action resolution.
\end{thm}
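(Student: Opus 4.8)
The plan is to iterate the blow-up of \autoref{prop:blow-up-proper}, using the maximal leaf-codimension as a monovariant that strictly drops at every step, until the groupoid becomes regular, and then to recognize the resulting tower as a single action groupoid. First I would set $j_{\max}(\G):=\max\{0\le i\le \dim(M)\mid S^i\neq\emptyset\}$, the largest codimension occurring among the leaves, where the $S^i$ are the pieces of the dimension stratification from \autoref{eq:def:strat-dim}. A proper groupoid is regular precisely when $j_{\max}(\G)$ coincides with the minimal occurring codimension, i.e.\ when $S^{j_{\max}}=M$. If $\G$ is not yet regular, then $S^{j_{\max}}$ is a proper, closed, saturated subset of $M$: each of its connected components is closed by \autoref{prop:properties-dim-strat}(1) and the partition is locally finite, so the union is closed, while saturation is clear since it is a union of leaves. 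By \autoref{prop:properties-dim-strat}(2) the components can be separated by disjoint open neighbourhoods, so the blow-up along $S^{j_{\max}}$ is a well-defined local construction and \autoref{prop:blow-up-proper} applies to each of them.

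Next I would run the induction. Blowing up $\G$ along the whole top stratum $S^{j_{\max}}$ yields a proper Lie groupoid $\wt{\G}=\G\times_M\wt{M}$ together with a surjective proper blow-down $\pi\colon\wt{\G}\rar\G$ that is an isomorphism away from the nowhere dense exceptional locus $E=\pi^{-1}(S^{j_{\max}})$. I claim $j_{\max}(\wt{\G})<j_{\max}(\G)$: over $\wt{M}\setminus E$ the groupoid is isomorphic to $\G$ restricted to $M\setminus S^{j_{\max}}$, where by definition of $j_{\max}$ every leaf has codimension $<j_{\max}$; and over $E$, \autoref{prop:blow-up-proper} gives that each leaf $\wt{L}$ satisfies $\dim(\wt{L})>\dim(\pi(\wt{L}))=\dim(M)-j_{\max}$, hence $\codim(\wt{L})<j_{\max}$. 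Thus no leaf of $\wt{\G}$ has codimension $j_{\max}$, proving the claim. Since $j_{\max}$ is a non-negative integer that strictly decreases at each step, after at most $\dim(M)$ blow-ups we reach a proper groupoid $\h$ whose leaves all have the same codimension, i.e.\ a regular proper groupoid. Blow-up preserves dimension, so $\dim\h=\dim\G$; properness is preserved throughout; and the composite blow-down $\pi\colon\h\rar\G$, being a finite composition of surjective proper maps each an isomorphism off a nowhere dense closed set, is again a surjective proper map that is an isomorphism almost everywhere. This exhibits $\h$ as a resolution of $\G$.

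Finally I would identify $\h$ as an \emph{action} resolution. Writing the tower as $\h=\wt{\G}_k\to\cdots\to\wt{\G}_1\to\wt{\G}_0=\G$, each stage is by construction the action groupoid $\wt{\G}_i=\wt{\G}_{i-1}\times_{N_{i-1}}N_i$ over its new object space $N_i$, with $N_0=M$. Associativity of fibre products then collapses the whole tower to $\h=\G\times_M N$ with $N=N_k$, so that the structure map $\h\rar\G\times_M N$, $h\mapsto(\pi(h),s(h))$, is the identity and in particular an isomorphism; this is exactly the definition of an action resolution. I expect the only genuinely delicate point to be the simultaneous blow-up of the top stratum when $S^{j_{\max}}$ has infinitely many components of differing dimensions: this is precisely what the separation property \autoref{prop:properties-dim-strat}(2), combined with the locality of the blow-up, is for, letting \autoref{prop:blow-up-proper} be applied component by component without interference. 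Everything else is a bookkeeping induction on $j_{\max}$ layered on top of the single-step statement already established.
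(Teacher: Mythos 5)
Your proposal is correct and follows essentially the same route as the paper: induct by blowing up along the most singular stratum $S^{j}$ (using \autoref{prop:properties-dim-strat}(2) to treat its components disjointly), invoke \autoref{prop:blow-up-proper} to see that the maximal leaf codimension strictly drops, and collapse the resulting tower via $(\G\times_M\wt{M})\times_{\wt{M}}N\simeq\G\times_M N$ to get the action resolution. The only cosmetic difference is that you track $j_{\max}$ alone as the decreasing quantity while the paper uses $j-m$; since $m$ is unchanged by the blow-up these are equivalent.
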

\begin{proof}
	Let \(j(\G):=\max\{0\leq i\leq \dim(M)\,|\, S^j\neq\emptyset\}\) and  \(m=\min\{0\leq i\leq \dim(M)\,|\, S^j\neq\emptyset\}\). If \(j - m = 0\), \(\G\) is regular and we are done. Assume by induction that all proper Lie  groupoids with \(j(\G)-m(\G) < k\) admit an action resolution. We will show that any proper Lie groupoid with \(j(\G)-m(\G) = k\) also admit an action resolution. The theorem then follows by induction.
	
	Hence let \(j(\G) -m (\G) = k\) for a groupoid \(\G\rrar M\). Since \(S^j\) is by construction saturated, we can define the blow-up \(\pi\colon\wt{\G}\rar\G\) of \(\G\) along \(S^j\). Property (2) of  \autoref{prop:properties-dim-strat} implies that we can pick the tubular neighbourhood around \(S^k\) as a disjoint union of tubular neighbourhoods of each connected component. Hence we can apply  \autoref{prop:blow-up-proper}, which implies that \(j(\wt{\G})-m(\wt{\G})<k\) and hence \(\wt{\G}\) admits a resolution \((\h\simeq \wt{\G}\times_{\wt{M}} N , p )\). Now, it is easily seen that the composition \(\pi\circ p\) is a surjective proper map, so that \(\h\) is a resolution for \(\G\) as well. Moreover:
	\begin{align*}
		\h 
        \simeq 
        \wt{\G}\times_{\wt{M}} N
        \simeq 
        (\G\times_M \wt{M})\times_{\wt{M}} N
        \simeq
        \G\times_M N
	\end{align*}
	And hence \(\h\) is an action resolution. 
\end{proof}

\begin{defn}
	An action resolution obtained by several blow-ups will be called a \emph{desingularization}. 
\end{defn}

Note that desingularizations are almost-everywhere isomorphic to the original groupoid.

%END OF TEX FILE
\subsection{Functoriality}
\label{subsec:blow-up-functoriality}
In this section we will prove that generalized morphisms between proper Lie groupoids lift to generalized morphisms between their blow-ups, as long as we blow-up submanifolds which are somehow related. When we specialize to Morita equivalent groupoids, we can show more: their desingularizations are again Morita equivalent. Throughout, let \((P,\alpha',\alpha)\colon\G'\drar \G\) be a generalized map. Given a subset \(S'\subset M'\) or \(S\subset M\), we let \(P(S'):= \alpha( (\alpha' )^{-1}(S'))\subset M \) and \(P^{-1}(S):=\alpha'( \alpha^{-1}(S) )\subset M'\).

\begin{lem}
	If \(S'\subset M' \) and \(S\subset M\), then \(P(S')\) and \(P^{-1}(S)\) are saturated. Moreover, \(P(P^{-1}(S)) = {\rm Sat}(S)\), the saturation of \(S\).
\end{lem}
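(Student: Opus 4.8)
The plan is to establish the three assertions in turn, exploiting the two commuting actions and the invariance of each moment map under the \emph{other} action. Recall that the right $\G$-action on $P$ is defined whenever $\alpha(p)=t(g)$, in which case $\alpha(p\cdot g)=s(g)$ while $\alpha'(p\cdot g)=\alpha'(p)$ (invariance of $\alpha'$ under $\G$); dually, the left $\G'$-action is defined whenever $\alpha'(p)=s(g')$, with $\alpha'(g'\cdot p)=t(g')$ and $\alpha(g'\cdot p)=\alpha(p)$ (invariance of $\alpha$ under $\G'$). These identities are exactly what is needed to move a point of $P$ along a $\G$- or $\G'$-orbit while controlling its image under the opposite moment map.

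For the saturatedness of $P(S')=\alpha((\alpha')^{-1}(S'))$, I would take $x\in P(S')$, pick $p$ with $\alpha'(p)\in S'$ and $\alpha(p)=x$, and let $y$ be any point in the $\G$-orbit of $x$, say $y=s(h)$ with $t(h)=x=\alpha(p)$. Then $p\cdot h$ is defined, $\alpha(p\cdot h)=s(h)=y$, and $\alpha'(p\cdot h)=\alpha'(p)\in S'$, so $y\in P(S')$; hence $P(S')$ is a union of $\G$-orbits. The saturatedness of $P^{-1}(S)=\alpha'(\alpha^{-1}(S))$ is the mirror statement: starting from $x'=\alpha'(p)$ with $\alpha(p)\in S$ and moving along the $\G'$-orbit to $y'=t(g')$ with $s(g')=x'$, the element $g'\cdot p$ satisfies $\alpha'(g'\cdot p)=y'$ and $\alpha(g'\cdot p)=\alpha(p)\in S$, giving $y'\in P^{-1}(S)$.

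For the final identity I would prove the two inclusions separately. The inclusion $P(P^{-1}(S))\subseteq {\rm Sat}(S)$ is where right-principality is essential: given $q\in(\alpha')^{-1}(P^{-1}(S))$ there is $p\in\alpha^{-1}(S)$ with $\alpha'(q)=\alpha'(p)$, and since the fibres of $\alpha'$ are exactly the $\G$-orbits we may write $q=p\cdot g$ for some $g\in\G$ with $t(g)=\alpha(p)\in S$; then $\alpha(q)=s(g)$ lies in the orbit of $\alpha(p)\in S$, so $\alpha(q)\in{\rm Sat}(S)$. The reverse inclusion ${\rm Sat}(S)\subseteq P(P^{-1}(S))$ runs the same construction backwards: for $y\in{\rm Sat}(S)$ choose $x\in S$ and $g$ with $t(g)=x$, $s(g)=y$, choose $p\in P$ with $\alpha(p)=x$, and verify that $p\cdot g\in(\alpha')^{-1}(P^{-1}(S))$ with $\alpha(p\cdot g)=y$.

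The main obstacle, and the one point that deserves care, is the existence of the preimage $p$ with $\alpha(p)=x$ in this last step: it requires $S\subseteq\alpha(P)$, equivalently (since $\alpha(P)=P(M')$ is itself saturated by the first part) ${\rm Sat}(S)\subseteq\alpha(P)$. This holds automatically whenever $\alpha$ is surjective, in particular for the Morita equivalences that are the relevant case here, where $\alpha$ is a surjective submersion; in general one only obtains $P(P^{-1}(S))={\rm Sat}(S)\cap\alpha(P)$, so I would either invoke surjectivity of $\alpha$ or record the identity with this intersection. Everything else reduces to a direct manipulation of the defining relations of the bibundle.
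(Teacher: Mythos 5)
Your proof is correct and follows essentially the same route as the paper's: move points along orbits using the invariance of each moment map under the opposite action, and use that the $\alpha'$-fibres are exactly the $\G$-orbits to get $P(P^{-1}(S))\subseteq {\rm Sat}(S)$. Your caveat that the reverse inclusion needs $S\subseteq\alpha(P)$ is well taken---the paper's step ``$S\subset P(P^{-1}(S))$, which is saturated'' silently assumes this, and it holds automatically in the Morita-equivalence setting where the lemma is ultimately applied.
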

\begin{proof}
	Let \(x\in P(S')\) and \(g\colon y\rar x\in \G\). Then, by definition of \(S\), there exists \(p\in P\) such that \(x=\alpha(p)\) and \(\alpha'(p)\in S'\). Since \(\alpha'(p\cdot g) = \alpha'(p)\in S'\), we conclude \(y = \alpha(p\cdot g)\in P(S')\) and hence \(P(S')\) is saturated. Note that we do not use any assumptions of \(P\) being right-principal, so the same holds for \(P^{-1}(S)\). For the second claim, we have that \(S\subset P(P^{-1}(S))\) which is saturated. Hence \(\mbox{Sat}(S)\subset P(P^{-1}(S))\). Now for the reversed inclusion note that:
	\begin{align*}
		P(P^{-1}(S)) 
        = 
        \{
        \alpha(p)\,|\, \exists q\in P \mbox{ s.t. } \alpha'(p)=\alpha'(q) \mbox{ and } \alpha(q)\in S 
        \}.
	\end{align*}
	Since the action of \(\G\) on \(P\) has orbits equal to the fibers of \(\alpha'\), for \(\alpha(p)\in P(P^{-1}(S))\) we have that there exists \(g\in \G\) such that \(p=q\cdot g\). Hence \(\alpha(p) = \alpha(q\cdot g) \in \mbox{Sat}(S)\)  since \(\alpha(q)\in S\).
\end{proof}

In case the generalized morphism \(P\) is a Morita equivalence we also get that \(P^{-1}(P(S'))=\mbox{Sat}(S')\). We will call two saturated sets \(S'\) and \(S\) \emph{related} or, in case of a Morita equivalence, \emph{equivalent}, if \( S' = P^{-1}(S) \), which also immediately implies that \(S=P(S')\). The idea behind related sets is that if we want to compare blow-ups of groupoids, we have to blow-up along related sets.

\begin{prop}
	Let \((P,\alpha',\alpha)\colon\G'\drar \G\) be a generalized morphism between proper Lie groupoids, \(S'\subset M'\) and \(S\subset M\) be related closed saturated submanifolds and let \(\wt{\G}'\) and \(\wt{\G}\) be the blow-ups of \(\G'\) and \(\G\) along these submanifolds. Then \(P\) lifts to a generalized map \((\wt{P},\wt{\alpha}',\wt{\alpha})\colon\wt{\G}'\drar \wt{\G}\). 
\end{prop}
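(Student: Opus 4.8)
The plan is to realise the lifted bibundle as a blow-up of $P$ itself and then transport the entire bibundle diagram through the blow-up using \autoref{lem:blow-up-maps} and \autoref{lem:blow-up-fibred-product}. The natural centre is $S_P:=\alpha^{-1}(S)$. The first step is to check, exactly as in the proof of the lemma on related saturated sets, that $S_P=\alpha^{-1}(S)=(\alpha')^{-1}(S')$: the inclusion $\alpha^{-1}(S)\subseteq(\alpha')^{-1}(S')$ is immediate from $S'=P^{-1}(S)$, while the reverse inclusion uses that the fibres of $\alpha'$ are the $\G$-orbits together with the saturation of $S$. Since $\alpha'$ is a submersion, $S_P$ is a closed embedded submanifold, and I define $\wt{P}$ to be the blow-up of $P$ along $S_P$. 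Because $\alpha'$ is a submersion with $(\alpha')^{-1}(S')=S_P$, the normal bundle satisfies $NS_P\simeq(\alpha')^*NS'$, and, blow-up being fibrewise, I expect the identification $\wt{P}\simeq P\times_{M'}\wt{M'}$, exactly parallel to $\wt{\G}\simeq\G\times_M\wt{M}$. In particular the projection $\wt{\alpha}':=\mathrm{pr}_{\wt{M'}}$ is a submersion lifting $\alpha'$.

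Next I must lift the second moment map to $\wt{\alpha}\colon\wt{P}\rar\wt{M}$, applying \autoref{lem:blow-up-maps} to $\alpha$, whose preimage condition $S_P=\alpha^{-1}(S)$ has just been verified. This is the main obstacle: unlike $\alpha'$, the map $\alpha$ need not be a submersion, so the lift only exists once one knows that $d\alpha$ carries the normal directions of $S_P$ into those of $S$ without collapsing any line, i.e.\ that $\alpha$ is transverse to $S$ along $S_P$. This is precisely where relatedness enters. Using the linearization of both proper groupoids around the saturated submanifolds (\autoref{thm:lin} and \autoref{cor:lin-sat}), together with the fact that $\alpha$ is $\G$-equivariant while $\alpha'$ is a principal $\G$-bundle projection, one sees that the condition $S'=P^{-1}(S)$ forces the normal representation of $\G'$ along $S'$ and that of $\G$ along $S$ to correspond under $\alpha$; hence the induced bundle map $NS_P\rar NS$ is fibrewise injective and $\wt{\alpha}$ is well defined and smooth. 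Concretely, over the exceptional divisor $\wt{\alpha}$ sends a line $\ell'\in\mathbb{P}(N_{y'}S')$ to $[d\alpha(\ell')]\in\mathbb{P}(N_xS)$, the injectivity guaranteeing this is a genuine line.

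With both moment maps in hand, I would produce the two commuting actions by blowing up the action maps. For the left action $a'\colon\G'\times_{M'}P\rar P$, the maps $t'$ and $\alpha'$ are submersions, hence transverse, the centres $\G'_{S'}$ and $S_P$ both lie over $S'$, and $a'^{-1}(S_P)=\G'_{S'}\times_{S'}S_P$; \autoref{lem:blow-up-fibred-product} then gives $\widetilde{\G'\times_{M'}P}\simeq\wt{\G'}\times_{\wt{M'}}\wt{P}$ (with $\wt{\G'}=\G'\times_{M'}\wt{M'}$), and since $a'$ is a submersion \autoref{lem:blow-up-maps} lifts it to an action $\wt{a}'\colon\wt{\G'}\times_{\wt{M'}}\wt{P}\rar\wt{P}$. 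The right action $a\colon P\times_M\G\rar P$ is treated identically, using that $t$ is a submersion transverse to $\alpha$, the centres $S_P,\G_S$ over $S$, and $a^{-1}(S_P)=S_P\times_S\G_S$, yielding $\widetilde{P\times_M\G}\simeq\wt{P}\times_{\wt{M}}\wt{\G}$ and the lifted action $\wt{a}\colon\wt{P}\times_{\wt{M}}\wt{\G}\rar\wt{P}$. That the two lifted actions commute, and that each lifted moment map is invariant under the other action, hold on the dense open set $\wt{P}\setminus E$ (where $\wt{P}\simeq P$) and therefore everywhere by continuity, $E$ being nowhere dense.

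It remains to verify right-principality. The map $\wt{\alpha}'$ is already a submersion. Using the product description $\wt{P}\simeq P\times_{M'}\wt{M'}$, the right $\wt{\G}$-action acts on the $P$-factor by the original free and proper $\G$-action, so freeness and properness are inherited and no new isotropy is created over the exceptional divisor; a direct computation shows the $\wt{\G}$-orbit of $(p,\tilde{y})$ equals $(\alpha')^{-1}(\pi'(\tilde{y}))\times\{\tilde{y}\}=\wt{\alpha}'^{-1}(\tilde{y})$, so the orbits are exactly the fibres of $\wt{\alpha}'$. Hence $(\wt{P},\wt{\alpha}',\wt{\alpha})$ is a right-principal bibundle $\wt{\G}'\drar\wt{\G}$, and the blow-down $\pi_P\colon\wt{P}\rar P$ intertwines all the structure by construction, so it genuinely lifts $P$. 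The single delicate point, which I would isolate as a lemma, is the transversality of $\alpha$ to $S$ established in the second step; everything else is a formal application of the two blow-up lemmas together with the density argument.
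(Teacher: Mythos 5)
Your proposal follows the same route as the paper's proof: set \(S_P:=\alpha^{-1}(S)=(\alpha')^{-1}(S')\), blow up \(P\) along \(S_P\), lift the moment maps with \autoref{lem:blow-up-maps}, lift the two actions with \autoref{lem:blow-up-maps} and \autoref{lem:blow-up-fibred-product}, deduce the bibundle identities by continuity from the dense open set where the blow-downs are diffeomorphisms, and then check right-principality. Your principality check via the identification \(\wt{P}\simeq P\times_{M'}\wt{M}'\) (so that the \(\wt{\G}\)-orbits are visibly the \(\wt{\alpha}'\)-fibres) is a clean alternative to the paper's argument, which instead lifts curves \(\gamma_p=\gamma_q\cdot\gamma_g\) over the exceptional divisor and sets \(\wt{g}=[\dot{\gamma}_g(0)]\); freeness and properness are handled essentially the same way in both.

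The one substantive difference is that you isolate, correctly, the delicate step: \(\alpha\) is not assumed to be a submersion, so \autoref{lem:blow-up-maps} only yields \(\wt{\alpha}\) once one knows that \(d\alpha\colon NS_P\rar NS\) is fibrewise injective. The paper applies the lemma to \(\alpha\) without comment, so here you are more careful than the source. However, your justification of this injectivity is only a sketch, and as stated it overclaims: fibrewise injectivity of \(d\alpha\) on normal bundles does not follow from \(S_P=\alpha^{-1}(S)\) together with relatedness of \(S'\) and \(S\) alone. Already for unit groupoids (where a generalized morphism is just a smooth map), \(f(x,y)=(x^2,xy)\) with \(S=\{0\}\) and \(S'=f^{-1}(S)=\{x=0\}\) gives related closed saturated submanifolds for which \(df\) annihilates a normal direction at the origin, and no continuous lift to the blow-up exists. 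So this step genuinely requires the extra structure present in the paper's actual applications --- \(S\) and \(S'\) strata of the dimension stratification, or \(P\) a Morita equivalence, in which case \(\alpha\) is itself a submersion and \(\codim(S_P)=\codim(S)\) by \autoref{lem:strata-morita-equivalence}, making \(d\alpha\colon NS_P\rar NS\) a fibrewise isomorphism. A complete write-up should either add such a hypothesis or derive the injectivity from it; since the paper's own proof silently carries the same gap, this is as much a comment on the statement as on your argument.
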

\begin{proof}
	Let \(S_P:=\alpha^{-1}(S)=(\alpha')^{-1}(S')\) and \(\wt{P}\)  be the blow-up of \(P\) along \(S_P\). Note that since \(\alpha'\) is a submersion, we get that \(S_P\) is indeed a submanifold. By  \autoref{lem:blow-up-maps}, the maps \(\alpha\) and \(\alpha'\) lift to maps \(\wt{\alpha}\) and \(\wt{\alpha}'\) such that \(\wt{\alpha}'\) is again a submersion. Similarly, we can lift the actions, by lifting the maps \(P\times_M\G\rar P \) and \(\G'\times_{M'} P \rar P\), using both \autoref{lem:blow-up-maps} and  \autoref{lem:blow-up-fibred-product}. Note that the actions commute and that the moments maps are invariant for the other action, since it holds on open dense subsets, where the blow-ups are isomorphic to the original manifolds.
	
	If \(\wt{g}=(g,\wt{x})\) and \(\wt{p}\cdot \wt{g} = \wt{p}\), we also get that \(\pi_P(\wt{p}) \cdot g = \pi_P(\wt{p})\) and since \(\G\) acts freely, \(g\) has to be \(1_{\pi(\wt{x})}\). Hence \(\wt{g} = (1_{\pi(\wt{x})},\wt{x}) = 1_{\wt{x}}\) and \(\wt{\G}\) acts freely. Using the properness of the \(\G\)-action, one can verify that the \(\wt{\G}\)-action is also proper. Hence we are left to show that \(\wt{M}'\simeq \wt{P}/\wt{Q}\) to conclude the proposition.
	
	Let \(\wt{\alpha}'(\wt{p})=\wt{\alpha}'(\wt{q})\in E'\subset \wt{M}'\). Then we can find \(\gamma_p,\gamma_q\colon I\rar P\) such that \(\alpha'\circ \gamma_p=\alpha'\circ\gamma_q\), \([\dot{\gamma_p}(0)]=\wt{p}\) and \([\dot{\gamma_q}(0)]=\wt{q}\). Hence there exists a unique \(\gamma_g\colon I\rar \G_S\) such that \(\gamma_p=\gamma_q\cdot \gamma_g\). Therefore \(\wt{q}\cdot \wt{g}=\wt{p}\) with \(\wt{g}:=[\dot{\gamma_g}(0)]\). Outside \(E'\), the claim is true as well. This proves the proposition.
\end{proof}

\begin{cor}
	\label{cor:morita-blow-up}
	Blow-ups of Morita equivalent groupoids along equivalent saturated submanifolds are Morita equivalent.
\end{cor}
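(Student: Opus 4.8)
The plan is to deduce the corollary from the preceding proposition by exploiting the symmetry of a Morita bibundle under reversal. Recall that a Morita equivalence \(P\colon\G'\drar\G\) is a bibundle that is simultaneously right- and left-principal; equivalently, both \(P\) and its \emph{reverse} \(\bar P\colon\G\drar\G'\) --- the same manifold \(P\) with the two moment maps interchanged, \(\bar\alpha=\alpha'\) and \(\bar\alpha'=\alpha\) --- are right-principal generalized morphisms. The preceding proposition already produces the lifted bibundle \(\wt P\colon\wt\G'\drar\wt\G\) and shows that it is right-principal (the lifted \(\wt\G\)-action is free and proper, \(\wt\alpha'\) is a submersion, and \(\wt M'\simeq\wt P/\wt\G\)). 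It therefore remains only to check that \(\wt P\) is also left-principal.

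The crucial observation is that the blow-up locus used to define \(\wt P\) is symmetric in the two moment maps: one blows up \(P\) along \(S_P:=\alpha^{-1}(S)=(\alpha')^{-1}(S')\), and this submanifold does not distinguish \(\alpha\) from \(\alpha'\). Consequently the blow-up \(\wt P\) of \(P\) along \(S_P\) coincides, as a manifold equipped with its two lifted moment maps, with the blow-up \(\wt{\bar P}\) of the reverse bibundle \(\bar P\) along the same locus, the only difference being that the roles of \(\wt\alpha\) and \(\wt\alpha'\) are interchanged. Thus \(\wt{\bar P}\) is literally the reverse of \(\wt P\).

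First I would verify that \(\bar P\), together with the pair \((S,S')\) --- now with \(S\) in the role of the source-side submanifold --- satisfies the hypotheses of the proposition. Since \(P\) is left-principal, \(\alpha=\bar\alpha'\) is a submersion, so \(S_P\) is again a submanifold; and because \(P\) is a Morita equivalence, the saturation lemma gives \(P^{-1}(P(S'))=\mathrm{Sat}(S')=S'\), so that \(S'=P^{-1}(S)\) and \(S=P(S')\) are genuinely related for \(P\). This translates into \(\bar P^{-1}(S')=\alpha((\alpha')^{-1}(S'))=P(S')=S\) and \(\bar P(S)=\alpha'(\alpha^{-1}(S))=P^{-1}(S)=S'\), so \(S\) and \(S'\) are related for \(\bar P\) as well. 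I would then apply the proposition to \(\bar P\) to conclude that \(\wt{\bar P}\) is a right-principal generalized morphism \(\wt\G\drar\wt\G'\). By the symmetry observation above, this is exactly the assertion that \(\wt P\) is left-principal. Combining this with the right-principality already established, \(\wt P\) is a bi-principal bibundle, i.e.\ a Morita equivalence between \(\wt\G'\) and \(\wt\G\), which proves the corollary.

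I expect the only real subtlety to be bookkeeping: checking that the relation ``equivalent'' is symmetric under reversal, and that the two lifted moment maps \(\wt\alpha,\wt\alpha'\) produced by the proposition for \(P\) match those produced for \(\bar P\) after the swap. Both points are immediate from the fact that, away from the exceptional divisor, all the blow-ups are canonically identified with the original manifolds, so that any identity holding on an open dense subset --- commutativity of the actions, invariance of the moment maps, and the matching of the two liftings --- propagates to the whole blow-up. No geometric input beyond the preceding proposition is needed.
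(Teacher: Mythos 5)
Your argument is correct and is exactly the (implicit) reasoning the paper relies on: the corollary is stated without proof as an immediate consequence of the preceding proposition, and the intended justification is precisely the symmetry you spell out — the blow-up locus $S_P=\alpha^{-1}(S)=(\alpha')^{-1}(S')$ and the lifted structure do not distinguish the two moment maps, so applying the proposition to the reversed bibundle shows $\wt{P}$ is also left-principal. Your checks that the "equivalent" relation is symmetric for a Morita equivalence and that the lifts agree by density off the exceptional divisor are the right bookkeeping.
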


In general it does not hold that any generalized morphism between groupoids lifts to their desingularizations. The main problem for this is that one cannot compare the stratifications. We can however when \(P\) is a Morita equivalence:

\begin{lem}
	\label{lem:strata-morita-equivalence}
	Let \((P,\alpha',\alpha)\colon\G'\rar \G\) be a Morita equivalence and \(p\in P\). Then:
	\begin{align*}
		\codim\left(L_{\alpha'(p)}'\right) = \codim\left(L_{\alpha(p)}\right)
	\end{align*}
	Hence a saturated \(S\subset M\) is a stratum if and only if \(P^{-1}(S)\) is.
\end{lem}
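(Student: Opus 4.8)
The lemma claims that a Morita equivalence $(P,\alpha',\alpha)\colon\G'\rar\G$ preserves codimension of leaves: $\codim(L'_{\alpha'(p)}) = \codim(L_{\alpha(p)})$ for all $p\in P$, and consequently that $S\subset M$ is a dimension stratum iff $P^{-1}(S)$ is.

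**Planning the proof.** The key tool should be the local structure of Morita equivalences together with the relationship between isotropy data at corresponding points. Under a Morita equivalence, the point $x=\alpha(p)$ and $x'=\alpha'(p)$ have canonically isomorphic isotropy groups and isomorphic normal representations. Recall $\codim(L_x) = \dim(N_xL_x)$, so I want to show $\dim(N_{x'}L'_{x'}) = \dim(N_xL_x)$, and in fact that the whole normal-isotropy data $(\G'^\circ_{x'}\curvearrowright N_{x'}L'_{x'})$ is isomorphic to $(\G^\circ_x\curvearrowright N_xL_x)$. This is exactly the data that the reduced normal orbit decomposition $S^{(G,V)}$ is built from, so once I have it, both claims follow.

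**The plan in steps.** First, I would recall that a Morita equivalence induces a diffeomorphism of orbit spaces $\bar\alpha\colon M'/\G'\xrightarrow{\sim}M/\G$, so that $\alpha'$ and $\alpha$ carry the leaf $L'_{x'}$ and the leaf $L_x$ to the same point of the (identified) orbit space; in particular the two bibundle moment maps are submersions onto saturated sets whose fibres are exactly leaves. Second, using that $\alpha'$ is a submersion with fibres the $\G$-orbits and $\alpha$ is a submersion with fibres the $\G'$-orbits, I would compute the dimension of $P$ two ways:
\[
\dim P = \dim L_x + \dim(\text{$\alpha$-fibre}) = \dim L_{x'}' + \dim(\text{$\alpha'$-fibre}).
\]
The $\alpha'$-fibre through $p$ is the $\G$-orbit $p\cdot\G_x$, of dimension $\dim\G - \dim\G_x$ along the source fibre, while the $\alpha$-fibre is the $\G'$-orbit. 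The cleanest route is to use the local normal form: pick a local model near $p$ in which $P$ looks like $V'\times_{?}\,(\text{bibundle of the linearized groupoids})$, and read off that the normal representations agree. Third, and most directly, I would invoke the standard fact (derivable from the linearization theorems \autoref{thm:lin} and the local model \eqref{eq:local-lin}) that along the $\G$-orbit through $p$ in $P$ the isotropy groups $\G_x$ and $\G'_{x'}$ are isomorphic and their normal representations on $N_xL_x$ and $N_{x'}L'_{x'}$ are intertwined by this isomorphism. Taking dimensions gives $\codim(L'_{x'})=\codim(L_x)$.

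**The final clause and the main obstacle.** For the "hence" statement, I would use the first part to show $P^{-1}(S^j) = (S')^{j}$ for each $j$: since $\codim$ is preserved, $p\in\alpha^{-1}(S^j)$ iff $\alpha'(p)\in(S')^j$, and because $P^{-1}$ and $P$ send saturated sets to saturated sets and are mutually inverse on orbit spaces (the Morita case of the preceding lemma), the dimension partitions correspond bijectively. The Frontier condition and connectedness are preserved because $\bar\alpha$ is a homeomorphism of orbit spaces, so connected components of $S^j$ match connected components of $(S')^j$; hence $S$ is a stratum iff $P^{-1}(S)$ is. The main obstacle I anticipate is making the identification of normal representations genuinely canonical rather than merely dimension-counting: I must be careful that the isomorphism $\G_x\simeq\G'_{x'}$ induced by choosing $p$ intertwines the two normal actions, so that not only dimensions but the full pairs $(G,V)$ agree. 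This is where I would lean on the explicit local model \eqref{eq:local-lin}, transporting it across $P$ to exhibit a bibundle-local isomorphism of the two linearized action groupoids, from which the intertwining — and therefore the equality of codimensions — is immediate.
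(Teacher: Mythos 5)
Your proposal is essentially correct but goes by a noticeably heavier route than the paper. The paper's proof is a one\--liner: since \(P\) is a Morita bibundle, \(\alpha\) is a submersion and \(\alpha^{-1}(L_{\alpha(p)})\) is exactly the double orbit \(\G'\cdot p\cdot\G\), so \(\codim_M(L_{\alpha(p)})=\codim_P(\G'\cdot p\cdot\G)\); the same holds for \(\alpha'\), and equating the two gives the claim. This is precisely the ``compute the dimension two ways'' idea you sketch as your second step, except that your displayed formula has a slip: \(\dim P\) should be \(\dim\alpha^{-1}(L_x)=\dim L_x+\dim(\alpha\text{-fibre})\), and the point is that \(\alpha^{-1}(L_x)\) \emph{is} the double orbit, so both moment maps compute the codimension of the same submanifold of \(P\). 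Your preferred route --- identifying the isotropy groups \(\G_x\simeq\G'_{x'}\) and intertwining the normal representations \(N_xL_x\) and \(N_{x'}L'_{x'}\) via the bibundle --- is a correct and standard fact, and it proves something strictly stronger (Morita invariance of the full pair \((\G_x^\circ,N_xL_x)\), hence of the reduced normal orbit decomposition, not just of the dimension stratification); the price is that this invariance is not established anywhere in the paper, so you would need to prove it from the linearization, whereas the submersion count needs nothing beyond the definition of a principal bibundle. Your handling of the ``hence'' clause (codimension preservation gives \(P^{-1}(S^j)=(S')^j\), and the induced homeomorphism of orbit spaces matches connected components and the Frontier condition) is fine and is more detail than the paper supplies.
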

\begin{proof}
	Note that \(\alpha\) is a submersion which maps \(\G'\cdot p\cdot \G\) onto \(L_{\alpha(p)}\). Therefore \(\codim(L_{\alpha(p)}) = \codim(\G'\cdot p\cdot \G)\). The same holds for \(\alpha'\), which proves the lemma.
\end{proof}

In other words, there is a one-to-one correspondence between the strata of \(\G\) and \(\G'\). Note that this assignment of equivalent strata keeps the partial ordering of the stratification intact. That is, we can use  \autoref{cor:morita-blow-up} a finite amount of times to conclude:

\begin{thm}
	\label{thm:blow-up-morita}
	The desingularizations of Morita equivalent groupoids are Morita equivalent.
\end{thm}

%END OF TEX FILE
\subsection{Properties}
\label{subsec:blow-up-properties}
Let \(\G\rrar M\) be a proper Lie groupoid and let \(\wt{\G}\rrar \wt{M}\) be its blow-up with blow-down map \(\pi\). In this section we will prove three lemma's on the behaviour of \(\wt{\G}\) with respect to \(\G\), which we will use in the next section on metrics. Besides \(\G\) and \(\wt{\G}\) being almost-everywhere diffeomorphic, they also have the same kind of \(s\)-fibres. Indeed:
	\begin{align*}
		\wt{s}^{-1}(\wt{x})
        =
        \G\times_M \{\wt{x}\} 
        \simeq 
        s^{-1}(\pi(x)).
	\end{align*}
Hence it makes sense to compare Haar systems. To this end we have the following lemma.

\begin{lem}
	\label{lem:Haar-system-blow-up}
	Let \(\mu\) be a proper Haar system on \(\G\). Then \(\wt{\mu}\) defined as	
	\begin{align*}
		\wt{\mu}^{\wt{x}}:=\mu^{\pi(\wt{x})},
	\end{align*}
	is a proper Haar system on \(\wt{\G}\).
\end{lem}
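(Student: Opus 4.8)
The plan is to verify the three defining conditions of a proper Haar system for $\wt\mu$ — right-invariance, smoothness, and properness of the source restricted to the support — by transporting each through the canonical fibrewise identification. Writing $\wt\G=\G\times_M\wt M$ as an action groupoid, the source is $\wt s(g,\wt x)=\wt x$, the target is $\wt t(g,\wt x)=g\cdot\wt x$, and the first projection $\pi_\G\colon\wt\G\to\G$, $(g,\wt x)\mapsto g$, is the arrow-level blow-down. The structure maps are compatible via $s\circ\pi_\G=\pi\circ\wt s$ together with the moment-map identity $\pi(g\cdot\wt x)=t(g)$, and for each $\wt x$ the restriction $\iota_{\wt x}:=\pi_\G|_{\wt s^{-1}(\wt x)}\colon\wt s^{-1}(\wt x)\xrightarrow{\ \sim\ }s^{-1}(\pi(\wt x))$ is a diffeomorphism; by definition $\wt\mu^{\wt x}$ is the transport of $\mu^{\pi(\wt x)}$ along $\iota_{\wt x}$.

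First I would check right-invariance. For $\wt g=(g,\wt x)$, right-translation $R_{\wt g}\colon\wt s^{-1}(\wt t(\wt g))\to\wt s^{-1}(\wt s(\wt g))$ sends $(h,g\cdot\wt x)\mapsto(hg,\wt x)$, so that $\iota_{\wt x}\circ R_{\wt g}=R_g\circ\iota_{g\cdot\wt x}$. Since $\pi(\wt s(\wt g))=s(g)$ and $\pi(\wt t(\wt g))=t(g)$, transporting the required identity $R_{\wt g}^*\wt\mu^{\wt s(\wt g)}=\wt\mu^{\wt t(\wt g)}$ through $\iota$ turns it into exactly the right-invariance $R_g^*\mu^{s(g)}=\mu^{t(g)}$ of $\mu$, which holds by hypothesis.

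For smoothness I would pass to the density description of a smooth Haar system: $\mu$ is given by a smooth positive density $\rho$ along the fibres of $s$, that is, a smooth section of the density bundle of $\ker ds\subset T\G$. At a point $(g,\wt x)$ one has $\ker d\wt s=(\ker ds)_g\times\{0\}$, and $d\pi_\G$ restricts to an isomorphism $\ker d\wt s\xrightarrow{\ \sim\ }(\ker ds)_g$; as $\pi_\G$ is smooth these assemble into a smooth bundle isomorphism $\ker d\wt s\cong\pi_\G^*\ker ds$. Pulling $\rho$ back along it produces a smooth fibre density $\wt\rho=\pi_\G^*\rho$ on $\ker d\wt s$, which by construction integrates to $\wt\mu$; hence $\wt\mu$ is smooth. (Equivalently, one may extend $\wt f\in C_c^\infty(\wt\G)$ to $\G\times\wt M$ and differentiate the parameter integral $\mu^x(\hat f(\cdot,\wt x))$ under the integral sign, but the density formulation is cleaner.)

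Finally, for properness I would record the inclusion $\operatorname{supp}\wt\mu\subseteq\pi_\G^{-1}(\operatorname{supp}\mu)=\operatorname{supp}\mu\times_M\wt M$, which follows from $\operatorname{supp}\wt\mu^{\wt x}=\iota_{\wt x}^{-1}(\operatorname{supp}\mu^{\pi(\wt x)})$ and closedness of $\pi_\G^{-1}(\operatorname{supp}\mu)$. The restriction of $\wt s$ to $\operatorname{supp}\mu\times_M\wt M$ is the projection of this fibre product onto $\wt M$, i.e.\ the base change of the map $s|_{\operatorname{supp}\mu}\colon\operatorname{supp}\mu\to M$ along $\pi\colon\wt M\to M$. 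As $\mu$ is proper, $s|_{\operatorname{supp}\mu}$ is proper, and properness is preserved under base change, so this projection is proper; restricting further to the closed subset $\operatorname{supp}\wt\mu$ keeps it proper, giving the claim. The main obstacle is the smoothness step: its content is that the transported measures vary smoothly in $\wt x$, which is exactly encoded by $d\pi_\G$ being a smooth bundle isomorphism on the $s$-vertical vectors — conceptually transparent once phrased through densities, but the only point requiring genuine verification rather than formal bookkeeping.
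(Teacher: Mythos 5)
Your proposal follows the same overall strategy as the paper: identify $\wt{s}^{-1}(\wt{x})\simeq s^{-1}(\pi(\wt{x}))$ via the action-groupoid description, transport right-invariance through this identification exactly as in the paper's change-of-variables computation, and deduce properness from properness of $\mu$ (your base-change argument for the latter is actually more explicit than the paper's one-line assertion, and is correct). The one step where you deviate is smoothness, and there your primary argument has a small gap: the paper's definition of a smooth Haar system only requires $x\mapsto\mu^x(f|_{s^{-1}(x)})$ to be smooth for $f\in C_c^\infty(\G)$, and such a system need not be given by a smooth density along the $s$-fibres. For instance, on the pair groupoid $M\times M\rrar M$ the family $\mu^x=\delta_{(z_0,x)}$ for a fixed $z_0\in M$ is right-invariant, smooth in this weak sense, and proper, yet carries no fibrewise density; so "pass to the density description" is an unjustified strengthening of the hypothesis. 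Your parenthetical alternative --- extend $\wt{f}$ to $\hat{f}\in C_c^\infty(\G\times\wt{M})$, note that $(x,\wt{x})\mapsto\int_{s^{-1}(x)}\hat{f}(g,\wt{x})\,d\mu^x(g)$ is smooth, and compose with the smooth map $\wt{x}\mapsto(\pi(\wt{x}),\wt{x})$ --- is the argument that actually works at this level of generality, and is essentially what the paper's terse remark that "the integration domain does not change" is gesturing at; you should promote it from a parenthesis to the main line.
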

\begin{proof}
	We start by showing that \(\wt{\mu}\) is right-invariant. For this, let \((g,\wt{x})\in\wt{\G}\), let \(\wt{y}:=g\wt{x}\) and let \(f\) be a smooth function on \(\wt{s}^{-1}(x)\simeq s^{-1}(\xi(\wt{x}))\). Then, using the right-invariance of \(\mu\):
	\begin{align*}
		\int_{(h,\wt{y})\in\wt{s}^{-1}(\wt{y})} f(hg,\wt{x})\, d\wt{\mu}^{\wt{x}}(h,\wt{y}) 
        & = \int_{h\in s^{-1}(\xi(\wt{y}))} f(hg,\wt{x})\, d\mu^{\xi(\wt{y})}(h)    \\
        & = \int_{h'\in s^{-1}(\xi(x))} f(h',\wt{x}) \, d\mu^{\xi(\wt{x})}(h')  \\
        & = \int_{(h',\wt{x})\in \wt{s}^{-1}(\wt{x})} f(h',\wt{x}) \, d\wt{\mu}^{\wt{x}}(h',\wt{x}).
	\end{align*}
	
	Next we check the smoothness of the map \(\wt{x}\mapsto \int_{\wt{s}^{-1}(\wt{x})} f(\wt{g})\,d\mu^{\wt{x}}(\wt{g})\) for \(f\) smooth on \(\wt{\G}\). As \(\G\simeq \wt{\G}\) away from the exceptional divisor, we only have to prove it around \(S\) and \(E\). With a similar argument, one realizes that we only have to prove it on \(\xi^{-1}(x)\) with \(x\in S\) fixed.  In this case, however, the integration domain does not change and the smoothness of the map directly follows from the smoothness of \(f\) itself. Properness of \(\wt{\mu}\) follows now directly from properness of \(\mu\).
\end{proof}

Next, we consider actions of \(\G\) and lifts of actions. Suppose \(\G\) acts on a submersion \(f\colon N\rar M\) by \(\theta\). In this case \(\wt{\G}\) acts on the projection \(p_{\wt{M}}:\wt{N}:=N\times_M \wt{M}\rar \wt{M}\) through \((g,\wt{x})\cdot (y,\wt{x}) := (g\cdot y, g\cdot \wt{x})\) such that the other projection \(p_N\colon\wt{N}\rar N\) makes the following diagram commute.
	\begin{center}
	\begin{tikzcd}
		\wt{\G}\times_{\wt{M}}\wt{N} 
        \arrow{r}{\wt{\theta}}
        \arrow{d}{\pi\times p_N}
        &
        \wt{N}
        \arrow{d}{p_N}
        \\
        \G\times_M N
        \arrow{r}{\theta}
        &
        N
	\end{tikzcd}
	\end{center}
A similar result holds for their tangent lifts, which rely on a choice of connection. Similar to the previous lemma we have the following lemma.
\begin{lem}
	\label{lem:connection-blow-up}
	Let \(\sigma\) be a connection on \(\G\). Then \(\wt{\sigma}\) defined as
	\begin{align}
		\wt{\sigma}_{(g,\wt{x})}(  \wt{v}_{\wt{x}} ) 
        :=
        (\sigma_g\circ d\pi(\wt{v}_{\wt{x}}) ,\wt{v}_{\wt{x}})
        \in T_g\G \times_{T_{\pi(\wt{x})}M} T_{\wt{x}}\wt{M}
	\end{align}
	is a connection on \(\wt{\G}\).
\end{lem}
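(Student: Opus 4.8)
The plan is to verify directly that $\wt{\sigma}$ satisfies the three defining properties of a connection on the action groupoid $\wt{\G} = \G\times_M\wt{M}$: that it is a smooth vector bundle morphism $\wt{s}^*T\wt{M}\rar T\wt{\G}$, that $d\wt{s}\circ\wt{\sigma} = \mathrm{id}$, and that $\wt{\sigma}|_{\wt{M}} = d\wt{u}$. The first thing I would record is the shape of $T\wt{\G}$: since the source map is $\wt{s}(g,\wt{x}) = \wt{x}$, the tangent space at $(g,\wt{x})$ is exactly the fibre product $T_g\G\times_{T_{\pi(\wt{x})}M} T_{\wt{x}}\wt{M}$ displayed in the statement, consisting of pairs $(w,\wt{v})$ with $ds(w) = d\pi(\wt{v})$.

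First I would check that the formula actually lands in $T_{(g,\wt{x})}\wt{\G}$, i.e., that the two components of $\wt{\sigma}_{(g,\wt{x})}(\wt{v}_{\wt{x}})$ are compatible over $T_{\pi(\wt{x})}M$. This is the one step that uses the connection property $ds\circ\sigma = \mathrm{id}$ of the original $\sigma$: applying $ds$ to the first component gives $ds(\sigma_g(d\pi(\wt{v}_{\wt{x}}))) = d\pi(\wt{v}_{\wt{x}})$, which is precisely $d\pi$ of the second component. Linearity in $\wt{v}_{\wt{x}}$ follows since both $\sigma_g$ and $d\pi$ are linear, and smoothness follows from that of $\sigma$ and $\pi$; note that the same expression defines $\wt{\sigma}$ on all of $\wt{\G}$, including over the exceptional divisor, so no separate gluing argument is needed.

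Next I would dispatch the remaining two axioms, both of which are immediate. Since $d\wt{s}(w,\wt{v}) = \wt{v}$ for any $(w,\wt{v})\in T\wt{\G}$, applying $d\wt{s}$ to $\wt{\sigma}_{(g,\wt{x})}(\wt{v}_{\wt{x}})$ recovers $\wt{v}_{\wt{x}}$, giving $d\wt{s}\circ\wt{\sigma} = \mathrm{id}$. For the unit condition I would use that the unit of the action groupoid is $\wt{u}(\wt{x}) = (u(\pi(\wt{x})),\wt{x})$, so that $d\wt{u}(\wt{v}_{\wt{x}}) = (du(d\pi(\wt{v}_{\wt{x}})),\wt{v}_{\wt{x}})$; evaluating $\wt{\sigma}$ at such a unit and invoking $\sigma|_M = du$ produces $(\sigma_{u(\pi(\wt{x}))}(d\pi(\wt{v}_{\wt{x}})),\wt{v}_{\wt{x}}) = (du(d\pi(\wt{v}_{\wt{x}})),\wt{v}_{\wt{x}})$, the same expression.

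In short, the argument is a routine unwinding of definitions and I do not expect a serious obstacle. If any step needs attention it is the first one, namely confirming that the image of $\wt{\sigma}$ respects the fibre-product constraint defining $T\wt{\G}$, since this is the only place where the relation $ds\circ\sigma = \mathrm{id}$ is invoked, and it is precisely what guarantees that $\wt{\sigma}$ is valued in the correct tangent bundle $T\wt{\G}$ rather than merely in $T\G\times T\wt{M}$.
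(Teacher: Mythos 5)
Your proposal is correct and follows essentially the same route as the paper's proof: verify the fibre-product compatibility via $ds\circ\sigma=\mathrm{id}$, observe that $d\wt{s}$ is projection onto the second factor, and check the unit condition using $\sigma|_M=du$. No issues.
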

\begin{proof}
	As \(ds\circ \sigma={\rm id}\), we get that \(\wt{\sigma}\) lands in \(T\wt{G} = T\G\times_{TM}T\wt{M}\). Moreover, since \(d\wt{s}\) is just the projection onto the second factor in this fibred product, we also immediately get that \(d\wt{s}\circ\wt{\sigma} = {\rm id}\). Finally, note that \(\wt{u}(\wt{x}) = (u(\pi(\wt{x})),\wt{x})\) and hence:
	\begin{align*}
		\wt{\sigma}_{\wt{u}(\wt{x})}(\wt{v}_{\wt{x}}) 
        & 
        = 
        \left( \sigma_{u(\pi(x))} \circ d\pi(\wt{v}_{\wt{x}} ) ,  \wt{v}_{\wt{x}}\right) 
        \\
        & 
        = 
        \left( du\circ d\pi(\wt{v}
        _{\wt{x}}),\wt{v}_{\wt{x}}   \right) 
        = 
        d\wt{u}(\wt{v}_{\wt{x}}),
	\end{align*}
	so indeed $\tilde{\sigma}$ defines a connection.
\end{proof}
Using this, we get:
\begin{lem}
	Let \(\G\rrar M\) act on a submersion \(f\colon N\rar M\) by \(\theta\) and let \(\sigma\) be a connection on \(\G\). Then the tangent lift \(T\theta\) of \(\theta\) with respect to \(\sigma\) and the tangent lift \(T\wt{\theta}\) of \(\wt{\theta}\) with respect to \(\wt{\sigma}\) commute:
	\begin{center}
	\begin{tikzcd}
		\wt{\G}\times_{\wt{M}} T\wt{N}
        \arrow{r}{T\wt{\theta}}
        \arrow{d}{\pi\times dp_N}
        &
        T\wt{N}
        \arrow{d}{dp_N}
        \\
        \G\times_M TN
        \arrow{r}{T\theta}
        &
        TN
	\end{tikzcd}
	\end{center}
\end{lem}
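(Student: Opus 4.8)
The plan is to reduce everything to the action-level square already established just before \autoref{lem:connection-blow-up}, namely $p_N\circ\wt\theta=\theta\circ(\pi\times p_N)$, and then differentiate it. First I would unwind the defining formula for a tangent lift in this situation: for the $\wt\G$-action on $\wt N$ along the moment map $p_{\wt M}$, the tangent lift with respect to $\wt\sigma$ reads $(g,\wt x)\cdot\wt w=d\wt\theta\bigl(\wt\sigma_{(g,\wt x)}\circ dp_{\wt M}(\wt w),\,\wt w\bigr)$ for $\wt w\in T\wt N$, in exact analogy with $g\cdot w=d\theta(\sigma_g\circ df(w),w)$ on $TN$. The goal is then to show $dp_N\bigl((g,\wt x)\cdot\wt w\bigr)=g\cdot dp_N(\wt w)$, which is precisely the commutativity of the square.

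Second, differentiating the action-level identity $p_N\circ\wt\theta=\theta\circ(\pi\times p_N)$ yields $dp_N\circ d\wt\theta=d\theta\circ d(\pi\times p_N)$. Applying this to the defining formula turns the left-hand side into $d\theta$ evaluated at $d(\pi\times p_N)\bigl(\wt\sigma_{(g,\wt x)}\circ dp_{\wt M}(\wt w),\,\wt w\bigr)$. Since $d(\pi\times p_N)$ acts as $(d\pi)\times(dp_N)$ on the fibred product $T\wt\G\times_{T\wt M}T\wt N$, it remains to identify the two components.

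Third, I would compute each component using the explicit shape of $\wt\sigma$. Writing $\wt v_{\wt x}:=dp_{\wt M}(\wt w)$ and $w:=dp_N(\wt w)$, \autoref{lem:connection-blow-up} gives $\wt\sigma_{(g,\wt x)}(\wt v_{\wt x})=(\sigma_g\circ d\pi(\wt v_{\wt x}),\,\wt v_{\wt x})$. Because $\pi\colon\wt\G\to\G$ is the projection of the action groupoid $\G\times_M\wt M$, the differential $d\pi$ keeps only the first factor, so $d\pi\bigl(\wt\sigma_{(g,\wt x)}(\wt v_{\wt x})\bigr)=\sigma_g\circ d\pi(\wt v_{\wt x})$. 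Now the base compatibility $\pi\circ p_{\wt M}=f\circ p_N\colon\wt N\to M$ (both send $(y,\wt x)$ to the common base point defining $N\times_M\wt M$) differentiates to $d\pi\circ dp_{\wt M}=df\circ dp_N$, hence $d\pi(\wt v_{\wt x})=df(w)$ and the first component equals $\sigma_g\circ df(w)$, while the second component is simply $dp_N(\wt w)=w$. Substituting back gives $dp_N\bigl((g,\wt x)\cdot\wt w\bigr)=d\theta(\sigma_g\circ df(w),w)=g\cdot w$, which is exactly the asserted commutativity.

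The only genuine work — and the main place to be careful — is the bookkeeping of the fibred products: checking that $\wt v_{\wt x}$ and $w$ are $df$-related in the same way that $\wt x$ and $y$ are $\pi$/$f$-related, and that $d\pi$ really is the first projection on $T\wt\G=T\G\times_{TM}T\wt M$. Once the relation $d\pi\circ\wt\sigma=\sigma\circ d\pi$ (over the base compatibility) is in hand, no further computation is required; the statement reduces to functoriality of $d$ applied to the already-commuting action square.
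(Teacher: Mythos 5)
Your proposal is correct and follows essentially the same route as the paper's proof: unwind the tangent-lift formula, differentiate the commuting action square $p_N\circ\wt\theta=\theta\circ(\pi\times p_N)$, use the explicit form of $\wt\sigma$ from \autoref{lem:connection-blow-up} together with the fibred-product identity $d\pi\circ dp_{\wt M}=df\circ dp_N$, and conclude. No gaps.
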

\begin{proof}
	We compute:
	\begin{align*}
		dp_N\circ T\wt{\theta}( g,\wt{x},w_y,\wt{v}_{\wt{x}} )
        & 
       	= 
        dp_N\circ d\wt{\theta}
        \left( 
        \wt{\sigma}_{(g,\wt{x})}  \circ  dp_{\wt{M}} (w_y,\wt{v}_{\wt{x}}) ,(w_y,\wt{v}_{\wt{x}}) 
        \right)
        \\
        & 
        = 
        d\theta\circ d(\pi\times p_N)
        \left(  
        \wt{\sigma}_{(g,\wt{x})}( \wt{v}_{\wt{x}}),(w_y,\wt{v}_{\wt{x}})
        \right)
        \\
        & 
        =
        d\theta 
        \left(
        d\pi\circ \wt{\sigma}_{(g,\wt{x})}(\wt{v}_{\wt{x}}), dp_N(w_y,\wt{v}_{\wt{x}})
        \right)
        \\
        &
        =
        d\theta
        \left(
        \sigma_g\circ d\pi(\wt{v}_{\wt{x}}),w_y
        \right)
        \\
        &
        =
        d\theta
        \left(
        \sigma_g\circ df(w_y),w_y
        \right)
        \\
        &
        =
        T\theta(g,w_y)
        =
        T\theta \circ (\pi\times dp_N)(g,\wt{x},w_y,v_{\wt{x}}).
	\end{align*}
	This proves the commutativity.
\end{proof}

%END OF TEX FILE
%
\section{Metrics and desingularizations}
\label{sec:metric}
In \cite{HoyoFernandes14} the authors prove the linearization results of proper Lie groupoids, by showing that any groupoid with a \(2\)-metric can be linearized and that any proper Lie groupoid admits a \(2\)-metric. Hence instead of blowing-up proper Lie groupoids, one can blow up Riemannian groupoids. Since we have shown that the blow-up of a proper Lie groupoid is again a proper Lie groupoid, results in \cite{HoyoFernandes14} show that there again exists a \(2\)-metric on the blow-up. We will show that we can use the metric on the original groupoid to construct an explicit metric on the blow-up. The advantage to this is that we obtain some extra properties on the behaviour of the blow-down map, in particular when restricted to the exceptional divisor.

\subsection{Groupoid metrics}
\label{subsec:groupoids-metrics}
Let us start by recalling some definitions and basic properties of metrics on groupoids, which can be found in \cite{HoyoFernandes14}.  The metrics we will use are the so-called simplicial metrics.

\begin{defn}
	A \emph{simplicial metric} on a groupoid \(\G\rrar M\) consists of a metric \(\eta^k\) on each component \(\G^{(k)}\) of the nerve of \(\G\) such that:
	\begin{itemize}
		\item Each face map \(\G^{(k)}\rar \G^{(k-1)}\) is a Riemannian submersion;
    	\item The group \(S_{k+1}\) acts by isometries on the \(\G^{(k)}\).
	\end{itemize}
	The pair \((\G,\eta)\) is called a \emph{Riemannian Lie groupoid}.
\end{defn}

An important result of \cite{HoyoFernandes14} is the existence of simplicial metrics on proper Lie groupoids:

\begin{thm}[c.f.\ \cite{HoyoFernandes14}]
	\label{thm:existence-simp.metric}
	Any proper Lie groupoid admits a simplicial metric.
\end{thm}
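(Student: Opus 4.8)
The plan is to follow the strategy of del Hoyo and Fernandes, reducing the construction of a full simplicial metric to the construction of a metric at a single level of the nerve, and then producing the latter by an averaging argument in which properness enters essentially.

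First I would reduce to constructing a \emph{$2$-metric}, that is, a metric $\eta^{(2)}$ on $\G^{(2)}$ which is invariant under the $S_3$-action and whose three face maps $\G^{(2)}\rar\G$ are Riemannian submersions onto one common induced metric $\eta^{(1)}$ on $\G$. The point of this reduction is that the higher nerve spaces are iterated fibre products, $\G^{(k)}\simeq\G^{(2)}\times_\G\cdots\times_\G\G^{(2)}$ (gluing composable pairs along a shared arrow), so a $2$-metric propagates canonically upward: the fibre product of two Riemannian submersions over a common base metric carries a canonical metric, where the vertical distributions keep their given metrics and the shared horizontal part inherits the metric of the base, and for which the projections are again Riemannian submersions. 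One then checks, using the simplicial identities, that the induced $\eta^{(k)}$ are $S_{k+1}$-invariant and that all face maps remain Riemannian submersions. This ``extension from level two'' is essentially formal once compatibility at levels $0,1,2$ is secured.

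The substantive step is the construction of the $2$-metric, and here properness is used. Since $\G$ is proper it admits a proper Haar system $\{\mu^x\}$, together with a normalised cut-off $c\in C_c^\infty(M)$ satisfying $\int_{s^{-1}(x)}c(t(g))\,d\mu^x(g)=1$ for all $x$. I would start from an arbitrary metric on $\G^{(2)}$ built by a partition of unity and correct it in two stages: first symmetrise over the finite group by the average $\tfrac{1}{6}\sum_{\sigma\in S_3}\sigma^*\eta$, which is harmless and enforces $S_3$-invariance; and second average the fibrewise data against the Haar system, weighted by $c$, to enforce invariance under the groupoid multiplication, so that the quotient-type face maps become Riemannian submersions onto a well-defined metric downstairs. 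Properness guarantees convergence of the Haar integrals (this is the role of $c$) and guarantees that the relevant action is proper, which is precisely the hypothesis under which an invariant metric makes an orbit projection a Riemannian submersion.

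The main obstacle I anticipate is the \emph{simultaneous} compatibility: all three face maps must be Riemannian submersions onto the \emph{same} metric $\eta^{(1)}$ on $\G$, and $\eta^{(1)}$ must itself be a genuine $1$-metric (inversion an isometry; $s$ and $t$ Riemannian submersions onto a single metric on $M$). A naive $S_3$-symmetrisation and a naive Haar-average need not commute, and performing one can spoil the property secured by the other. The resolution is to carry out the entire averaging at the top level $\G^{(2)}$ and to \emph{define} $\eta^{(1)}$ and $\eta^{(0)}$ as the metrics induced by the face maps; the simplicial identities relating the face maps to the $S_3$-action then force consistency automatically. Verifying that this single averaged metric satisfies all submersion and isometry conditions at once is the technical heart of the matter, and it is exactly this verification that is carried out in \cite{HoyoFernandes14}.
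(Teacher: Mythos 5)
First, a remark on context: the paper does not actually prove this theorem — it is quoted from \cite{HoyoFernandes14} — but it recalls the two ingredients of the proof immediately afterwards (pushing down an $f$-transverse metric, and the cotangent averaging of \autoref{prop:average-metric} over a proper Haar system together with a connection), and the construction is mirrored concretely in \autoref{lem:metric-submersion-groupoid} and \autoref{prop:metric-groupoid-blow-up}. Your second stage — averaging against a proper Haar system, with properness responsible both for the convergence of the integrals and for the quotient maps becoming Riemannian submersions — is indeed the substantive mechanism, so the overall flavour of your proposal is right.

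However, your reduction step contains a genuine gap: it is not true that a $2$-metric propagates ``essentially formally'' to a simplicial metric via $\G^{(k)}\simeq\G^{(2)}\times_\G\cdots\times_\G\G^{(2)}$. The fibre-product metric of \autoref{lem:pull-back-metric} makes only \emph{one} of the two projections a Riemannian submersion, it depends on which of the several decompositions of $\G^{(k)}$ as an iterated fibre product you choose, and it is not $S_{k+1}$-invariant: the $S_{k+1}$-action permutes the $k+1$ ``vertices'' of a string of composable arrows and does not preserve the two $\G^{(2)}$-factors. In the del Hoyo--Fernandes framework only the downward direction is formal (an $n$-metric pushes down to an $(n-1)$-metric along any face map); going up a level is precisely the nontrivial point. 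The correct route constructs the metric at every level simultaneously: realize $\G^{(k)}$ as the quotient of $\G^{[k+1]}=\G\times_M\cdots\times_M\G$ (fibred over the source maps) by the diagonal right $\G$-action with quotient map $\phi_{k+1}\colon\G^{[k+1]}\rar\G^{(k)}$; equip $\G^{[k+1]}$ with an $S_{k+1}$-invariant metric whose forgetful projections are Riemannian submersions, which is elementary since $\G^{[k+1]}$ is just an iterated fibred product of the single submersion $s\colon\G\rar M$ and no groupoid compatibility is needed at this stage; then apply the cotangent average of \autoref{prop:average-metric} with one fixed proper Haar system and connection, and push down. Because the face maps and the symmetric group actions on the nerve all lift to the spaces $\G^{[k+1]}$, and the same Haar system and connection are used at every level, the resulting metrics on the $\G^{(k)}$ are automatically compatible across levels — which is exactly the consistency your proposal hopes to obtain for free from a $2$-metric, but cannot.
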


The proof of this theorem consists of two tricks, pushing down metrics and averaging metrics. These we will use later and hence we will now spend some time on discussing them. 
Firstly, suppose that \(f\colon N\rar M\) is a submersion and \(\eta_N\) a metric on \(N\). Then \(\eta_N\) is called \emph{\(f\)-transverse} if for all \(m\in M\) and all \(n_1,n_2\in f^{-1}(m)\) we have that the maps: 
	\begin{align*}
		(d_{n_1}f)^*\circ ((d_{n_2}f)^*)^{-1}\colon{\rm Ann} (T_{n_2} f^{-1}(m) ) 
		\rar 
		T_m^*M  
		\rar 
		{\rm Ann}(T_{n_1}  f^{-1}(m))
	\end{align*}
are isometries. In this case, there exists a unique metric \(\eta_M\) on \(M\) such that \(f\) is a Riemannian submersion. This metric, defined as
	\begin{align*}
		\eta_M( df(X),df(Y) ):=\eta_N( X,Y ),
	\end{align*}
for \(X,Y\in \ker(df)^\bot\), is called the \emph{push-forward metric}.

Secondly, recall that given a metric \(\eta\) on \(N\), its \emph{dual} \(\eta^*\) is pointwise defined as the dual of the map \(\eta_n\colon T_nN\rar T_n^*N\). In \cite{HoyoFernandes14}, del Hoyo and Fernandes use this dual to  \emph{average} \(\eta\), depending on a choice of Haar system and connection:

\begin{prop}[c.f.\ \cite{HoyoFernandes14}]
\label{prop:average-metric}
	Let \(\G\rrar M\) be a proper Lie groupoid acting free and properly on \((N,\eta)\rar M\) through \(\theta\) and let \(\pi\colon E\rar E/\G\) be the quotient map. Then a choice of Haar system \(\mu\) and connection \(\sigma\) defines \(\pi\)-transverse metric \({\rm Av}(\eta)\), whose dual is defined as:
	\begin{align}
		{\rm Av}(\eta)^*_n(\alpha,\beta) := \int_{g\in s^{-1}(x)} \eta_{n\cdot g}^*((T\theta_g)^* (\alpha), (T\theta_g)^*(\beta))\, \mu^x(g).
	\end{align}
	Moreover, if \(\eta\) was already \(\pi\)-transeverse, then the push-down metrics of \(\eta\) and \({\rm Av}(\eta)\) agree. Finally, if \(\G\) acts on \(N_i\rar M\) for \(i=1,2\)  and \(p\colon(N_1,\eta_1)\rar (N_2,\eta_2)\) is a Riemannian submersion which is equivariant with respect to these actions, then \(p\colon(N_1,{\rm Av}(\eta_1))\rar (N_2,{\rm Av}(\eta_2))\) is a Riemannian submersion as well. 
\end{prop}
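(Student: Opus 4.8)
The plan is to check, in order, that the integral formula defines a genuine cometric, that its dual \({\rm Av}(\eta)\) is \(\pi\)-transverse and hence descends, and finally to verify the two compatibility statements. For the first step I would fix \(\alpha,\beta\in T_n^*N\) (write \(x=f(n)\)) and note that the integrand \(g\mapsto \eta^*_{n\cdot g}\big((T\theta_g)^*\alpha,(T\theta_g)^*\beta\big)\) is smooth in \((n,g)\), because \(\theta\), the connection \(\sigma\) (hence the tangent lift \(T\theta\)), and the dual metric \(\eta^*\) are all smooth. Properness of the Haar system confines the integration to a compact subset of \(s^{-1}(x)\), so the integral converges and, by the smoothness clause in the definition of a Haar system, depends smoothly on \(n\). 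For positive-definiteness observe that the integrand is everywhere nonnegative, and that over the unit \(u(x)\) one has \(T\theta_{u(x)}={\rm id}\) (a consequence of \(\sigma|_M=du\)); hence for \(\alpha\neq 0\) the integrand equals \(\eta_n^*(\alpha,\alpha)>0\) near \(u(x)\), and integrating against the positive measure \(\mu^x\) produces a positive number. Thus \({\rm Av}(\eta)^*\) is the dual of a bona fide Riemannian metric \({\rm Av}(\eta)\).

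The heart of the matter, and the step I expect to be the main obstacle, is \(\pi\)-transversality, the difficulty being that \(T\theta\) is only a quasi-action: it depends on \(\sigma\) and generally fails the cocycle identity \(T\theta_{gh}=T\theta_g\circ T\theta_h\). The key observation that removes this difficulty is that the connection-dependence disappears on conormal covectors. Indeed, for fixed \(g\) the difference \(T\theta_g(w)-d(\theta_g)(w)=d\theta(\sigma_g\circ df(w),0)\) is tangent to the \(\G\)-orbit through \(n\cdot g\), so \(T\theta_g\) and the honest differential \(d\theta_g\) induce one and the same map on the normal spaces \(\nu=TN/\ker(d\pi)\) --- namely the canonical normal representation, which is a genuine (indeed flat) cocycle. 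Dually, on \({\rm Ann}(\ker d\pi)=\pi^*T^*(N/\G)\) the operators \((T\theta_g)^*\) and \(d\theta_g^*\) coincide, and they equal the transition isomorphism \((d_{n_1}\pi)^*\circ\big((d_{n_2}\pi)^*\big)^{-1}\) that enters the definition of \(\pi\)-transversality. Restricting \({\rm Av}(\eta)^*\) to conormal covectors, performing a change of variables in the \(s\)-fibre, and invoking the right-invariance \(R_h^*\mu^{s(h)}=\mu^{t(h)}\) together with the cocycle property of the normal representation, one finds that this restriction is invariant under the transition maps. This is precisely the \(\pi\)-transversality condition, so \({\rm Av}(\eta)\) admits a well-defined push-forward metric on \(N/\G\).

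The two ``moreover'' statements are then short. If \(\eta\) is already \(\pi\)-transverse, the identification above shows that for conormal \(\alpha,\beta\) the integrand \(\eta^*_{n\cdot g}\big((T\theta_g)^*\alpha,(T\theta_g)^*\beta\big)\) is independent of \(g\) and equals \(\eta_n^*(\alpha,\beta)\); integrating against the normalized measure \(\mu^x\) (of total mass one) gives \({\rm Av}(\eta)^*=\eta^*\) on conormal covectors, so \(\eta\) and \({\rm Av}(\eta)\) have the same push-forward. For the last statement, equivariance of \(p\) over \(M\) forces the tangent lifts to intertwine: differentiating \(p\circ\theta_1=\theta_2\circ({\rm id}_\G\times p)\) and using that both lifts are built from the same \(\sigma\) yields \(dp\circ T\theta_g^{(1)}=T\theta_g^{(2)}\circ dp\), hence \((T\theta_g^{(1)})^*\circ p^*=p^*\circ(T\theta_g^{(2)})^*\). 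Substituting this into the defining integral for \({\rm Av}(\eta_1)^*(p^*\alpha,p^*\beta)\), using the dual form \(\eta_1^*(p^*\gamma,p^*\delta)=\eta_2^*(\gamma,\delta)\) of the hypothesis that \(p\) is a Riemannian submersion, and the equivariance \(p(n\cdot g)=p(n)\cdot g\), one obtains \({\rm Av}(\eta_1)^*(p^*\alpha,p^*\beta)={\rm Av}(\eta_2)^*(\alpha,\beta)\). This says exactly that \(p\colon(N_1,{\rm Av}(\eta_1))\rar(N_2,{\rm Av}(\eta_2))\) is again a Riemannian submersion.
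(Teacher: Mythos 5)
The paper offers no proof of this proposition: it is quoted from del Hoyo--Fernandes \cite{HoyoFernandes14}, so there is no in-paper argument to compare against. Your reconstruction is essentially the standard argument from that reference, and it is correct. In particular you have isolated the right crux: since \(\pi\circ\theta=\pi\circ{\rm pr}_N\), one gets \(d\pi\circ T\theta_g=d\pi\), so on \({\rm Ann}(\ker d\pi)={\rm Im}((d\pi)^*)\) the operator \((T\theta_g)^*\) is independent of the connection \(\sigma\) and coincides with the transition isomorphism appearing in the definition of \(\pi\)-transversality; right-invariance of \(\mu\) then makes the restricted cometric constant along orbits, which is exactly \(\pi\)-transversality. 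Two small points should be tightened. First, your positive-definiteness argument leans on the integrand being positive \emph{near the unit} \(u(x)\); for a proper Haar system the support of \(\mu^x\) need not meet any neighbourhood of \(u(x)\), so argue instead that each \(T\theta_g\) (hence \((T\theta_g)^*\)) is a linear isomorphism, so that for \(\alpha\neq0\) the integrand is strictly positive on all of \(s^{-1}(x)\), and positivity follows from \(\mu^x\) being a nonzero positive measure. Second, the clause that the push-downs of \(\eta\) and \({\rm Av}(\eta)\) agree genuinely requires the Haar system to be normalized, \(\mu^x(s^{-1}(x))=1\); you invoke this, and it is part of the setup in \cite{HoyoFernandes14}, but it is not contained in the paper's stated definition of a proper Haar system, so it should be made explicit.
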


In our construction of the blow-up of a groupoid, we use pull-backs. Metrics behave naturally under these pull-backs, as the following lemma shows.

\begin{lem}
\label{lem:pull-back-metric}
	Consider the pull-back diagram of manifolds:
	\begin{center}
	\begin{tikzcd}
		M\times_N M'
        \arrow{r}{p'}
        \arrow{d}{p}
        &
        (M',\eta')
        \arrow{d}{f'}
        \\
        (M,\eta)
        \arrow{r}{f}
        &
        (N,\eta_N)
	\end{tikzcd}
	\end{center}
	with \(f\) a Riemannian submersion. Such a pull-back admits a metric,
	\begin{align}
	\label{eq:defn:pull-back-metric}
    	\eta*\eta':= p^*(\eta) + p'^*(\eta') - (f\circ p)^*(\eta_N),
	\end{align}
	such that \(p'\) is a Riemannian submersion.	
\end{lem}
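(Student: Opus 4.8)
The plan is to work pointwise and to reduce everything to the horizontal/vertical splitting supplied by the Riemannian submersion $f$. Fix a point $(m,m')$ of the fibre product, so $f(m)=f'(m')=:b$. Since $f$ is a submersion it is transverse to $f'$, so $M\times_N M'$ is a smooth manifold and its tangent space is
\begin{align*}
T_{(m,m')}(M\times_N M') = \{(v,v')\in T_mM\times T_{m'}M'\,|\, df(v)=df'(v')\}.
\end{align*}
Because $dp(v,v')=v$ and $dp'(v,v')=v'$, the defining formula \eqref{eq:defn:pull-back-metric} reads here
\begin{align*}
(\eta*\eta')\big((v_1,v_1'),(v_2,v_2')\big) = \eta(v_1,v_2)+\eta'(v_1',v_2')-\eta_N\big(df(v_1),df(v_2)\big),
\end{align*}
which is visibly symmetric and bilinear.

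First I would establish positive-definiteness, the only non-formal point, since the formula subtracts a term. Using that $f$ is a Riemannian submersion, decompose $v=v^h+v^v$ with $v^v\in\ker df$ and $v^h\in(\ker df)^\bot$; then $df(v)=df(v^h)$ and $\eta(v^h,v^h)=\eta_N(df(v),df(v))$. Substituting into the diagonal of the formula above, the subtracted term cancels the horizontal part of $\eta(v,v)$, leaving
\begin{align*}
(\eta*\eta')\big((v,v'),(v,v')\big) = \eta(v^v,v^v)+\eta'(v',v').
\end{align*}
Both summands are non-negative, and they vanish together only if $v^v=0$ and $v'=0$; but $v'=0$ forces $df(v)=df'(v')=0$, hence $v^h=0$ by the isometry of $f$ on horizontal vectors, so $(v,v')=0$. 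Thus $\eta*\eta'$ is a metric.

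Next I would verify that $p'$ is a Riemannian submersion. As the pull-back of the submersion $f$, the map $p'$ is itself a submersion, with vertical bundle $\ker dp'=\{(v,0)\,|\,v\in\ker df\}$. Pairing such a vertical vector with an arbitrary $(w,w')$ in the formula above, the $\eta'$-term vanishes and the $\eta_N$-term vanishes because $df(v)=0$, leaving $\eta(w,v)$; hence the $\eta*\eta'$-orthogonal complement of $\ker dp'$ is $H=\{(w,w')\,|\,w\in(\ker df)^\bot\}$. For $(w,w')\in H$ the same horizontal computation as above gives $(\eta*\eta')((w,w'),(w,w'))=\eta'(w',w')=\eta'(dp'(w,w'),dp'(w,w'))$, and surjectivity of $dp'|_H$ onto $T_{m'}M'$ follows by horizontally lifting any $df'(w')$ through $f$. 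This is precisely the assertion that $p'$ is a Riemannian submersion.

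The main obstacle is the positivity step: because $\eta*\eta'$ is defined with the subtraction of $(f\circ p)^*\eta_N$, it is not a priori a metric at all. The Riemannian-submersion hypothesis on $f$ is exactly what forces the subtracted term to cancel the horizontal part of $p^*\eta$, so that $\eta*\eta'$ restricts on the diagonal to the sum of two genuine metrics. Once this is in hand, the identification of the vertical and horizontal bundles of $p'$ and the isometry check are routine bookkeeping with the same splitting.
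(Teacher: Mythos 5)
Your proof is correct and follows essentially the same route as the paper's: positive-definiteness via the horizontal/vertical splitting of $f$ (the subtracted term cancels the horizontal part of $p^*\eta$), and the identification $\ker(dp')^\bot=\ker(df)^\bot\times_{TN}TM'$ followed by the computation showing $dp'$ is isometric there. Your version is slightly more explicit on the positivity step, but the ideas and structure coincide with the paper's argument.
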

\begin{proof}
	For \(\eta*\eta'\) to be a metric, it is enough to show positive definiteness. Hence let \((v,v')\in TM\times TM'\) be such that \(df(v)=df'(v')\) and \(||(v,v')||=0\). Since \(f\) is a Riemannian submersion, we get that: 
	\begin{align*}
		0=\eta*\eta'((v,v'),(v,v')) \geq \eta'(v',v') \geq 0
	\end{align*}
	and hence \(v'=0\). It now readily follows that \(v=0\) so that \(\eta*\eta'\) is positive definite. Note that \(\ker(dp') = \ker(df)\times \{0\}\subset TM\times TM'\) and hence \(\ker(dp')^\bot = \ker(df)^\bot \times_{TN} TM'\). For \((v,v'),(w,w')\in \ker(dp')^\bot\) we get:
	\begin{align*}
		\eta*\eta'((v,v'),(w,w')) 
        &
        = 
        \eta(v,w) + \eta(v',w') - \eta_N(df(v),df(w)) 
        = 
        \eta(v',w') 
        \\
        &
        = 
        \eta(dp'(v,v'),dp'(w,w')).
	\end{align*}
	We conclude that \(p'\) is a Riemannian submersion.
\end{proof}

We will call the metric defined in  \autoref{eq:defn:pull-back-metric}, the \emph{pull-back metric} of \(\eta\) and \(\eta'\).
The  rest of this section is  structured  as follows: firstly, in  \autoref{subsec:metric-base}, we prove that \(\wt{M}\) admits a metric which satisfies some natural conditions. Then, mirroring the proof of  \autoref{thm:existence-simp.metric}, we use this metric to get a metric on the groupoid \(\tilde{\G}\) in  \autoref{subsec:metric-groupoid}. We finish in  \autoref{subsec:metric-morita} by showing that the constructed metric is Morita invariant.

%END OF TEX FILE
\subsection{Metric on the base}
\label{subsec:metric-base}
We start with  focusing on the metric on the base \(M\). We will spend this section proving the following result:

\begin{prop}
	\label{prop:metric-base-blow-up}
	Let \(\G\rrar M\) be a proper groupoid endowed with a simplicial metric \(\eta\) and \(S\subset M\) be its most singular stratum. Then \(\widetilde{M}\), the blow-up of \(M\) along \(S\),  admits a metric \(\bar{\eta}\) such that:
	\begin{itemize}
		\item \(\pi\colon E\rar S\) is a Riemannian submersion when restricted to \(E=\pi^{-1}(S)\);
		\item \(\pi\colon\wt{M}\rar M\) is an isometry outside an open neighbourhood of \(E\).
	\end{itemize}
\end{prop}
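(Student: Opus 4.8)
The plan is to build $\bar\eta$ by interpolating between a ``pulled-back'' metric near the exceptional divisor $E$ and the original metric $\eta$ away from it, using a partition of unity, and then to verify the two submersion/isometry claims. First I would set up the local picture. Since $S$ is the most singular (minimal-codimension) stratum, it is closed (Proposition \ref{prop:properties-dim-strat}(1)), and by linearization (Theorem \ref{thm:lin}) there is a saturated tubular neighbourhood $\phi\colon V\subset NS\to U\subset M$ with $\G|_U\simeq(\G_S\times_S NS)|_V$; the blow-up is $\wt M = (M\setminus S)\cup_\phi \wt V$ with $\wt V=\{(v,l)\in V\times\mathbb P(NS)\mid v\in l\}$. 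On $\wt V$ the natural map to work with is the blow-down $\pi\colon\wt V\to V$, together with the bundle projection $p_S\colon NS\to S$, so that $\pi\colon E=\mathbb P(NS)\to S$ is exactly the projectivized-bundle projection.

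Next I would construct the metric near $E$. Starting from the simplicial metric $\eta$, restrict it to $S$ to get $\eta_S$; this makes $p_S\colon(NS,\cdot)\to(S,\eta_S)$ something I can turn into a Riemannian submersion. The idea is to equip $NS$ with a metric of ``horizontal $\oplus$ vertical'' type — horizontal part pulled back from $\eta_S$, vertical part a fibre metric on $NS$ — and then to produce the metric on $\wt V$ by the pull-back construction of Lemma \ref{lem:pull-back-metric} applied to $\pi\colon\wt V\to V$ (or directly on $\wt V\subset V\times\mathbb P(NS)$). Concretely I would use Lemma \ref{lem:pull-back-metric} with $f=p_S$ a Riemannian submersion to obtain a metric on the relevant fibre product, arranged so that the composite $\pi\colon E\to S$ is a Riemannian submersion; this is what gives the first bullet. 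I would choose the fibre metric on $\mathbb P(NS)$ to be the standard Fubini–Study-type metric induced by $\eta$ on the fibres $N_xS$, which is automatically invariant under the relevant structure.

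I would then glue. Choosing a $\G_S$-invariant bump function $\chi$ (averaged using a proper Haar system, which exists since $\G$ is proper, via Lemma \ref{lem:Haar-system-blow-up}/the averaging of Proposition \ref{prop:average-metric}) that equals $1$ on a neighbourhood of $E$ and vanishes outside a slightly larger saturated neighbourhood, I set $\bar\eta := \chi\,(\text{blow-up metric near }E) + (1-\chi)\,\pi^*\eta$. Outside the support of $\chi$ this is exactly $\pi^*\eta$ and $\pi$ is an isometry there, giving the second bullet. Positive definiteness of the convex combination is immediate since both pieces are metrics. The remaining point is to check that the first bullet is unaffected by the gluing: on $E$ itself I arrange $\chi\equiv 1$, so $\bar\eta|_E$ is the submersion metric and $\pi\colon E\to S$ is a Riemannian submersion as required.

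The main obstacle I expect is the compatibility at the interface — verifying that the locally defined blow-up metric on $\wt V$ and the restriction of $\pi^*\eta$ match up well enough that the convex interpolation retains the Riemannian-submersion property of $\pi|_E$, and that the metric is smooth across $E$ (the blow-down $\pi$ is not a submersion along $E$, so one must check that the pulled-back and interpolated expressions extend smoothly to the projective bundle rather than degenerating). I would handle this by doing all constructions fibrewise over $S$ in the model $NS\simeq\G_S\times_S NS$, where the radial coordinate on the fibres separates the vertical direction that degenerates under $\pi$ from the $E$-directions, and by keeping $\chi\equiv1$ on a whole neighbourhood of $E$ so that no interpolation occurs precisely where $\pi$ fails to be submersive.
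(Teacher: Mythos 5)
Your overall strategy is sound and the two bullets do come out of it, but you take a genuinely different route from the paper. The paper never works on the incidence variety \(\wt{V}\subset V\times\mathbb{P}(NS)\) directly: it modifies the metric on \(NS\setminus S\) using a groupoid-adapted orthogonal splitting \(T(NS\setminus S)=B\oplus K\oplus K^\bot\), where \(B=H\oplus H'\) is built from the orbits of the linearized action (\(H_v=(T_v(\G_x\cdot v))^\bot\subset T_vO_v\)), \(K\) is the radial line and \(K^\bot\) the angular directions, and the essential device is the conformal rescaling \(\tfrac{1}{|v|^2}\) on \(K^\bot\); the metric is then extended over \(E\) by a limit \(\lim_{\tau\rar 0}\bar{\eta}^0_{\tau v}(\tau V,\tau W)\). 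Your construction — a product-type metric on \(V\times_S\mathbb{P}(NS)\) with a Fubini--Study fibre metric, restricted to \(\wt{V}\) — produces essentially the same germ of metric along \(E\) and has the advantage that smoothness and non-degeneracy across \(E\) are manifest, so the extension step disappears. What it costs: (i) \autoref{lem:pull-back-metric} does not apply to \(\pi\colon\wt{V}\rar V\) as you suggest, since \(\pi\) fails to be a submersion along \(E\) and \(\wt{V}\) is a proper submanifold of the fibre product \(V\times_S\mathbb{P}(NS)\), not the fibre product itself; you must restrict the product metric and then verify by hand, at a point \((0,l)\in E\), that \(T\wt{V}\) splits as \(T_lE\oplus\wt{K}\) and that the \(E\)-block makes \(\mathbb{P}(NS)\rar S\) Riemannian — this is precisely the content of the first bullet and is the step you defer to your "main obstacle" paragraph (it does go through, provided the zero section of \(V\) is isometric onto \((S,\eta_S)\)). (ii) Your remark that positive definiteness of the interpolation is immediate "since both pieces are metrics" needs the caveat that \(\pi^*\eta\) degenerates on \(E\); it is rescued only because \(1-\chi\) vanishes there, as you arrange. (iii) The paper's explicit bundles \(H,H',B,K\) and the \(\tfrac{1}{|v|^2}\) structure are re-used verbatim in \autoref{prop:blow-up-maps-Riemannian} (to lift Morita equivalences Riemannianly) and in the Gromov--Hausdorff estimate of \S\ref{subsec:orbitspace}, so adopting your construction would require reworking those arguments as well.
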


Note that this result is similar to a Theorem of Alexandrino, \cite[Theorem 1.2]{Alexandrino10}, for a singular foliation. The proof we give is a simplification of that proof, where the simplification comes from the existence of the groupoid \(\G\). The proof will be used later on in \autoref{prop:blow-up-maps-Riemannian}.

\begin{proof}
	The proof starts with an alteration of \(\eta^0\) around \(E\). Let \(S\subset U\subset M\) be a neighbourhood such that \(\G\) is linearizable on \(U\): \(\G|_U\simeq \G_S\times_S NS\rrar NS\). By using a partition of unity, subordinated to \(U\) and \(S^c\), it is enough to show that on \(\wt{U}\) there exists a metric such that the first condition of the proposition holds.

	Firstly, note that for any \(v\in N_xS\setminus\{0\}\), we have an inclusion of \(\G_x\cdot v\subset O_v = s^{-1}(x)\cdot v\). This gives an inclusion on tangent spaces  \(T_v(\G_x\cdot v) \subset T_v O_v\subset T_v(NS)\). Using the metric on \(NS\), which is inherited by the one on \(M\), we can define:
	\begin{align}
    \label{eq:defn:bundle-H-metric-base}
		H_v:= (T_v(\G_x\cdot v) )^\bot \subset T_v O_v.
	\end{align}
	One can check that \(\dim(H_v)=\dim(L)\) for any leaf \(L\subset S\). Hence \(H\rar NS\setminus S\) is a vector bundle. Moreover, if we let \(p\colon NS\rar S\) be the projection, we see that \(H \cap \ker(dp)=0\). Let \(H'\rar NS\setminus S\) and \(B\rar NS\setminus S\) be defined as:
	\begin{align}
    \label{eq:defn:bundle-H2,B-metric-base}
		&
		H'
        := \left(H\oplus \ker(dp)\right)^\bot,
        &
        B
        := H\oplus H'.
	\end{align}
	Therefore we get that \(T(NS\setminus S) = B \oplus \ker(dp)\). Note that \(\ker(dp)\simeq B^\bot\) by the projection \({\rm pr}\colon TNS \rar B^\bot\). Hence \(\ker(dp)\) inherits a metric \({\rm pr}^*(\eta^0|_{B^\bot})\). Using this metric, \(\ker(dp)\) splits as \(K\oplus K^\bot\) with \(K\) a line bundle generated by \(\frac{d}{d\tau}|_{\tau =0} \tau v \).
	Hence we get \(T(NS\setminus S) = B\oplus K \oplus K^\bot\). With respect to this splitting, we can define:
	\begin{align*}
		\bar{\eta}^0_v 
   	 = 
    	\bpm    
    	p^*(\eta^0_S) 
    	& 
    	0 
    	& 
    	0
    	\\  
    	0 
    	&  
    	{\rm pr}^*(\eta^0|_{B^\bot}) 
    	& 
    	0 
    	\\ 
    	0 
    	& 
    	0 
    	& 
    	\frac{1}{|v|^2} {\rm pr}^*(\eta^0|_{B^\bot})
    	\epm .
	\end{align*}

	The next step is to extend \(\bar{\eta}^0\) over \(E= \pi^{-1}(S)\), to \(\wt{NS}\). Note that on \(\wt{NS}\setminus E\), we have that \(\pi\colon\wt{NS}\setminus E\rar U\setminus S\) is an isomorphism and hence we can use \(\bar{\eta}^0\). On \(E\), we see that \(T\wt{NS} = TE \oplus \wt{K}\), with \(\wt{K}\) a line bundle generated by paths of the form \(\gamma_v(\tau)=(\tau v,[v])\). To extend \(\bar{\eta}^0\), we ask \(TE\) and \(\wt{K}\) to be orthogonal, \(||\dot{\gamma_v}(0)|| = ||v||\) and on \(T_{(0,[v])}E\) we can view vectors as \((0, [V])\) for \(V\in T_v(NS)\) and we can define:
	\begin{align*}
		\bar{\eta}( (0,[V]), (0,[W]) )
        := \lim_{\tau\rar 0} \bar{\eta}^0_{\tau v} ( \tau V, \tau W ).
	\end{align*}
	Here \(\tau V \in T_{\tau v} (NS)\) is the derivative of the path \(\gamma_{\tau V}(\xi):= \tau \gamma_V(\xi)\).

	We are left to check that \(\pi\) is a Riemannian submersion. However, if \((0,[V])\in \ker (d\pi)\), then \(\tau V\in \ker(dp)\) for all \(\tau\). Hence on \(\ker(d\pi)^\bot\), we use \(p^*(\eta_S^0)\), from which it follows that \(\pi\) is a Riemannian submersion. 

\end{proof}

%END OF TEX FILE
\subsection{Metric on the groupoid}
\label{subsec:metric-groupoid}
Using the metric \(\bar{\eta}\) we have just constructed on \(M\), we will construct one on \(\G^{(k)}\) for all \(k\). This construction mirrors the proof of Theorem \ref{thm:existence-simp.metric} in \cite{HoyoFernandes14}. 
First note that \(s\colon\G\rar M\) is a submersion and hence we have the submersion groupoid \(\G\times_M \G\rrar \G\). Denote the \((k-1)\)st nerve of this submersion groupoid by \(\G^{[k]}\). One easily realizes that there exists diffeomorphisms \(\psi_k\colon\G^{[k]}\rar \G^{(k)}\), for all \(k\), which do not form a groupoid map, defined by:
	\begin{align*}
		\psi_k(g_k,\hdots,g_1):=(g_kg_1^{-1},\hdots ,g_2 g_1^{-1}, g_1).
	\end{align*}
\begin{lem}
\label{lem:metric-submersion-groupoid-wrong}
	The metrics \(\psi_k^*(\eta^k)\) on \(\G^{[k]}\) form a simplicial metric on the submersion groupoid \(\G^{[2]}\rrar \G\).
\end{lem}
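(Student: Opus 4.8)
The plan is to verify the two defining conditions of a simplicial metric directly for the submersion groupoid \(\G^{[2]}\rrar\G\), exploiting that, by the very definition of the transported metric \(\psi_k^*(\eta^k)\), each map \(\psi_k\colon(\G^{[k]},\psi_k^*(\eta^k))\to(\G^{(k)},\eta^k)\) is a Riemannian isometry. Consequently the whole statement reduces to a purely combinatorial identification: I must understand how the simplicial structure maps (the face maps and the symmetric group actions) of the nerve of the submersion groupoid correspond, through the diffeomorphisms \(\psi_\bullet\), to the simplicial structure maps of the nerve of \(\G\) itself, where the simplicial metric hypothesis already gives me everything I need.

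The observation I would establish first is that a point \((g_k,\dots,g_1)\in\G^{[k]}\), i.e.\ a \(k\)-tuple of arrows with common source, is nothing but a homogeneous description of a \(k\)-simplex of the nerve of \(\G\): it records the vertices \(v_0:=s(g_1)\) and \(v_i:=t(g_i)\) together with all edges \(g_ig_j^{-1}\), and \(\psi_k\) is precisely this identification. In other words, the nerve of the submersion groupoid of \(s\) is the \v{C}ech (d\'ecalage) nerve of \(s\colon\G\to M\), which is the nerve \(N\G\) with the distinguished vertex \(v_0\) never deleted. I would then check that the face map \(\delta_i\colon\G^{[k]}\to\G^{[k-1]}\) of the submersion groupoid, which forgets the factor \(g_i\), corresponds to deleting the vertex \(v_i\); hence \(\psi_{k-1}\circ\delta_i\circ\psi_k^{-1}\) is one of the face maps \(d_1,\dots,d_k\) of \(N\G\), namely every face map except the one deleting \(v_0\), which is absent from the d\'ecalage. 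Since \((\G,\eta)\) is a Riemannian Lie groupoid, each such \(d_i\colon(\G^{(k)},\eta^k)\to(\G^{(k-1)},\eta^{k-1})\) is a Riemannian submersion, and since \(\delta_i=\psi_{k-1}^{-1}\circ d_i\circ\psi_k\) is a Riemannian submersion composed with isometries, it is itself a Riemannian submersion. This gives the first condition.

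For the second condition I would verify that the permutation action of \(S_k\) on \(\G^{[k]}\) (permuting the \(k\) factors) corresponds under \(\psi_k\) to the action of the stabiliser of \(v_0\) inside \(S_{k+1}\), which permutes \(v_1,\dots,v_k\). A check in low degree confirms the pattern: for \(k=2\) the transposition \((g_2,g_1)\mapsto(g_1,g_2)\) is sent to \((c_2,c_1)\mapsto(c_2^{-1},c_2c_1)\), which is exactly the transposition in \(S_3\) fixing \(v_0\). As the full group \(S_{k+1}\) acts by isometries on \((\G^{(k)},\eta^k)\), so does any subgroup, and transporting back through the isometry \(\psi_k\) shows that \(S_k\) acts by isometries on \((\G^{[k]},\psi_k^*(\eta^k))\).

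The only genuine obstacle is the bookkeeping in the identification above: writing \(\psi_{k-1}\circ\delta_i\circ\psi_k^{-1}\) and the conjugated permutation explicitly and matching them to the simplicial faces and vertex permutations of \(N\G\), while keeping the ordering conventions in the definition of \(\psi_k\) straight. Once this combinatorial dictionary between the \v{C}ech nerve of \(s\) and \(N\G\) is fixed, both metric conditions follow formally from the simplicial metric hypothesis on \(\G\) together with the tautological isometry property of \(\psi_k\); no further analytic input is required.
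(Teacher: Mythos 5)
Your proposal is correct and follows essentially the same route as the paper's proof: under the isometries \(\psi_k\) the face maps of \(\G^{[\bullet]}\) are matched with all but one of the face maps of the nerve of \(\G\), and the \(S_k\)-action with the stabiliser of \(v_0\) inside \(S_{k+1}\), so both conditions are inherited from the simplicial metric on \(\G\). Your write-up merely makes explicit the combinatorial dictionary (the d\'ecalage identification and the \(k=2\) computation) that the paper leaves to the reader.
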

\begin{proof}
	One can easily check that the action of \(S_{k}\) on \(\G^{[k]}\) is included in the action of \(S_{k+1}\) on \(\G^{(k)}\) through \(\psi_k\). Similarly, each simplicial map of the groupoid \(\wt{\G}^{[k]}\rar \wt{\G}^{[k-1]}\) matches a simplicial map of \(\wt{\G}^{(k)}\rar \wt{\G}^{(k-1)}\) under \(\psi_k\). Note that the latter has one more simplicial map, but that does not matter. Hence it follows that the simplicial maps are Riemannian submersions, and the lemma follows. 
\end{proof}

Now using these, we can define a simplicial metric on \(\wt{\G}^{[k]}\):
\begin{lem}
	\label{lem:metric-submersion-groupoid}
	Let \(\bar{\eta}^0\) be the metric on \(\wt{M}\) of  \autoref{prop:metric-base-blow-up} and let \(\bar{\eta}^k\) on \(\wt{\G}^{[k]}\simeq \G^{[k]} \times_M \wt{M}\) be the pull-back metrics, as defined in \autoref{eq:defn:pull-back-metric}:
    \begin{align}
    \label{eq:metric-submersion-groupoid}
		& 
        \bar{\eta}^{k}
        := \psi_k^*(\eta^k) * \bar{\eta}^0.
    \end{align}
	Together, they form a simplicial metric on \(\wt{\G}^{[2]}\rrar\wt{\G}\). Moreover, the projections \(\pi^k\colon\wt{\G}^{[k]}\rar\G^{[k]}\) are Riemannian submersions when restricted to the exceptional divisor \(E\subset\wt{M}\).
\end{lem}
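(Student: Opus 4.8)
\emph{Overview.} The plan is to verify the two defining properties of a simplicial metric for the family $\bar\eta^{k}$ on the submersion groupoid $\wt\G^{[2]}\rrar\wt\G$, and then the statement about $\pi^{k}$, working throughout with the explicit shape \eqref{eq:defn:pull-back-metric} of the pull-back metric together with the simplicial metric $\psi_k^{*}(\eta^k)$ on $\G^{[k]}$ from \autoref{lem:metric-submersion-groupoid-wrong}. First I would check that $\bar\eta^k$ is well-defined as a pull-back metric, i.e.\ that the map $s\colon(\G^{[k]},\psi_k^{*}\eta^k)\rar(M,\eta^0)$ playing the role of $f$ in \autoref{lem:pull-back-metric} is a Riemannian submersion. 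Under the isometry $\psi_k\colon(\G^{[k]},\psi_k^{*}\eta^k)\rar(\G^{(k)},\eta^k)$ the common-source map $s$ is identified with the map $\G^{(k)}\rar M$ sending a composable string to the source of its composite, which is a composition of face maps of the nerve of $\G$; since each such face map is a Riemannian submersion for $\eta$ and a composition of Riemannian submersions is again one, $s$ is a Riemannian submersion. \autoref{lem:pull-back-metric} then yields that $\bar\eta^k=\psi_k^{*}(\eta^k)*\bar\eta^0$ is a metric and that $p'\colon\wt\G^{[k]}\rar\wt M$ is a Riemannian submersion.

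\emph{Face maps.} This is the heart of the argument. Under $\wt\G^{[k]}\simeq\G^{[k]}\times_M\wt M$, each face map $\delta_i\colon\wt\G^{[k]}\rar\wt\G^{[k-1]}$ of the submersion groupoid is of the form $d_i\times\mathrm{id}_{\wt M}$, where $d_i\colon\G^{[k]}\rar\G^{[k-1]}$ is a face map of $\G^{[2]}\rrar\G$ satisfying $s\circ d_i=s$. I would isolate the following general fact: if $g\colon(A,\eta_A)\rar(B,\eta_B)$ is a Riemannian submersion over $(N,\eta_N)$, meaning $f_B\circ g=f_A$ with $f_A,f_B$ Riemannian submersions, and $f_P\colon P\rar N$ is arbitrary, then $g\times\mathrm{id}_P\colon A\times_N P\rar B\times_N P$ is a Riemannian submersion for the pull-back metrics $\eta_A*\eta_P$ and $\eta_B*\eta_P$. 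The proof is a direct tangent computation: the $(g\times\mathrm{id})$-vertical space is $\ker(dg)\times\{0\}$, and since $\ker(dg)$ is $f_A$-vertical, the $\eta_A*\eta_P$-orthogonal complement consists of those $(u,w)$ with $u$ horizontal for $g$; on such pairs, using $df_B\circ dg=df_A$ and that $dg$ is an isometry on $g$-horizontal vectors, the value of $\eta_A*\eta_P$ on $(u,w),(u',w')$ equals that of $\eta_B*\eta_P$ on $(dg(u),w),(dg(u'),w')$. Applying this with $g=d_i$, $N=M$, $P=\wt M$ shows each $\delta_i$ is a Riemannian submersion.

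\emph{Symmetry and $\pi^k|_E$.} The group acting on $\wt\G^{[k]}=\G^{[k]}\times_M\wt M$ permutes the $\G^{[k]}$-factors and fixes $\wt M$; this action commutes with $s$ and with $p'$ and is an isometry of $\psi_k^{*}\eta^k$ by \autoref{lem:metric-submersion-groupoid-wrong}, so it preserves each of the three summands of \eqref{eq:defn:pull-back-metric} and hence $\bar\eta^k$. For the final claim I would restrict to $E=\pi^{-1}(S)$, where $\wt\G^{[k]}|_E\simeq\G_S^{[k]}\times_S E$ and $\pi^k$ is the projection $p$ onto $\G_S^{[k]}$. A computation parallel to the one above shows that the $p$-horizontal vectors are the pairs $(a,b)$ with $b$ horizontal for $\pi|_E\colon E\rar S$; since $\pi|_E$ is a Riemannian submersion by \autoref{prop:metric-base-blow-up}, the terms $(p')^{*}\bar\eta^0$ and $(s\circ p)^{*}\eta^0$ in \eqref{eq:defn:pull-back-metric} cancel on such vectors, leaving $\bar\eta^k=p^{*}\psi_k^{*}\eta^k$ there, which is exactly the assertion that $\pi^k|_E$ is a Riemannian submersion onto $(\G_S^{[k]},\psi_k^{*}\eta^k)$.

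I expect the main obstacle to be the face-map step: establishing the general fibre-product lemma for Riemannian submersions and confirming that the face maps of $\wt\G^{[2]}\rrar\wt\G$ genuinely take the product form $d_i\times\mathrm{id}$ over $M$ with $s\circ d_i=s$. Once this compatibility is in place, both the symmetry property and the $\pi^k|_E$ statement follow from the same explicit cancellation in \eqref{eq:defn:pull-back-metric}.
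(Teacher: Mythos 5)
Your proposal is correct and follows essentially the same route as the paper: the $S_k$-action and the face maps are inherited from the $\G^{[k]}$-factor of $\wt{\G}^{[k]}\simeq\G^{[k]}\times_M\wt{M}$, so the simplicial properties of $\psi_k^*(\eta^k)$ transfer through the pull-back metric \eqref{eq:defn:pull-back-metric}, and over $E$ the identification $\wt{\G}^{[k]}_E\simeq\G_S^{[k]}\times_S E$ reduces the last claim to \autoref{lem:pull-back-metric} applied to the Riemannian submersion $\pi_E\colon E\rar S$. You supply two details the paper leaves implicit --- that $s\colon(\G^{[k]},\psi_k^*\eta^k)\rar(M,\eta^0)$ is a Riemannian submersion (so the pull-back metric is defined) and the general fibre-product lemma for face maps of the form $d_i\times\mathrm{id}_{\wt{M}}$ --- and your cancellation computation for $\pi^k|_E$ is just the proof of \autoref{lem:pull-back-metric} written out, so this is an elaboration rather than a different argument.
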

\begin{proof}
	Firstly, note that the action of \(S_k\) on \(\wt{\G}^{[k]}=\G^{[k]}\times_M \wt{M}\) restricts to the action of \(S_k\) on \(\G^{[k]}\). Hence by construction of the metric it follows directly that \(S_k\) acts by isometries using the same statement for \(\G^{[k]}\). Similarly, the face maps are just the face maps of \(\G^{[k]}\rar \G^{[k-1]}\) and we conclude that they are indeed Riemannian submersions. 
	
	Finally, when we restrict to \(E\) we get that \(\wt{\G}^{[k]}_E \simeq \G_S^{[k]}\times_S E\) and hence \(\pi^k_E\colon\wt{\G}^{[k]}_E\rar \G^{[k]}\) is the pull-back map of the Riemannian submersion \(\pi_E\colon E\rar S\), and therefore a Riemannian submersion itself, using  \autoref{lem:pull-back-metric}.
\end{proof}

Using maps \(\wt{\psi}_k\), similar to \(\psi_k\) but  for \(\wt{\G}\), we again get metrics on \(\wt{\G}^{(k)}\). The proof of \autoref{lem:metric-submersion-groupoid-wrong} shows that by using these \(\psi_k\) we lost some information: not all face maps were used for example. Hence the resulting metrics \(\wt{\psi}_k^*(\bar{\eta})\) do not form a simplicial metric. The rest of this subsection is devoted to altering these metrics such that they become simplicial.

\begin{prop}
	\label{prop:metric-groupoid-blow-up}
	Let \(\G\rrar M\) be a proper groupoid endowed with a simplicial metric \(\eta\) and let \(S\subset M\) be a saturated submanifold. Then \(\wt{\G}\), the blow-up of \(\G\) along \(S\), admits a metric \(\wt{\eta}\) such that:
	\begin{itemize}
		\item \(\pi\colon\wt{\G}|_E\rar \G_S\) is a Riemannian submersion when restricted to \(E=\pi^{-1}(S)\);
        \item \(\pi\colon\wt{\G}\rar \G\) is an isometry outside an open neighbourhood of \(E\).
	\end{itemize}
\end{prop}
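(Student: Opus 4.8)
The goal is to construct a simplicial metric $\wt{\eta}$ on the blow-up groupoid $\wt{\G}$ whose restriction to the exceptional divisor recovers a Riemannian submersion $\pi\colon\wt{\G}|_E\rar\G_S$, while agreeing with $\eta$ away from $E$. The strategy mirrors the proof of existence of simplicial metrics in \cite{HoyoFernandes14}: we have already produced, via \autoref{lem:metric-submersion-groupoid}, a collection of metrics $\bar\eta^k$ on the nerve $\wt{\G}^{[k]}$ of the \emph{submersion} groupoid $\wt{\G}^{[2]}\rrar\wt{\G}$, which is genuinely simplicial and for which $\pi^k$ restricts to a Riemannian submersion on $E$. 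The problem, as noted after \autoref{lem:metric-submersion-groupoid-wrong}, is that transporting these metrics to the true nerve $\wt{\G}^{(k)}$ via the diffeomorphisms $\wt\psi_k$ destroys simpliciality: not every face map of $\wt{\G}^{(k)}$ corresponds to a face map of the submersion groupoid. So the plan is to repair the metrics $\wt\psi_k^*(\bar\eta^k)$ by the averaging procedure of \autoref{prop:average-metric}.

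\textbf{Key steps.} First I would transport the submersion-groupoid metrics to define tentative metrics $\wt\psi_k^*(\bar\eta^k)$ on each $\wt{\G}^{(k)}$. Second, I would apply the averaging construction of \autoref{prop:average-metric} to symmetrize these metrics over the groupoid action, using the Haar system $\wt\mu$ of \autoref{lem:Haar-system-blow-up} and the connection $\wt\sigma$ of \autoref{lem:connection-blow-up}; averaging produces metrics that are invariant under the $S_{k+1}$-action and for which the remaining face maps become Riemannian submersions, exactly as in the original proof. The third step is to verify that averaging preserves the two boundary properties we care about. For the second bullet this is immediate, since away from an open neighbourhood of $E$ both the connection, Haar system, and metrics agree with their $\G$-counterparts, so $\wt\psi_k^*(\bar\eta^k)$ coincides with $\eta^k$ there and averaging changes nothing. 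For the first bullet, I would use the last statement of \autoref{prop:average-metric}: since $\pi^k\colon\wt{\G}^{[k]}_E\rar\G_S^{[k]}$ is an equivariant Riemannian submersion (\autoref{lem:metric-submersion-groupoid}) and the tangent lifts of the actions commute with $\pi$ (the final lemma of \autoref{subsec:blow-up-properties}), averaging on both sides preserves the Riemannian submersion property. Restricting to $E$ and passing through $\wt\psi_k$ then yields the desired submersion $\pi\colon\wt{\G}|_E\rar\G_S$.

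\textbf{Main obstacle.} The delicate point is the compatibility of averaging with the restriction to $E$. Averaging is defined by integrating $\wt\mu^{\wt x}$-densities pulled back along tangent lifts $T\wt\theta$, and one must check that these operations commute with restriction to the exceptional divisor: concretely, that the averaged metric on $E$ equals the average (over $\G_S$) of the metric $\bar\eta^k|_E$, so that the equivariant Riemannian submersion property on $E$ is genuinely inherited. This hinges on the fact (\autoref{lem:Haar-system-blow-up}) that $\wt s^{-1}(\wt x)\simeq s^{-1}(\pi(\wt x))$ so the integration domain is unchanged under $\pi$, and on the commutativity of $T\wt\theta$ with $\pi$ established in the preceding lemma. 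I expect the remaining verifications — that the transported metrics are nondegenerate, that the $S_{k+1}$-symmetry genuinely holds after averaging, and that the push-forward/pull-back compatibilities of \autoref{prop:average-metric} apply with the face maps as the relevant submersions — to be routine bookkeeping once this central commutation is in place.
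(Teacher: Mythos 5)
Your proposal follows essentially the same route as the paper: start from the simplicial metric \(\bar\eta^k\) on the submersion-groupoid nerve constructed in \autoref{lem:metric-submersion-groupoid}, average using the lifted Haar system \(\wt{\mu}\) and connection \(\wt{\sigma}\), and exploit the compatibility of \(\wt{\mu}\), \(\wt{\sigma}\) and the tangent lifts with \(\pi\) (your ``main obstacle'') to show that the Riemannian-submersion property of \(\pi^k\) over \(E\) survives averaging --- this is exactly the displayed computation in the paper's proof. The one bookkeeping point to correct is that the averaging is performed on \(\wt{\G}^{[k+1]}\), one nerve level up, with respect to the free and proper right \(\wt{\G}\)-action whose quotient is \(\wt{\G}^{(k)}\), and \(\wt{\eta}^k\) is then obtained by pushing down along the quotient map \(\wt{\phi}_{k+1}\) --- not by transporting \(\bar\eta^k\) through the diffeomorphism \(\wt{\psi}_k\) and averaging on \(\wt{\G}^{(k)}\) itself; this index shift is precisely why the construction consumes the full simplicial metric rather than just a \(2\)-metric.
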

\begin{proof}
	Let \(\bar{\eta}\) be the simplicial metric on \(\wt{\G}^{[k]}\) of  \autoref{lem:metric-submersion-groupoid} and let \(\psi_k^*(\eta^k)\) be the simplicial metric on \(\G^{[k]}\). Note that \(\G\) acts on the source map \(\G^{[k]}\rar M\) via 
	\begin{align*}
		(g_k,\hdots,g_1)\cdot g_0 = (g_k\cdot g_0,\hdots,g_1\cdot g_0)
	\end{align*}
	with quotient map \(\phi_k\colon\G^{[k]}\rar  \G^{(k-1)}\)
	\begin{align*}
		\phi_k(g_k,\hdots,g_1):= (g_k\cdot g_{k-1}^{-1},\hdots, g_2\cdot g_1^{-1})
	\end{align*}
	Similarly, \(\wt{\G}\) acts on \(\wt{\G}^{[k]}\). The following diagram shows our current situation:
	\begin{center}
	\begin{tikzcd}
		\hdots  
        \arrow[r, shift left] 
        \arrow{r} \arrow[r, shift right]
        & 
        (\wt{\G}^{[2]} ,\bar{\eta}^2)
        \arrow[r, shift left] \arrow[r, shift right] \arrow{d}{\wt{\phi}_2}
        &
        (\wt{\G} , \bar{\eta}^1)
        \arrow{r}{\pi^1} \arrow{d}{\wt{\phi}_1}
        &
        (\G, \psi_1^*(\eta^1)) 
        \arrow{d}{\phi_1}
        &
        (\G^{[2]} ,\psi_2^*(\eta^2)) 
        \arrow[l, shift left] \arrow[l, shift right] \arrow{d}{\phi_2}
        &
        \hdots \arrow[l, shift left] \arrow[l,shift right] \arrow{l}
        \\
        \hdots \arrow[r, shift left] 
        \arrow{r} \arrow[r, shift right]
        &
        \wt{\G} \arrow[r, shift left] 
        \arrow[r, shift right]
        &
        \wt{M} 
        \arrow{r}{\pi}
        & 
        (M,\eta^0) 
        &
        (\G,\eta^1)  
        \arrow[l, shift left] \arrow[l, shift right]
        &
        \hdots 
        \arrow[l, shift left] \arrow[l,shift right] \arrow{l}
	\end{tikzcd}
	\end{center}
	By  \autoref{lem:pull-back-metric} when we restrict the map \(\pi^k\) to \(E\subset M\) they become Riemannian submersions.  Now let \(\sigma\) be a connection  on \(\G\), \(\mu\) a Haar system on \(\G\) and let \(\wt{\sigma}\) and \(\wt{\mu}\) be the corresponding connection and Haar system on \(\wt{\G}\) as in \autoref{subsec:blow-up-properties}. We can now average the upper row, and push the metrics down to define:
	\begin{align}
		\label{eq:defn:blow-up-metric-groupoid}
		\wt{\eta}^k:=(\wt{\phi}_{k+1})_*(Av(\bar{\eta}^{k+1})).
	\end{align}
	Doing the same on the right hand side of the diagram leads to:
	\begin{center}
	\begin{tikzcd}[column sep=1.5em]
		\hdots  
        \arrow[r, shift left] \arrow{r} \arrow[r, shift right]
        & 
        (\wt{\G}^{[2]} ,{\rm Av}(\bar{\eta}^2))
        \arrow[r, shift left] \arrow[r, shift right] \arrow{d}{\wt{\phi}_2}
        &
        (\wt{\G} , {\rm Av}(\bar{\eta}^1))
        \arrow{r}{\pi^1} \arrow{d}{\wt{\phi}_1}
        &
        (\G, {\rm Av}(\psi_1^*(\eta^1)) )
        \arrow{d}{\phi_1}
        &
        (\G^{[2]} ,{\rm Av}(\psi_2^*(\eta^2)) )
        \arrow[l, shift left] \arrow[l, shift right] \arrow{d}{\phi_2}
        &
        \hdots \arrow[l, shift left] \arrow[l,shift right] \arrow{l}
        \\
        \hdots \arrow[r, shift left] 
        \arrow{r} \arrow[r, shift right]
        &
        (\wt{\G},\wt{\eta}^1) \arrow[r, shift left] 
        \arrow[r, shift right]
        &
        (\wt{M} ,\wt{\eta}^0)
        \arrow{r}{\pi}
        & 
        (M,\eta^0) 
        &
        (\G,\eta^1)  
        \arrow[l, shift left] \arrow[l, shift right]
        &
        \hdots 
        \arrow[l, shift left] \arrow[l,shift right] \arrow{l}
	\end{tikzcd}
	\end{center}
	Note that on the right hand side of the diagram, the metrics \(\psi_i^*(\eta^i)\) were already \(\phi_i\)-transverse with push-forward metrics \(\eta^i\) and hence do not change.
	
	We are left to show the two properties of \(\wt{\eta}\) of the proposition. Note that averaging and restricting to a saturated submanifold commute as we only use elements of \(\G\) and \(\wt{\G}\) which belong to the restriction. Therefore, away from \(E\),  \(\bar{\eta}^k  = \psi_k^*(\eta^k)\), \(\sigma=\wt{\sigma}\) and \(\mu=\wt{\mu}\), and hence the averaging and pushing down leads to \(\eta^i\). On \(E\), we know that \(\pi^k_E\colon(\wt{\G}_E^{[k]},\bar{\eta}^k_E)\rar(\G_S,\psi_k^*(\eta^k_S))\) is a Riemannian submersion. Let \(G=(g_k,\hdots,g_1)\in \G_S^{[k]}\) and \(G\cdot g = (g_k\cdot g,\hdots g_1\cdot g)\). We compute:
	\begin{align*}
		{\rm Av}
		&
		(\psi_k^*(\eta_S^k))_G^*
        (\alpha,\beta)
        =
        \int_{g\in s^{-1}(s(g_1))} 
        \!\!\!\!\!\!\!\!\!
        \psi_k^*(\eta_S^k)_{G\cdot g}^* \left( (T\theta_g)^* (\alpha), (T\theta_g)^*(\beta)  \right)\, \mu^{s(g_1)}(g)
        \\
        &
        =
        \int_{(g,\wt{x})\in \wt{s}^{-1}(s(g_1),\wt{x})}  
        \!\!\!\!\!\!\!\!\!\!\!\!\!\!\!\!\!\!\!\!\!\!\!\!\!\!\!
        \psi_k^*(\eta_S^k)_{G\cdot g}^* \left( (T\theta_g)^* (\alpha), (T\theta_g)^*(\beta)  \right) \, d\wt{\mu}^{\wt{s}(g_1,\wt{x})}(g,\wt{x})
        \\ 
        &
        =
        \int_{(g,\wt{x})\in \wt{s}^{-1}(s(g_1),\wt{x})}     
        \!\!\!\!\!\!\!\!\!\!\!\!\!\!\!\!\!\!\!\!\!\!\!\!\!\!\!
         (\bar{\eta}^k)^*_{(G,\wt{x})\cdot(g,\wt{x})}  \left( (d\pi^k)^* \circ (T\theta_g)^* (\alpha), (d\pi^k)^*\circ(T\theta_g)^* (\beta)  \right)      \, d\wt{\mu}^{\wt{s}(g_1,\wt{x})}(g,\wt{x})
        \\
        &
        =
        \int_{(g,\wt{x})\in \wt{s}^{-1}(s(g_1),\wt{x})}      
        \!\!\!\!\!\!\!\!\!\!\!\!\!\!\!\!\!\!\!\!\!\!\!\!\!\!\!
        (\bar{\eta}^k)^*_{(G,\wt{x})\cdot(g,\wt{x})}  \left( (T\wt{\theta}_{(g,\wt{x})})^*  \circ (d\pi^k)^* (\alpha), (T\wt{\theta}_{(g,\wt{x})})^*  \circ (d\pi^k)^* (\beta)  \right)      \, d\wt{\mu}^{\wt{s}(g_1,\wt{x})}(g,\wt{x})
        \\
        &
        =
        {\rm Av}(\bar{\eta}^k)^*_{(G,\wt{x})}\left((d\pi^k)^* (\alpha),(d\pi^k)^* (\beta)   \right).	
	\end{align*}
	Here the first and last equality are by definition of the average, the second by choice of \(\wt{\mu}\), the third by \(\pi_E\) being a Riemannian submersion before the averaging, and the fourth equation by the choice of \(\sigma\). This computation shows that \(\pi^k\) is still a Riemannian submersion after averaging. Therefore, three out of four maps of the following diagram are Riemannian submersions:
	\begin{center}
	\begin{tikzcd}
		(\wt{\G}^{[k]}_E,{\rm Av}(\bar{\eta}^k))
        \arrow{r}{\pi^k_E}
        \arrow{d}{\wt{\phi}_k}
        &
        (\G_S^{[k]}, {\rm Av}(\psi_k^*(\eta^k))  )
        \arrow{d}{\phi_k}
        \\
        (\wt{\G}^{(k-1)},\wt{\eta}^{k-1}) 
        \arrow{r}{\pi^{k-1}_E}
        &
        (\G^{k-1},\eta^k)
	\end{tikzcd}
	\end{center}
	Hence \(\pi^{k-1}_E\) is a Riemannian submersion.
\end{proof}

\begin{rem}
	Note that in the proof, we use the \(k\)th metric of \(\G\) to get the \((k-1)\)st metric on \(\wt{\G}\). This explains why we use simplicial metrics instead of \(2\)-metrics.
\end{rem}

The following theorem now follows.

\begin{thm}
\label{thm:metric-desingularization}
	Let \(\G\rrar M\) be a Riemannian proper groupoid. Then its desingularization \(\wt{\G}\) admits a simplicial metric \(\wt{\eta}\) such that:
	\begin{itemize}
		\item \(\pi\colon\wt{\G}|_E\rar \G_S\) is a Riemannian submersion when restricted to \(E=\pi^{-1}(S)\);
        \item \(\pi\colon\wt{\G}\rar \G\) is an isometry outside an open neighbourhood of \(E\).
	\end{itemize}
\end{thm}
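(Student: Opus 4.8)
The plan is to deduce the theorem by iterating \autoref{prop:metric-groupoid-blow-up}, since a desingularization is by definition obtained from \(\G\) through finitely many blow-ups. Concretely, by the inductive proof of \autoref{thm:action-resolution} there is a finite tower
\[
\G = \G_0 \xleftarrow{\pi_1} \G_1 \xleftarrow{\pi_2} \cdots \xleftarrow{\pi_N} \G_N = \wt{\G},
\]
in which \(\G_i\) is the blow-up of \(\G_{i-1}\) along its most singular stratum \(S_i\subset M_{i-1}\), and \(\pi=\pi_1\circ\cdots\circ\pi_N\) is the total blow-down map. As \(\G\) is Riemannian it carries a simplicial metric \(\eta=\eta_0\), which is the only place the hypothesis enters.

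First I would run the induction over the height \(N\) of the tower. Assuming \(\G_{i-1}\) carries a simplicial metric \(\eta_{i-1}\), I apply \autoref{prop:metric-groupoid-blow-up} to the pair \((\G_{i-1},\eta_{i-1})\) and the submanifold \(S_i\); this is legitimate because \(S_i\) is a most singular stratum and hence closed with separated components (\autoref{prop:properties-dim-strat}), so the blow-up and its metric are well defined. The proposition yields a simplicial metric \(\eta_i\) on \(\G_i\) for which \(\pi_i\) restricts to a Riemannian submersion \(\G_i|_{E_i}\rar (\G_{i-1})_{S_i}\) over the exceptional divisor \(E_i=\pi_i^{-1}(S_i)\), and is an isometry outside an open neighbourhood \(W_i\) of \(E_i\). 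Setting \(\wt{\eta}:=\eta_N\) produces a simplicial metric on \(\wt{\G}\), and it remains only to transport the two local properties across the composition.

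The isometry statement composes cleanly: outside \(\bigcup_i(\pi_{i+1}\circ\cdots\circ\pi_N)^{-1}(W_i)\) the map \(\pi\) is a finite composite of isometries, hence an isometry, and this set is an open neighbourhood of the total exceptional locus, in particular of \(E=\pi^{-1}(S)\) with \(S=S_1\). For the Riemannian-submersion statement I would factor \(\pi|_E=\pi_1\circ(\pi_2\circ\cdots\circ\pi_N)|_E\), combine the base-case submersion \(\pi_1|_{E_1}\colon\G_1|_{E_1}\rar\G_S\) with the claim that the later blow-downs restrict to a Riemannian submersion \((\pi_2\circ\cdots\circ\pi_N)|_E\colon\wt{\G}|_E\rar\G_1|_{E_1}\), and use that a composite of Riemannian submersions is again one.

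The hard part will be exactly this last claim: the strata \(S_j\) with \(j\geq 2\) are permitted to meet the first exceptional divisor \(E_1\), so I must verify that blowing them up does not destroy the submersion structure already arranged over \(S\). I expect to handle this by strengthening the inductive hypothesis so that at every stage the total blow-down restricted to the preimage of \(E_1\) is a Riemannian submersion onto \(E_1\), and then to feed this back into the inductive step. The mechanism is locality: the metric is altered only inside \(W_j\), so over the part of the transform of \(E_1\) disjoint from \(S_j\) nothing changes, while over \(S_j\cap E_1\) the submersion property follows by applying the same factorisation to the restricted maps, using that the pull-back metric of \autoref{lem:pull-back-metric} and the averaged metric of \autoref{prop:average-metric} are compatible with restriction to saturated submanifolds, exactly as already exploited in the proof of \autoref{prop:metric-groupoid-blow-up}. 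In this way the genuine mathematical content stays entirely contained in \autoref{prop:metric-groupoid-blow-up}, and the theorem reduces to careful bookkeeping of the exceptional divisors through the tower.
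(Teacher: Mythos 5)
Your proposal is correct and follows essentially the same route as the paper, which derives the theorem by iterating \autoref{prop:metric-groupoid-blow-up} over the finite tower of blow-ups (the paper in fact dismisses this with a single sentence, ``The following theorem now follows''). Your additional care about later strata meeting earlier exceptional divisors, and the strengthened inductive hypothesis to handle it, addresses a point the paper leaves implicit and is sound.
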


%END OF TEX FILE

\subsection{Morita invariance}
\label{subsec:metric-morita}
In the last part of this section, we combine all the previous results and prove that the desingularizations of two Morita equivalent proper Riemannian Lie groupoids are again Morita equivalent. This statement entails more than \autoref{thm:blow-up-morita}, as we also ask the metrics to be equivalent.

\begin{defn}
	A generalized morphism  \((P,\alpha',\alpha)\colon(\G',\eta')\drar(\G,\eta)\) with a metric \(\eta_P\) on \(P\) is called \emph{Riemannian} if \(\alpha'\)  is a Riemannian submersion. 
	Similarly, we define a \emph{Riemannian Morita equivalence} to be a Morita equivalence with a metric, such that both \(\alpha\) and \(\alpha'\) are Riemannian submersions.
\end{defn}

Note that since for any generalized morphism \(P\colon\G'\drar \G\) we have that \(M'\simeq P/\G\), and \(P\) being Riemannian implies that \(\eta_P\) is invariant under the \(\G\)-action.
Recall that generalized morphisms can also be defined by so-called \emph{fractions}, i.e., a groupoid maps \(\G'\lar \h\rar \G\), such that \(\h\rar \G'\) is a weak equivalence. In \cite{HoyoFernandes16} del Hoyo and Fernandes define a fraction to be \emph{Riemannian} if all the maps \(\h^{(k)}\rar \G'^{(k)}\) are Riemannian submersions for \(k\leq 2\). When working with simplicial metrics instead of \(2\)-metrics, it is natural to ask \(\h^{(k)}\rar \G'^{(k)} \) to be a Riemannian submersion for all \(k\).

The correspondence between generalized morphisms and fractions is given by \(\h = \G'\times_{M'}\times P \times_M \G\), where \(s(g',p,g) := p\cdot g\) and \(t(g',p,g)= g'\cdot p\), and the groupoid maps are given by the projections. We will refer to this Lie groupoid as the \emph{double action groupoid} corresponding to \(P\).  One can check that \(\h\rar \G'\) is indeed a weak equivalence and hence \(\h\), being Morita equivalent to a proper groupoid, is a proper Lie groupoid as well. Conversely, given a fraction \(\G'\lar \h\rar\G\), the corresponding bibundle is given by \(P:=\G'\times_{M'} N \times_M \G/\h\). In the next proposition we show that our notion of a Riemannian bibundle corresponds to a Riemannian fractions.

\begin{prop}
	Let \((P,\alpha',\alpha,\eta_P)\colon(\G',\eta')\drar(\G,\eta)\) be a Riemannian generalized morphism and let \(\h:=\G'\times_{M'}\times P\times_M \G\rrar P\) be the associated fraction. Then \(\eta_P\) induces a simplicial metric on \(\h\) for which \(\h\) is a Riemannian fraction. Conversely, if \(\G'\lar\h\rar\G\) is a Riemannian fraction, then \(\eta_\h\) induces a metric \(\eta_P\) on \(P:=\G'\times_{M'} N \times_M \G/\h\) such that  \((P,\eta_P)\) is a Riemannian bibundle.
\end{prop}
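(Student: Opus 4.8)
The plan is to treat both directions with the two devices already in place: the pull-back metric of \autoref{lem:pull-back-metric}, which equips a fibred product with a metric making one projection a Riemannian submersion, and the averaging of \autoref{prop:average-metric}, which restores invariance and transversality without spoiling a prescribed Riemannian submersion. The first step for the forward direction is to describe the nerve of the double action groupoid. Regarding $\h$ as the action groupoid of the $\G'\times\G$-action on $P$ with moment map $(\alpha',\alpha)$, its nerve is the iterated fibred product $\h^{(k)}\simeq\G'^{(k)}\times_{M'}P\times_M\G^{(k)}$, with the outer structure maps $\G'^{(k)}\rar M'$ and $\G^{(k)}\rar M$ matched to $\alpha'$ and $\alpha$; for $k=0,1$ this returns $P$ and $\G'\times_{M'}P\times_M\G$, and the projection $a^{(k)}\colon\h^{(k)}\rar\G'^{(k)}$ is the $k$-th level of the fraction map $\h\rar\G'$. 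On each $\h^{(k)}$ I would build a metric $\bar\eta^{k}$ by applying \autoref{lem:pull-back-metric} twice: the maps $\G'^{(k)}\rar M'$ and $\G^{(k)}\rar M$ are Riemannian submersions (composites of face maps of $\eta'$ and $\eta$) and $\alpha'$ is one by hypothesis, so forming first $P\times_M\G^{(k)}$ along $\G^{(k)}\rar M$ and then $\G'^{(k)}\times_{M'}(P\times_M\G^{(k)})$ along the induced submersion to $M'$ yields a metric for which $a^{(k)}$ is a Riemannian submersion, i.e.\ the Riemannian-fraction condition holds.

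With $\bar\eta^{k}$ in hand, the face maps that only compose $\G'$- and $\G$-arrows, or that use the right $\G$-action on $P$ — which is isometric, since $M'\simeq P/\G$ forces $\eta_P$ to be $\G$-invariant — are already Riemannian submersions; only the extremal face map, which invokes the left $\G'$-action on $P$, and the $S_{k+1}$-symmetry may fail to preserve $\bar\eta^{k}$. To repair this I would mirror the proof of \autoref{thm:existence-simp.metric} in the refined form of \autoref{prop:metric-groupoid-blow-up}: choose a Haar system and connection on the proper groupoid $\h$ compatible with $a$, then average and push the $\bar\eta^{k}$ down. The computation of \autoref{prop:metric-groupoid-blow-up} then applies essentially verbatim, producing simplicial metrics $\wt\eta^{k}$ for which the $a^{(k)}$ remain Riemannian submersions, the point being that $a$ is equivariant and the chosen data are compatible with it, so the intertwining identity used there survives the averaging.

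For the converse, let $N$ be the object manifold of the Riemannian fraction $\G'\lar\h\rar\G$ and set $R:=\G'\times_{M'}N\times_M\G$. Using that $a^{(0)}\colon N\rar M'$ and $t'\colon\G'\rar M'$ are Riemannian submersions, I equip $R$ with a pull-back metric for which the projection $R\rar\G'$ followed by $t'$ is a Riemannian submersion onto $M'$. Since $\h$ acts freely and properly on $R$ with quotient $P$, \autoref{prop:average-metric} produces an $\h$-transverse average whose push-down is a metric $\eta_P$ on $P$. Because this composite $R\rar M'$ is $\h$-invariant, the last part of \autoref{prop:average-metric} shows it stays a Riemannian submersion after averaging, and its push-down along the quotient $R\rar P$ is exactly $\alpha'$; hence $\alpha'\colon P\rar M'$ is a Riemannian submersion and $(P,\eta_P)$ is a Riemannian bibundle.

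The hard part is the second step of the forward direction. Both the extremal face map and the $S_{k+1}$-symmetry invoke the left $\G'$-action on $P$, which is not assumed isometric, and naively symmetrising by averaging over that action would tilt the horizontal distribution of $\alpha'$ and destroy the Riemannian submersion $a^{(k)}$. The resolution is to average only against Haar-system and connection data compatible with $a$ — precisely the compatibility engineered for $\wt\mu$ and $\wt\sigma$ in \autoref{subsec:blow-up-properties} — so that the averaging is transverse to, and therefore harmless for, the fibres of $a^{(k)}$; checking this compatibility is where the genuine work lies.
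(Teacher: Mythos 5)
Your proposal follows essentially the same route as the paper's proof: equip the nerve of the double action groupoid with the iterated pull-back metric $\psi'^*(\eta'^k)*\eta_P*\psi^*(\eta^k)$ so that the fraction map to $\G'^{[k]}$ is a Riemannian submersion by construction, then average with a Haar system and connection on $\h$ and push down, using the equivariance clause of \autoref{prop:average-metric} (applied to the $\h$-actions on both $\h^{[k]}$ and $\G'^{[k]}$ with the \emph{same} averaging data) to see that the submersion property survives; the converse is likewise the paper's argument of averaging the pull-back metric on $\G'\times_{M'}N\times_M\G$ and pushing down the double-fibred-product submersion onto $\G'$ to recover $\alpha'$. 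The only cosmetic difference is that your "hard part" is resolved in the paper not by any special compatibility of the Haar system with $a$ beyond equivariance, but simply by the last statement of \autoref{prop:average-metric}.
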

\begin{proof}
	For the first part of the proposition, we mimic the proof of  \autoref{prop:metric-groupoid-blow-up}.  \(\h^{[k]}\) can be written as \(\bar{\eta}_\h := \G'^{[k]}\times_{M'} P \times_M \G^{[k]}\) and hence caries the metric \(\psi'^*(\eta'^k) * \eta_P * \psi^*(\eta^k)\) with \(\psi\colon\G^{[k]}\rar \G^{(k)}\) the standard isomorphism. Using \(\psi'^*(\eta'^k)\) on \(\G'^{[k]}\), we get that \(\h^{[k]}\rar \G'^{[k]}\) is a Riemannian submersion if and only if \(P\times_M \G^{[k]}\rar M'\) is. This is the composition of the Riemannian submersion \(\alpha'\) with the projection map \(P\times_M \G^{[k]}\rar P\). The latter is a Riemannian submersion as well, which follows from \(t\colon \G\rar M\) being a Riemannian submersion. Hence \(\h^{[k]}\rar \G'^{[k]}\) is a Riemannian submersion.
	
	Now \(\h\) acts on \(\h^{[k]}\), but also on \(\G'^{[k]}\), by just using the \(\G'\) multiplication. Note that the metric for the latter action is invariant since it is already invariant for the \(\G'\)-action. By picking a Haar system and connection on \(\h\), we can average and push down both \(\bar{\eta}_\h\) and \(\psi'(\eta')\). Since the \(\h\) action on \(\G'^{[k]}\) preserves the metric, the resulting metric on \(\G'\) is again \(\eta'\). Moreover, the fact that the maps \(\h^{[k]}\rar \G'^{[k]}\) being Riemannian submersions implies that the maps \(\h^{(k)}\rar \G^{(k)}\) with the new metric on \(\h\) are Riemannian submersions as well. Hence the first part of the proposition follows.

	For the second part, using the Haar system and connection on \(\h\), we can average the pull-back metric \(\eta'*\eta_N* i^*(\eta)\) on \(\G'\times_{M'}N\times_M\G\) and then push it down to \(P\), to get a metric \(\eta_P\). The map \(P\rar M'\) is given by the quotient of the projection \(\G'\times_{M'} N \times_M \G\rar \G'\). Here \(\h\) acts on \(\G'\), by just considering the \(\G'\) component of \(\h\). Note that this projection is a double fibred pull-back of Riemannian submersions as in the following diagram:
	\begin{center}
    \begin{tikzcd}
    	\G'\times_{M'} N \times_M \G 
        \arrow{r}
        \arrow{dd}
        &
        N\times_M \G 
        \arrow{r}
        \arrow{d}
        &
        \G 
        \arrow{d}{t}
        \\
        &
        N 
        \arrow{r}
        \arrow{d}{\phi'}
        &
        M
        \\
        \G'
        \arrow{r}
        &
        M'
    \end{tikzcd}
	\end{center}
	Hence \(\G'\times_{M'}N \times_M\G\rar \G'\) is a Riemannian submersion for the metric \(\eta'*\eta_N*\eta\). Note that the metric on \(\G'\) is \(\h\) invariant, and hence averaging on \(\G'\) and pushing down to \(M'\) does not change the metric on \(M'\). Combining these two statements proves the second part of the proposition.
\end{proof}

Since extending the metric to the blow-up only works well when we blow up a closed stratum, we can not expect to lift any generalized morphism. However, when the generalized morphism is a Morita equivalence, \autoref{lem:strata-morita-equivalence} shows  we can compare strata and hence we can try to  lift the morphism. The following proposition shows that this is indeed possible.

\begin{prop}
	\label{prop:blow-up-maps-Riemannian}
	Let \((P,\alpha',\alpha,\eta_P)\colon(\G',\eta')\drar (\G,\eta)\) be a Morita equivalence between proper groupoids, \(S'\subset M'\) and \(S\subset M\) equivalent closed strata and let \(\wt{\G}'\) and \(\wt{\G}\) be the blow-ups of \(\G'\) and \(\G\) along these submanifolds. Then \(\wt{P}\), the generalized morphism between \(\wt{\G}'\) and \(\wt{\G}\), admits a  metric such that it is a Riemannian generalized morphism.
\end{prop}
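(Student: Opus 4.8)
The plan is to exhibit, on the blow-up $\wt{P}$ of $P$ along $S_P := \alpha^{-1}(S) = (\alpha')^{-1}(S')$, a metric for which the lifted moment map $\wt{\alpha}'$ is a Riemannian submersion onto $\wt{M}'$, the latter equipped with the blow-up metric $\bar{\eta}'$ produced by \autoref{prop:metric-base-blow-up} applied to $(\G',\eta')$ and $S'$. Recall that $S_P$ is a closed submanifold, since $\alpha'$ is a submersion and $S'$ is closed, and that $\wt{\alpha}'\colon\wt{P}\rar\wt{M}'$ is again a submersion by \autoref{lem:blow-up-maps}. The key structural input is that $\bar{\eta}'$ is, by construction, a genuine smooth metric on all of $\wt{M}'$, including across the exceptional divisor $E'\subset\wt{M}'$.

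First I would choose a horizontal distribution $\mathcal{H}\subset T\wt{P}$ complementary to $\ker d\wt{\alpha}'$, that is, a connection for the submersion $\wt{\alpha}'$, arranged so that away from the exceptional divisor $E_P := \pi_P^{-1}(S_P)$ it agrees with the $\eta_P$-orthogonal complement of $\ker d\alpha'$. I would then define $\eta_{\wt{P}}$ by declaring $\mathcal{H}$ and $\ker d\wt{\alpha}'$ orthogonal, setting $\eta_{\wt{P}}|_{\mathcal{H}} := (\wt{\alpha}')^{*}\bar{\eta}'$, and equipping the vertical bundle $\ker d\wt{\alpha}'$ with any smooth fibre metric that restricts to $\eta_P$ away from $E_P$; such a fibre metric may be glued in with a partition of unity, which is harmless because the fibre directions play no role in the Riemannian submersion condition. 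With these choices $\eta_{\wt{P}}$ coincides with $\eta_P$ away from $E_P$. Because $\bar{\eta}'$ is smooth across $E'$ and $\mathcal{H}$ is a smooth distribution, $(\wt{\alpha}')^{*}\bar{\eta}'|_{\mathcal{H}}$ extends smoothly and non-degenerately across $E_P$, and $d\wt{\alpha}'$ restricts by construction to an isometry of $(\ker d\wt{\alpha}')^{\bot}=\mathcal{H}$ onto $(T\wt{M}',\bar{\eta}')$. Hence $\wt{\alpha}'$ is a Riemannian submersion onto $(\wt{M}',\bar{\eta}')$ on all of $\wt{P}$.

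The metric so constructed need not be invariant under the right $\wt{\G}$-action whose orbits are the fibres of $\wt{\alpha}'$. To remedy this, and thereby obtain a genuine bibundle metric in keeping with the remark that a Riemannian morphism has $\G$-invariant $\eta_P$, I would average $\eta_{\wt{P}}$ over the $\wt{\G}$-action using the lifted Haar system $\wt{\mu}$ of \autoref{lem:Haar-system-blow-up} and the lifted connection $\wt{\sigma}$ of \autoref{lem:connection-blow-up}. Since $\wt{\G}$ acts freely and properly on $\wt{P}$ and $\eta_{\wt{P}}$ is already $\wt{\alpha}'$-transverse with push-forward $\bar{\eta}'$, \autoref{prop:average-metric} guarantees that ${\rm Av}(\eta_{\wt{P}})$ is $\wt{\G}$-invariant with the same push-forward $\bar{\eta}'$; equivalently, $\wt{\alpha}'\colon(\wt{P},{\rm Av}(\eta_{\wt{P}}))\rar(\wt{M}',\bar{\eta}')$ is still a Riemannian submersion. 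This is exactly the assertion that $\wt{P}$ is a Riemannian generalized morphism. One may also organise the argument through the double action groupoid $\h=\G'\times_{M'}P\times_M\G\rrar P$, blowing it up along $S_P$, applying \autoref{prop:metric-groupoid-blow-up} verbatim, and reading off the bibundle metric through the correspondence between Riemannian fractions and Riemannian bibundles.

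The main obstacle, and the point where the construction of \autoref{prop:metric-base-blow-up} is genuinely used, is the behaviour across the exceptional divisor: one needs $\bar{\eta}'$ to be a smooth, non-degenerate metric on $\wt{M}'$ up to and along $E'$ — precisely the content of the limiting $\frac{1}{|v|^2}$-construction there — so that $(\wt{\alpha}')^{*}\bar{\eta}'|_{\mathcal{H}}$ is well behaved across $E_P$. If, beyond the stated proposition, one wants $\wt{P}$ to be a Riemannian \emph{Morita} equivalence, as required for \autoref{thm:riemannian-morita-desingularization}, the delicate step is to produce a single metric making both $\wt{\alpha}$ and $\wt{\alpha}'$ Riemannian submersions onto $(\wt{M},\bar{\eta})$ and $(\wt{M}',\bar{\eta}')$ at once; this forces a splitting of $T\wt{P}$ into the two orbit foliations and a common transverse complement, as in the pull-back metric $\eta'*\eta_P*\eta$ on the double action groupoid, and requires checking the compatibility of the two blow-up base metrics along $E_P$.
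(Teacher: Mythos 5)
Your construction does discharge the proposition as literally worded: since the paper's definition of a Riemannian generalized morphism only requires \(\wt{\alpha}'\) to be a Riemannian submersion, choosing a horizontal complement to \(\ker d\wt{\alpha}'\), pulling back the base metric there, putting an arbitrary vertical fibre metric, and then averaging (the averaged metric has the same push-forward by \autoref{prop:average-metric}, since your metric is \(\wt{\alpha}'\)-transverse by construction) produces a valid metric. This is a genuinely different, and much more elementary, route than the paper's: the paper does not impose the submersion condition by fiat, but instead applies \autoref{prop:metric-base-blow-up} to the double action groupoid \(\h=\G'\times_{M'}P\times_M\G\rrar P\) and the stratum \(S_P\) to blow up the \emph{given} metric \(\eta_P\), and then verifies — by tracking how \(d\alpha\) and \(d\alpha'\) interact with the splittings \(B_P\oplus K_P\oplus K_P^\perp\) of \autoref{eq:defn:bundle-H2,B-metric-base} and with the limiting extension over \(E_P\) — that this single canonical metric makes \emph{both} \(\wt{\alpha}\) and \(\wt{\alpha}'\) Riemannian submersions, before averaging over \(\wt{\h}\). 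One small correction to your version: the target of \(\wt{\alpha}'\) should be \(\wt{M}'\) with the degree-zero component of the simplicial metric \(\wt{\eta}'\) of \autoref{prop:metric-groupoid-blow-up} (obtained after averaging), not the intermediate metric \(\bar{\eta}'\) of \autoref{prop:metric-base-blow-up}; your pullback argument goes through verbatim with that target, so this is cosmetic.

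The substantive issue is the one you yourself flag and then set aside. The proposition is invoked precisely to prove \autoref{thm:riemannian-morita-desingularization}, which requires a Riemannian \emph{Morita} equivalence, i.e.\ one metric on \(\wt{P}\) for which both moment maps are simultaneously Riemannian submersions. Your construction is asymmetric by design — it never sees \(\eta\) or \(\wt{\alpha}\) — and there is no reason the vertical/horizontal choices you make for \(\wt{\alpha}'\) are compatible with the submersion condition for \(\wt{\alpha}\); indeed the two conditions constrain the metric on complementary pieces of \(T\wt{P}\) that overlap nontrivially. So while your argument proves the sentence as written, it cannot be substituted for the paper's proof without breaking the downstream theorem: the entire content of the paper's argument is the simultaneous compatibility, achieved because the blow-up metric of \autoref{prop:metric-base-blow-up} is canonical given compatible tubular neighbourhoods and linearizations, which can be chosen coherently for \(P\), \(M\), and \(M'\) at once. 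If you want your proof to serve the paper's purposes, you must carry out the "delicate step" you mention rather than defer it.
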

\begin{proof}
	Let \(\h:= \G'\times_{M'}P\times_M \G\rrar P\) be the double action groupoid, corresponding to \(P\) and let \(S_P:=(\alpha')^{-1}(S')\), which is a closed stratum.  \autoref{prop:metric-base-blow-up} gives a metric \(\bar{\eta}_P\) on \(\wt{P}\) such that the blow-down map \(\pi_P\colon\wt{P}\rar P\) is an isometry outside a neighbourhood of \( E_P:=\pi^{-1}(S_P)\) and a Riemannian submersion when restricted to \(E_P\). We will first show that we can make the choices in the proof of  \autoref{prop:metric-base-blow-up} such that the maps \(\alpha\colon(P,\bar{\eta})\rar(M,\bar{\eta}^0)\) and \(\alpha'\colon(P,\bar{\eta})\rar(M',\bar{\eta'}^0)\) are Riemannian submersions. Afterwards, we alter the metric on \(\wt{P}\) such that the same holds for the metric \(\wt{\eta}\) on \(M\) and \(\wt{\eta}'\) on \(M'\).

	First of all, we can pick tubular neighbourhoods \(U\), \(U'\) and \(U_P\) of \(S\), \(S'\) and \(S_P\) on which the groupoids linearize, which satisfy \(U=\alpha(U_P)\) and \(U'=\alpha'(U_P)\). Similarly we can pick compatible partitions of unity. Since \(\alpha\) and \(\alpha'\) are Riemannian submersions and the metrics on the blow-ups agree with the metrics of the original manifold outside the tubular neighbourhoods, we see that \(\wt{\alpha}\) and \(\wt{\alpha}'\) are  Riemannian submersions outside \(\pi_P^{_1}(U_P)\) as well. Similarly, the metric is  \(\wt{\G}\)- and \(\wt{\G}'\)-invariant outside \(\pi_P^{-1}(U_P)\), sincriginal metric was \(\G\)- and \(\G'\)-invariant.  Hence we can restrict our attention to the tubular neighbourhoods. 

	As in the proof of  \autoref{prop:metric-base-blow-up}, we will start with proving the claims on \(\pi^{-1}_P( U_P\setminus S_P )\). Let \(B_P,H_P,H_P'\rar NS_P\) and \(B,H,H'\rar NS\)  be the vector bundles as in \autoref{eq:defn:bundle-H2,B-metric-base}. We will first show that the differential of  \(d\alpha\colon NS_P\rar NS\), which we will denote by \(T\alpha\), respects the splittings \(T(NS_P\setminus S_P) = B_P \oplus K_P \oplus K_P^\bot\) and \(T(NS\setminus S) = B\oplus K \oplus K^\bot\). First note that:
	\begin{align*}
		\dim(\ker(dp_P)) = \codim(S_P) = \codim(S) = \dim(\ker(dp)).
	\end{align*}
	Hence \(T\alpha\) is a fiber-wise linear isomorphism between \(K_P\oplus K_P^\bot\) and \(K\oplus K^\bot\). Since \(\ker T\alpha\subset B_P\), and the metrics on \(NS_P\) and \(NS\) is inherited by the ones on \(P\) and \(M\), we see that \(T\alpha\) is in fact a fiber-wise isometry with respect to the metrics \({\rm pr}_P^*(\eta_P|_{B_P^\bot})\) and \({\rm pr}^*(\eta^0|_{B^\bot})\). Moreover, since \(d\alpha\) is linear, it is clear that \(K_P\) is mapped onto \(K\). This proves that \(\bar{\eta}_P\) and \(\bar{\eta}^0\) agree on \(K\oplus K^\bot\), even with the correction term. Therefore, we are left to consider \(B_P\) and \(B_P'\). We can extend \(\alpha\) to a groupoid morphism \(\h\rar \G\) in the obvious way. Hence for any \(v_P\in N_pS_P\) and \(v:=d\alpha(v_P)\in N_xS\) we get 
	\begin{align*}
		d\alpha(h\cdot v_p) = \alpha(h)\cdot v,
	\end{align*}
	showing that \(d\alpha\) maps \(\h_p\cdot v_p\mapsto \G_x\cdot v\) and \(s^{-1}(p)\cdot v_p\mapsto \G_x\cdot v\). This also implies that \(H_P'\) is mapped to \( H'\).  Therefore \(T\alpha\colon B_P\rar B\) is Riemannian with respect to the original metric. Since \(S_P\rar S\) is Riemannian as well and twisting the metric on \(H\) to \(p^*(\eta_S^0)\) keeps \(T\alpha\) Riemannian. Hence, with the metric \(\bar{\eta}\) on \(NS\setminus S\) and \(\bar{\eta}_P\) on \(NS_P\setminus S_P\), the map \(\wt{\alpha}\) is still a Riemannian submersion. 

	Finally, we consider the extension over the exceptional divisors \(E_P\) and \(E\). Similar to before, the line bundle \(\wt{K}_P\) is mapped to the line bundle \(K\) by \(d\wt{\alpha}\) and the metrics agree here. On \(TE_P\), the limit definition of the metric on the orthogonal complement of \(K\) immediately implies that \(\wt{\alpha}\) is still Riemannian.  Note that the same argument holds for \(\wt{\alpha}'\), since we did not make any choices.

	In order to conclude the proposition, we let \(\wt{\eta}_P:=t_*(\mbox{Av}(\bar{\eta}^1_P))\), with \(\bar{\eta}^1_P = \bar{\eta}'^1*\bar{\eta}_P * i^*(\bar{\eta}^1)\) on \(\wt{\h} \simeq \wt{\G}'\times_{\wt{M'}} \wt{P}\times_{\wt{M}} \wt{\G}\), where \(\bar{\eta}^1\) is the metric of \autoref{lem:metric-submersion-groupoid}. The map \(\wt{\alpha'}\) is the quotient map of  
	\begin{align*}
  		(\wt{\G}' \times_{\wt{M}'} \wt{P} \times_{\wt{M}} \wt{\G},
        \bar{\eta'}^1*\bar{\eta}_P* i^*(\bar{\eta}^1))
        \rar (\G',\bar{\eta'}^1)
     \end{align*}   
	with respect to the right \(\wt{\h}\)-action. This map is a Riemannian submersion as it is the double pull-back of Riemannian submersions in the following diagram:
	\begin{center}
	\begin{tikzcd}
		\wt{\G}' \times_{\wt{M}'} \wt{P} \times_{\wt{M}} \wt{\G}
        \arrow{r}
        \arrow{dd}
        &
		\wt{P}\times_{\wt{M'}}\wt{\G}  
        \arrow{r}
        \arrow{d}
        &
        \wt{\G}
        \arrow{d}{\wt{t}}
        \\
        &
        \wt{P}
        \arrow{r}
        \arrow{d}{\wt{\alpha}'}
        &
		\wt{M}
        \\
        \wt{\G}'
        \arrow{r}
        &
        \wt{M}'
	\end{tikzcd}
	\end{center}
	Note that the metric on \(\wt{\G}'\) is \(\wt{\h}\)-invariant if and only if it is \(\wt{\G}'\), and hence averaging on \(\wt{\G}'\) and pushing down to \(\wt{M}'\) gives exactly \((\wt{\eta}')^0\). Using the standard arguments it now follows that \(\wt{\alpha'}\) is a Riemannian submersion. For \(\wt{\alpha}\) a similar argument holds. Note however that the \(i^*\) will be cancelled as we use the left action of \(\wt{\h}\) on \(\wt{\G}\) instead of the right \(\wt{\G}\) action.
\end{proof}

Using \autoref{lem:strata-morita-equivalence}, we  conclude:

\begin{thm}
\label{thm:riemannian-morita-desingularization}
	The Riemannian desingularizations of Morita equivalent Riemannian groupoids are again Morita equivalent.
\end{thm}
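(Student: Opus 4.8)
The plan is to run the inductive blow-up argument of \autoref{thm:action-resolution} simultaneously on the two groupoids, transporting the Riemannian Morita equivalence across each elementary blow-up by means of \autoref{prop:blow-up-maps-Riemannian}. Concretely, I would induct on the integer $k=j(\G)-m(\G)$ governing the number of blow-ups needed for a desingularization, with $j,m$ the extremal indices of the dimension stratification from \autoref{prop:properties-dim-strat}. For the base case $k=0$ the groupoid $\G$ is regular; by \autoref{lem:strata-morita-equivalence} the leaf codimensions of $\G$ and $\G'$ agree along $P$, so $\G'$ is regular as well, and the given Riemannian Morita equivalence $(P,\eta_P)$ already serves as the equivalence between the two (trivial) desingularizations.

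For the inductive step, let $S=S^j\subset M$ be the most singular stratum of $\G$, which is closed by \autoref{prop:properties-dim-strat}(1). Since a Morita equivalence preserves leaf codimension (\autoref{lem:strata-morita-equivalence}), it preserves the maximal index $j$, so $S':=P^{-1}(S)\subset M'$ is precisely the most singular stratum of $\G'$; thus $S$ and $S'$ are equivalent closed strata. As in the proof of \autoref{prop:blow-up-maps-Riemannian} I would pick tubular neighbourhoods and partitions of unity on $M$, $M'$ and $P$ compatibly, splitting over connected components using property (2) of \autoref{prop:properties-dim-strat}, and then invoke \autoref{prop:blow-up-maps-Riemannian} — whose proof in fact shows that \emph{both} moment maps $\wt{\alpha}$ and $\wt{\alpha}'$ are Riemannian submersions — to obtain a Riemannian Morita equivalence $\wt{P}\colon\wt{\G}'\drar\wt{\G}$ between the blow-ups. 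By \autoref{prop:blow-up-proper} one has $j(\wt{\G})-m(\wt{\G})<k$, and likewise for $\wt{\G}'$, so the inductive hypothesis applies to the Riemannian Morita equivalent pair $\wt{\G},\wt{P},\wt{\G}'$. Since a Riemannian desingularization of $\G$ is exactly a Riemannian desingularization of $\wt{\G}$ continued through the remaining blow-ups, this closes the induction.

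The step I expect to require the most care is the compatibility of the metrics across the iteration. I would need to check that the metric placed on $\wt{\G}$ by \autoref{prop:blow-up-maps-Riemannian} — the averaged, pushed-down metric $\wt{\eta}$ of \autoref{prop:metric-groupoid-blow-up} — is exactly the metric with respect to which the next most singular stratum of $\wt{\G}$ is blown up in \autoref{thm:metric-desingularization}, so that the moment maps of $\wt{P}$ are Riemannian submersions for precisely the desingularization metrics on $\wt{M},\wt{M}'$ and \autoref{prop:blow-up-maps-Riemannian} can be reapplied verbatim at the next stage. This is ensured because the constructions of \autoref{prop:metric-groupoid-blow-up} and \autoref{prop:blow-up-maps-Riemannian} use the same averaging data — a Haar system and connection lifted as in \autoref{subsec:blow-up-properties} — together with the pull-back metrics of \autoref{lem:pull-back-metric}; once this identification of metrics is recorded, the finite iteration, whose ordering of strata is preserved by $P$, goes through.
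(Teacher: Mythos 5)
Your proposal is correct and follows essentially the same route as the paper: the paper's proof is precisely the finite iteration of \autoref{prop:blow-up-maps-Riemannian} over the stratum correspondence supplied by \autoref{lem:strata-morita-equivalence}, organized (as in \autoref{thm:action-resolution}) by descending $j-m$. Your explicit induction and your flagging of the metric-compatibility across successive blow-ups simply spell out what the paper leaves implicit.
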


%END OF TEX FILE
%

\section{Discussion}
\label{sec:discussion}
\subsection{Orbit space}
\label{subsec:orbitspace}
Let \(\G\rrar M\) be a proper Lie groupoid. In this last section we will discuss some consequences for the orbit space \(X:=M/\G\). 
The first result is that \(X\) carries the structure of a stratified space:
\begin{prop}
	The dimension stratification descends to a stratification of the orbit space \(X:=M/\G\).
\end{prop}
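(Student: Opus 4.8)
The plan is to let $q\colon M\to X=M/\G$ be the quotient map and to take as the strata of $X$ the connected components of the sets $q(S^j)=S^j/\G$, where the $S^j$ are as in \eqref{eq:def:strat-dim}. Since $\codim(L_x)$ is constant along leaves, each $S^j$ is saturated, so $q^{-1}(q(S^j))=S^j$ and the family $\{q(S^j)\}_j$ partitions $X$; passing to connected components yields the candidate stratification. Throughout I will use that $\G$ proper makes $X$ Hausdorff, second countable and paracompact, and that the orbit map $q$ is open and continuous. The key reduction is that, on each $S^j$, the restricted groupoid $\G|_{S^j}\rrar S^j$ is a \emph{regular} proper Lie groupoid, because by \autoref{prop:strat-dim} and \autoref{lem:dim-strata} every leaf inside $S^j$ has the fixed dimension $\dim(M)-j$.

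The topological axioms transport along $q$. For any open $U\subset M$, saturation of $S^j$ gives $q(U)\cap q(S^j)\neq\emptyset$ if and only if $U\cap S^j\neq\emptyset$; combined with openness of $q$ and local finiteness of $\{S^j\}$ on $M$, this shows $\{q(S^j)\}$, and hence its refinement into components, is locally finite. Connectedness of the strata is built in, and local closedness will follow once the Frontier condition is established.

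The main obstacle is the smooth structure on the strata. On $q(S^j)$ I would use the local linearization \eqref{eq:local-lin} around a point $x\in S^j$: by \autoref{lem:dim-strata} a slice for $S^j$ is $(N_xL_x)^{\G_x^\circ}\times O_x$, on which $\G_x^\circ$ acts trivially, so $q(S^j)$ is locally modelled on $(N_xL_x)^{\G_x^\circ}/\pi_0(\G_x)$ with $\pi_0(\G_x)$ finite by properness. This is precisely the orbit space of the regular proper groupoid $\G|_{S^j}$ and carries a natural smooth (orbifold) structure; the technical heart of the proof is checking that these finite-quotient charts are mutually compatible, i.e.\ that it is the residual isotropy $\pi_0(\G_x)$, and not the continuous part of $\G_x$, that controls the local geometry of the quotient.

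Finally I would verify the Frontier condition. For an open continuous surjection one has $q^{-1}(\overline A)=\overline{q^{-1}(A)}$, so saturation gives $\overline{q(S^j)}=q(\overline{S^j})$, and the Frontier condition on $M$ from \autoref{prop:strat-dim} then shows that the closure of each stratum is itself together with strata of strictly higher codimension type. For the required drop in dimension, note that $\dim q(S_x)=\dim\big((N_xL_x)^{\G_x^\circ}\big)$. At a frontier point $y$ with $v\in N_yL_y$ corresponding to $x$, the proof of \autoref{prop:strat-dim} embeds $(N_yL_y)^{\G_y^\circ}$ into $(N_xL_x)^{\G_x^\circ}$; the radial direction $\mathbb{R}v$ lies in $(N_xL_x)^{\G_x^\circ}$, as $\G_x=\mathrm{Stab}(v)$ fixes $v$, but not in the image of $(N_yL_y)^{\G_y^\circ}$, so the inclusion is strict and $\dim q(S_y)<\dim q(S_x)$. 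This gives the Frontier condition, whence local closedness of the strata, completing the verification that the connected components of the $q(S^j)$ stratify $X$.
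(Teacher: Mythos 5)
Your proof follows essentially the same route as the paper's: push the saturated sets $S^j$ down along the quotient map, use the local linearization slice $(N_xL_x)^{\G_x^\circ}$ to produce charts on the strata of $X$, and derive the Frontier condition on $X$ from the one on $M$ established in \autoref{prop:strat-dim}. You are in fact more explicit than the paper on two points it glosses over, namely the residual action of the finite group $\pi_0(\G_x)$ on the slice (the paper simply takes $(N_xL_x)^{\G_x^\circ}$ itself as a chart) and the strictness of the dimension drop across the frontier, which you extract from the radial direction $\mathbb{R}v$ being fixed by $\G_x$ but not by $\G_y^\circ$ — though, like the paper, you announce rather than carry out the verification that the finite-quotient charts are compatible.
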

\begin{proof}
	First note that since all strata and all the subsets \(S^i\) are saturated, they indeed define subsets of \(X\). Moreover, it is easy to see that these subsets are smooth manifolds since they have charts given by the subspaces \((N_xL_x)^{\G_x^\circ}\subset N_xL_x\). For a leaf \(L = L_x\in X\), we have that \(X_L\), the stratum of \(X\) through \(L\), has dimension equal to \(\dim( (N_xL_x)^{\G_x^\circ})\). Using an argument similar to the one in the proof of  \autoref{prop:strat-dim}, we realize that any leaf \(L'=L_y\) close to \(L\), and not in the same stratum as \(L\),  has \((N_yL_y)^{\G_y^\circ}\subset  (N_xL_x)^{\G_x^\circ}\) and hence its stratum is of lower dimension. Now the Frontier condition for the stratification on  \(M\) implies the Frontier condition of this partitioning on \(X\).
\end{proof}

In \cite{PflaumPosthumaTang14}, the authors show that if \(\G\rrar M\) is a proper Lie groupoid and \(M\) carries a so-called transversely invariant metric, the orbit space \(X\) is a metric space, with the metric given by:
\begin{align}
\label{eq:metric-orbit-space}
d_X(L,L'):= \inf \{ d(x_1,L) + \hdots + d(x_n,L_{n-1})\,|\, n\in\mathbb{N},\, x_i\in L_i,\, \forall 1\leq i\leq n-1,\, x_n\in    L' \}.
\end{align}
It is not hard to show that the zeroth component of any simplicial metric \(\eta\) on \(\G\rrar M\) is in fact such a transversely invariant metric and hence we get:

\begin{prop}
	\label{prop:X-metric-space}
	Let \(\G\rrar M\) be a proper Lie groupoid, equiped with a simplicial metric. Then \(d_X\), as in  \autoref{eq:metric-orbit-space}, turns \(X:=M/\G\) into a metric space such that the quotient map \(M\rar X\) is a submetry.
\end{prop}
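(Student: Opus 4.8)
The plan is to reduce the statement to the single verification that the zeroth component $\eta^0$ of the simplicial metric is \emph{transversely invariant}, and then to invoke the result of \cite{PflaumPosthumaTang14} quoted just above. Recall that transverse invariance of $\eta^0$ means precisely that the \emph{normal representation} of $\G$ acts by isometries: for every arrow $g\colon x\to y$ the induced linear map $N_g\colon N_xL_x\to N_yL_y$, $[v]\mapsto[d_gt(\hat v)]$ where $\hat v\in T_g\G$ is any lift with $d_gs(\hat v)=v$, is an isometry for the metrics induced on the normal spaces by $\eta^0$. Once this is known, the orbit partition of $M$ together with $\eta^0$ is the singular Riemannian foliation associated to $\G$, and \cite{PflaumPosthumaTang14} then yields that $d_X$ of \eqref{eq:metric-orbit-space} is a genuine metric on $X$.

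First I would establish transverse invariance directly from the simplicial structure, using only that the two level-one face maps $s,t\colon(\G,\eta^1)\to(M,\eta^0)$ are Riemannian submersions. Fix $g\colon x\to y$ and write $V^s=\ker d_gs$, $V^t=\ker d_gt$, with horizontal spaces $H^s=(V^s)^\bot$, $H^t=(V^t)^\bot$ taken inside $(T_g\G,\eta^1)$, and set $W:=H^s\cap H^t=(V^s+V^t)^\bot$. Since $V^s\cap V^t$ is tangent to the set of arrows $x\to y$, its dimension is $\dim\G_x$, and the orbit--isotropy relation $\dim s^{-1}(x)=\dim L_x+\dim\G_x$ gives $\dim W=\codim(L_x)=\dim N_x$. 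The key computation is that $d_gs$ restricts to an isometry of $W$ onto $(T_xL_x)^\bot$: it is an isometry on $H^s\supseteq W$ because $s$ is a Riemannian submersion, and a short orthogonality argument, using $T_xL_x=d_gs(V^t)$ together with $W\bot V^s$ and $W\bot V^t$, shows $d_gs(W)\bot T_xL_x$, whence $d_gs(W)=(T_xL_x)^\bot$ by equality of dimensions. The same argument for $t$ shows $d_gt|_W\colon W\to(T_yL_y)^\bot$ is an isometry. Choosing the lift $\hat v:=(d_gs|_W)^{-1}(v)\in W$ for $v\in(T_xL_x)^\bot$, one gets $N_g[v]=[d_gt(\hat v)]$ with $d_gt(\hat v)\in(T_yL_y)^\bot$ and $\|d_gt(\hat v)\|=\|\hat v\|=\|v\|$; hence $N_g$ is an isometry, i.e. $\eta^0$ is transversely invariant.

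With transverse invariance in hand, the metric axioms for $d_X$ are supplied by \cite{PflaumPosthumaTang14}: symmetry and the triangle inequality are built into the chain formula \eqref{eq:metric-orbit-space}, while non-degeneracy $d_X(L,L')=0\Rightarrow L=L'$ is exactly what the singular-Riemannian-foliation property guarantees. For the submetry statement I would prove both inclusions of $\pi(B_M(x,r))=B_X([x],r)$, where $B_M$ and $B_X$ denote the metric balls. One inclusion is immediate from the one-step chain estimate $d_X([x],[x'])\le d(x,x')$ obtained by taking $n=1$ in \eqref{eq:metric-orbit-space}. For the reverse inclusion, given $[x']$ with $d_X([x],[x'])<r$, one must produce an actual point $x''\in L_{x'}$ with $d(x,x'')<r$; here I would invoke the local linearization \eqref{eq:local-lin} together with the fact that, along a singular Riemannian foliation, minimizing geodesics meeting a leaf perpendicularly project to minimizing chains, so that the infimum over abstract chains is realized by a nearby point of the orbit.

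The main obstacle I expect is precisely this reverse inclusion in the submetry claim: converting the infimum over abstract chains in \eqref{eq:metric-orbit-space} into a genuine point of $L_{x'}$ within distance $r$ of $x$. This relies on the perpendicular-lifting property of the singular Riemannian foliation determined by $\eta^0$, and it is here that properness, ensuring that $X$ is Hausdorff and that the relevant infima are attained, and the transverse invariance established above are used most essentially. By contrast, the transverse-invariance computation itself is routine once the subspace $W=H^s\cap H^t$ has been introduced.
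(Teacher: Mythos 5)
Your proposal follows exactly the route the paper takes: the paper offers no formal proof, only the remark that the zeroth component of a simplicial metric is transversely invariant, after which the metric-space and submetry statements are delegated to \cite{PflaumPosthumaTang14}. Your explicit verification of transverse invariance via the subspace \(W=H^s\cap H^t\) (with the orthogonality argument showing \(d_gs(W)=(T_xL_x)^\bot\)) correctly fills in the step the paper calls ``not hard to show,'' so the two arguments agree in substance.
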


Even though \(\wt{X}\rar X\) turns out to be not a submetry, one can say something about the Gromov--Hausdorff distance between \(\wt{X}\) and \(X\) when \(M\) is compact. Note that in constructing \(\bar{\eta}^0\) on \(\wt{M}\) as in \autoref{prop:metric-base-blow-up}, one only adjusts the metric on a tubular neighbourhood. By picking smaller tubular neighbourhoods, Alexandrino shows in \cite{Alexandrino10} that for all \(\varepsilon >0\), and for all leaves \(\wt{L},\wt{L}'\subset \wt{M}\), one has:
	\begin{align*}
		|d(\wt{L},\wt{L}') - d(\pi(\wt{L}),\pi(\wt{L}')) | < \varepsilon,
	\end{align*}
with respect to the metric \(\bar{\eta}^0\) on \(\wt{M}\), which depends on \(\varepsilon\). One can show that this is still true after averaging  \(\bar{\eta}^0\) with respect to the \(\wt{\G}\)-action and hence we get:

\begin{prop}
\label{prop:gromov-hausdorff}
	Let \(\G\rrar M\) be a proper Lie groupoid with simplicial metric \(\eta\) and assume that \(M\) is compact. Let \(\wt{\G}\rrar \wt{M}\) be its blow-up. Then for all \(\varepsilon >0\), \(\wt{\G}\) admits a metric such that the Gromov--Hausdorff distance between \(X=M/\G\) and \(\wt{X}=\wt{M}/\wt{\G}\) is smaller than \(\varepsilon\).
\end{prop}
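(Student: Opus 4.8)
The plan is to produce, for each $\varepsilon>0$, an $\varepsilon$-Hausdorff approximation (or $\varepsilon$-isometry) $\bar\pi\colon\wt X\rar X$ and to invoke the standard fact that the existence of such a map bounds the Gromov--Hausdorff distance by $2\varepsilon$. Since $M$ is compact and the blown-up stratum $S$ is closed, $\wt M$ is compact, so by \autoref{prop:X-metric-space} both $X=M/\G$ and $\wt X=\wt M/\wt\G$ are compact metric spaces once $\wt\G$ is equipped with the simplicial metric $\wt\eta$ of \autoref{prop:metric-groupoid-blow-up}; the distance $d_{\wt X}$ is the one determined by its zeroth component $\wt\eta^0$.

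First I would observe that the blow-down $\pi\colon\wt M\rar M$ descends to a map $\bar\pi\colon\wt X\rar X$. It is well defined because $\pi$ carries each leaf $\wt L$ onto the leaf $\pi(\wt L)$ of $\G$, as established in \autoref{prop:blow-up-proper}, and it is surjective because $\pi$ is. Surjectivity makes $\bar\pi(\wt X)$ automatically $\varepsilon$-dense in $X$, so the only content of the $\varepsilon$-isometry criterion that remains is the distortion bound $|d_{\wt X}(\wt L,\wt L')-d_X(\bar\pi\wt L,\bar\pi\wt L')|<\varepsilon$ for all leaves $\wt L,\wt L'\subset\wt M$.

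For the non-averaged base metric $\bar\eta^0$ of \autoref{prop:metric-base-blow-up} this distortion bound is precisely the estimate recalled above from \cite{Alexandrino10}: since that construction only modifies $\eta^0$ on a tubular neighbourhood of the exceptional divisor $E$, shrinking the tube forces the $\bar\eta^0$-distance between any two leaves to approximate the distance between their $\pi$-images to within the prescribed $\varepsilon$. The hard part will be to upgrade this from $\bar\eta^0$ to the simplicial metric $\wt\eta^0$, which is obtained from $\bar\eta^0$ by the averaging of \autoref{prop:average-metric} (indispensable here, since only for an invariant metric is $\wt M\rar\wt X$ a submetry and $d_{\wt X}$ a genuine metric). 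To handle this I would use that the chain distance \eqref{eq:metric-orbit-space} sees only the transverse part of the metric, together with the fact that the groupoid acts by isometries on the normal bundle, a feature already exploited in \autoref{prop:properties-dim-strat}. The normal component of $\bar\eta^0$ is therefore already $\wt\G$-invariant, so the averaging operator fixes it and merely redistributes the metric along leaf directions; by the invisibility of leaf-direction data to \eqref{eq:metric-orbit-space}, Alexandrino's estimate then transfers to $d_{\wt X}$ computed from $\wt\eta^0$.

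With both the distortion bound and full density established, $\bar\pi$ is an $\varepsilon$-Hausdorff approximation, whence $d_{\mathrm{GH}}(\wt X,X)\le 2\varepsilon$; applying the construction to $\varepsilon/2$ gives the stated bound. I expect the genuine difficulty to lie in the averaging step: to make the slogan that ``averaging preserves transverse distances'' rigorous, one must compare the pushed-down, averaged metric $\wt\eta^0$ with $\bar\eta^0$ along the horizontal geodesics that realize the orbit distance, rather than only on a pointwise transverse cometric, and check that this comparison survives the passage through the nerve used to define $\wt\eta^0$ in \eqref{eq:defn:blow-up-metric-groupoid}.
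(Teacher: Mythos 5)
Your proposal follows essentially the same route as the paper: descend the blow-down to the orbit spaces, invoke Alexandrino's distortion estimate for the non-averaged metric $\bar{\eta}^0$ obtained by shrinking the tubular neighbourhood, and then argue that the estimate survives the averaging that produces the simplicial metric. The paper itself only asserts the averaging step with ``one can show''; your added framing via $\varepsilon$-Hausdorff approximations and your explicit flagging of the passage through the nerve in \eqref{eq:defn:blow-up-metric-groupoid} are consistent with, and somewhat more careful than, the paper's sketch.
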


Applying this a finite amount of times, leads to:

\begin{cor}
\label{cor:gromov-hausdorff-desing}
	Let \(\G\rrar M\) be a proper Lie groupoid with simplicial metric \(\eta\) and assume that \(M\) is compact. Then for all \(\varepsilon>0\), its desingularization \(\wt{\G}\rrar\wt{M}\) admits a simplicial metric, depending on \(\varepsilon\), such that the Gromov--Hausdorff distance between \(X=M/\G\) and \(\wt{X}=\wt{M}/\wt{\G}\) is smaller than \(\varepsilon\).
\end{cor}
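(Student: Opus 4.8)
The plan is to obtain the statement by iterating \autoref{prop:gromov-hausdorff} along the tower of blow-ups that produces \(\wt\G\), distributing the error budget \(\varepsilon\) evenly among the steps and then invoking the triangle inequality for the Gromov--Hausdorff distance. By the proof of \autoref{thm:action-resolution}, the desingularization is obtained by a finite sequence of blow-ups: there are proper Lie groupoids \(\G=\G_0,\G_1,\dots,\G_N=\wt\G\), with \(N\) finite, such that each \(\G_{i+1}\rrar M_{i+1}\) is the blow-up of \(\G_i\rrar M_i\) along its most singular stratum, and \(\pi_i\colon\G_{i+1}\rar\G_i\) is the corresponding blow-down map.

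First I would verify that the hypotheses of \autoref{prop:gromov-hausdorff} are inherited at every stage. By \autoref{prop:blow-up-proper} each \(\G_i\) is again a proper Lie groupoid. Since \(M\) is compact and the blow-up of a compact manifold along a closed submanifold is again compact --- the exceptional divisor being a projective bundle over the compact stratum --- each \(M_i\) is again compact. Finally, by \autoref{prop:metric-groupoid-blow-up} the simplicial metric on \(\G_i\) induces a simplicial metric on \(\G_{i+1}\), so that every intermediate orbit space \(X_i:=M_i/\G_i\) is a compact metric space by \autoref{prop:X-metric-space}. With this in hand I would apply \autoref{prop:gromov-hausdorff} at the \(i\)th step with parameter \(\varepsilon/N\): as the base metric of \autoref{prop:metric-base-blow-up} is altered only on a tubular neighbourhood that may be shrunk at will, and as (by the discussion preceding \autoref{prop:gromov-hausdorff}) Alexandrino's estimate survives the averaging that makes the metric simplicial, the resulting simplicial metric on \(\G_{i+1}\) satisfies \(d_{GH}(X_i,X_{i+1})<\varepsilon/N\). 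Since the Gromov--Hausdorff distance is a genuine metric on isometry classes of compact metric spaces, the triangle inequality then yields
\[
d_{GH}(X,\wt X)\ \le\ \sum_{i=0}^{N-1} d_{GH}(X_i,X_{i+1})\ <\ N\cdot\frac{\varepsilon}{N}\ =\ \varepsilon,
\]
which is exactly the assertion.

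I expect the main difficulty to be bookkeeping rather than geometry, since the essential estimate is already contained in \autoref{prop:gromov-hausdorff}. The points that genuinely need care are that the simplicial metric is honestly carried along the tower --- so that \autoref{prop:X-metric-space} applies and each \(X_i\) really is a compact metric space, which is what legitimizes the use of the Gromov--Hausdorff triangle inequality --- and that shrinking the tubular neighbourhood at one step does not disturb the estimate arranged at another. The latter is harmless because each blow-up is a modification supported near the single stratum being blown up, and, by property~(2) of \autoref{prop:properties-dim-strat}, the tubular neighbourhood at each stage can be taken as a disjoint union over the distinct components of that stratum; once these observations are recorded the conclusion follows immediately from the triangle inequality.
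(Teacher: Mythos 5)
Your proposal is correct and follows exactly the paper's route: the paper's entire proof of this corollary is the single line ``Applying this a finite amount of times, leads to:'', i.e., iterating \autoref{prop:gromov-hausdorff} along the blow-up tower with budget $\varepsilon/N$ per step and using the triangle inequality. Your additional bookkeeping (compactness of each $M_i$, persistence of the simplicial metric via \autoref{prop:metric-groupoid-blow-up}, applicability of \autoref{prop:X-metric-space}) is exactly the detail the paper leaves implicit.
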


As the orbit space of the desingularization \(\wt(\G)\) is an orbifold, \autoref{cor:gromov-hausdorff-desing} shows that the the orbit space of a general proper Lie groupoid is a Gromov--Hausdorff limit of orbifolds. In the framework of Riemannian groupoids, it seems natural to ask whether every proper Riemannian groupoid is a ``Gromov--Hausdorff limit" of regular proper Riemannian groupoids. A careful definition of the ``Gromov--Hausdorrff limit" of Riemannian groupoids is in demand for such an interesting statement.

%END OF TEX FILE
\subsection{Examples}
\label{subsec:examples}
In this section we discuss some examples. 
\begin{exa}
	Let \(G\) be a Lie group, acting properly on \(M\) and let \(\G:=G\times M \rrar M\) be its action groupoid. Then \(\G\) is a proper groupoid and already linear. For any \(S\subset M\), we have that \(\G_S = G\times S\) and hence \(\G_S\times_S \widetilde{NS} = G\times \widetilde{NS}\). That is, \(G\) acts on \(\wt{M}\) and \(\wt{\G}  = G\times \wt{M}\). This agrees with Duistermaat and Kolk's blow-up of proper Lie group actions in \cite{DuitstermaatKolk00}.
\end{exa}

\begin{exa}
Generalizing the previous example, we can consider a proper Lie groupoid action of a given (possibly non-proper) Lie groupoid $\G\rightrightarrows M$ on a submersion $f\colon N\to M$.  
By definition this means that the associated action groupoid $\G\ltimes N\rightrightarrows N$ is proper, and induces a stratifications on $N$ according to the theory described in \S \ref{sec:stratifications}. Once again, if we blow-up a closed saturated submanifold $S\subset N$, we find that $\G$ acts on the blow-up $\wt{M}$, so that the resolution is given by
\[
\widetilde{\G\ltimes N}\cong\G\ltimes\widetilde{N}.
\]
Our theory can therefore alternatively be viewed as generalizing the resolution of proper Lie group action to the case of proper Lie groupoid actions. Notice also that any proper Lie 
groupoid acts on its own base space, so in this sense this example is universal.

Recall that in the Baum--Connes conjecture for Lie groupoids \cite[\S II.10.$\alpha$]{Connes94}, a geometric cycle for a Lie groupoid $\G$ is given by proper, cocompact actions of $\G$
on $f\colon N\to M$, together with a $K$-theory class in $K_i(C^*(\G\ltimes T_\G N))$, where $T_\G N$ is the tangent bundle along the fibers of $f$. Since $\G\ltimes T_\G N\rightrightarrows T_\G N$ is also a proper Lie groupoid, such $K_0$-classes are essentially given by representations of this groupoid. We hope that our theory can help to study this geometric $K$-group
for Lie groupoids, and by this contribute to index theory.
\end{exa}

\begin{exa}
Another example of a proper Lie groupoid is constructed in \cite{GL13}. Here one starts with a regular Riemannian foliation $(M,\mathcal{F},g)$ and considers the action of the holonomy 
groupoid ${\rm Hol}(\mathcal{F})$ on the normal bundle $N_\mathcal{F}$ to the foliation. Since the foliation is Riemannian, this is given by an injective morphism of groupoids 
\[
{\rm Hol}(\mathcal{F})\to O(N_\mathcal{F}),
\]
where $O(N_\mathcal{F})\rightrightarrows M$ is the groupoid of isometric linear maps of the Riemannian vector bundle $N_\mathcal{F}\to M$.
Taking the closure in the groupoid on the right hand side produces a proper Lie groupoid over $M$. By the result in \cite{PflaumPosthumaTang14} this proper Lie groupoid induces a
singular Riemannian foliation with partition given by the closure of the leaves of the original foliation. This is the groupoid proof of Molino's theorem that the leaf closure of a Riemannian
foliation defines a singular Riemannian foliation.

In this context, the resolution of this groupoid (or rather the associated full resolution to a manifold with corners) was used in \cite{BKR10} to give a proof of the index theorem for 
the basic Dirac operator on a Riemannian foliation.

\end{exa}

\subsection{The Dixmier--Douady class}
In this final subsection we show how to bring the classification of regular Lie groupoids of \cite{Moerdijk02} into play. Let $\G\rightrightarrows M$ be a proper Lie groupoid. Its 
desingularization $\widetilde{\G}\rightrightarrows \widetilde{M}$ is a proper regular Lie groupoid, and, by the main result of \cite{Moerdijk02} defines an extension
\[
1\longrightarrow K\longrightarrow \widetilde{\G}\longrightarrow E\longrightarrow 1
\]
of a proper foliation groupoid $E$ by a bundle of compact Lie groups $K$ given by the connected components of the isotropy groups of $\tilde{\G}$ . 
If we assume $K$ to be a bundle of {\em abelian} Lie groups, $E$ acts on $\widetilde{\G}$ through the extension above.
The quotient $\widetilde{M}\slash E$ has the structure of an orbifold and the bundle $K$ descends to the quotient. In the case that $K$ has fibers $\mathbb{T}$ and is central in $\widetilde{\G}$, this is exactly a  
$\mathbb{T}$-gerbe with trivial band over the orbifold $\widetilde{M}\slash E$. 
In the general case of an bundle of abelian groups, Moerdijk in \cite{Moerdijk02} associates to such extensions a cohomology class
\[
\delta(\widetilde{\G})\in H^2(\widetilde{M}\slash E;\underline{K}),
\]
where $\underline{K}$ is the sheaf of smooth sections of the bundle $K$. This class is invariant under Morita equivalence and therefore we conclude:
\begin{thm}
Let $\G\rightrightarrows M$ be a proper Lie groupoid with abelian stabilizer groups on the principal stratum. Then $\G$ defines in a canonical way a cohomology class 
\[
\delta(\G)\in H^2(\widetilde{M}\slash E,\underline{K})
\]
\end{thm}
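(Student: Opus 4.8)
The plan is to define $\delta(\G)$ as the Dixmier--Douady class $\delta(\widetilde{\G})$ of a desingularization, and then to verify that this assignment lands in $H^2(\widetilde{M}/E;\underline{K})$ and is independent of all choices. First I would fix a desingularization $\pi\colon\widetilde{\G}\rrar\widetilde{M}$ of $\G$, which by \autoref{thm:action-resolution} is a proper \emph{regular} Lie groupoid. Moerdijk's structure theorem \cite{Moerdijk02} then supplies the extension $1\rar K\rar\widetilde{\G}\rar E\rar 1$ with $E$ a proper foliation groupoid and $K$ the bundle of identity components of the isotropy groups of $\widetilde{\G}$. Provided $K$ is a bundle of \emph{abelian} groups, Moerdijk associates the class $\delta(\widetilde{\G})\in H^2(\widetilde{M}/E;\underline{K})$, and I would set $\delta(\G):=\delta(\widetilde{\G})$. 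Two things then remain: that the hypothesis forces $K$ to be abelian, and that the resulting class is canonical.

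For the abelianness of $K$, I would use that a desingularization is an \emph{action} resolution, so $\widetilde{\G}\simeq\G\times_M\widetilde{M}$ and $\widetilde{\G}$ is isomorphic to $\G$ away from the exceptional divisors. In particular, over the open, dense, connected locus of $\widetilde{M}$ lying above the principal stratum $S^m$ (which is never blown up, since it is the minimal-codimension stratum), the isotropy groups of $\widetilde{\G}$ coincide with the stabilizers $\G_x$, $x\in S^m$, whose identity components are abelian by hypothesis. Because $\widetilde{\G}$ is proper and regular, $K$ is a locally trivial bundle of compact connected Lie groups over the connected base $\widetilde{M}$; its fibers are therefore mutually isomorphic, and being compact, connected and abelian over a dense open set they are tori everywhere. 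Hence $K$ is a bundle of abelian groups and Moerdijk's class is defined.

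Canonicity is where Morita invariance enters. A single $\G$ admits many desingularizations, differing by the choices of metric, tubular neighbourhoods, and the ordering in which strata of the same maximal codimension are blown up. Since the blow-up along a closed saturated stratum is independent of the chosen tubular neighbourhood, and since by \autoref{thm:blow-up-morita} desingularizations of Morita equivalent groupoids are Morita equivalent, any two desingularizations of $\G$ are Morita equivalent proper regular Lie groupoids. Such an equivalence induces a homeomorphism of the orbifold quotients $\widetilde{M}/E$ under which the sheaves $\underline{K}$ correspond, and Moerdijk proves \cite{Moerdijk02} that $\delta$ is invariant under equivalences of regular groupoid extensions. Therefore $\delta(\widetilde{\G})$ depends only on the Morita class of $\G$, so $\delta(\G)$ is well defined and, being assembled from Morita invariants, is an invariant of the underlying differentiable stack.

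I expect the delicate steps to be twofold. The first is the propagation of abelianness from the principal stratum to all of $\widetilde{M}$: this rests on knowing that $K$ is genuinely a locally trivial bundle of compact connected Lie groups, so that one abelian fiber over a connected base forces abelianness everywhere, which in turn relies on the local normal forms for proper groupoids in \cite{HoyoFernandes14,PflaumPosthumaTang14}. The second, and probably the main obstacle, is to ensure that the Morita equivalence produced by \autoref{thm:blow-up-morita} is compatible not merely at the level of orbit spaces but with the entire extension $1\rar K\rar\widetilde{\G}\rar E\rar 1$, so that Moerdijk's cohomological invariant is transported faithfully; this is what ultimately guarantees that $\delta(\G)$ is genuinely independent of the chosen resolution.
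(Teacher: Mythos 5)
Your proposal follows the same route as the paper: desingularize, apply Moerdijk's structure theorem to the resulting proper regular groupoid to obtain the extension $1\to K\to\widetilde{\G}\to E\to 1$ and the class $\delta(\widetilde{\G})$, and invoke Morita invariance of desingularizations together with Morita invariance of Moerdijk's class to get canonicity. You in fact supply two details the paper leaves implicit — that abelianness of the stabilizers on the principal stratum propagates to all of $K$ via local triviality of the bundle of identity components over the connected base, and that the Morita equivalence must be compatible with the whole extension — and both are handled correctly.
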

In case the band of the gerbe $K$ is trivial with fiber $\mathbb{T}$, one can use the exponential sequence to get the cohomology class in $H^3(M\slash E,\mathbb{Z})$, the so-called Dixmier--Douady class. It would be interesting to study proper Lie groupoids whose desingularization is a central extension of a proper \'etale groupoid, which is related to \cite{pmct2}. For nonabelian stabilizers, when the band of the gerbe is trivial, there is a natural class in $H^2(\widetilde{M}/E, Z(K))$, where $Z(K)$ is the center subgroup of $K$.
 It is an interesting open question what properties of the original groupoid $\G$ this cohomology class exactly measures.

%END OF TEX FILE
%\input{Decomposition}
%
\nocite{*} %this makes all items appear, cited or not
\bibliographystyle{hyperamsplain-nodash}
\bibliography{Resolutions}
\end{document}